\newtheorem{theorem}{Theorem}[section]
\newtheorem*{theorem*}{Theorem}
\newtheorem{lemma}[theorem]{Lemma}
\newtheorem{proposition}[theorem]{Proposition}
\newtheorem{corollary}[theorem]{Corollary}
\newtheorem*{conjecture*}{Conjecture}
\newtheorem{question}[theorem]{Question}
\theoremstyle{remark}
\newtheorem{remark}[theorem]{Remark}
\theoremstyle{definition}
\newcommand{\ie}{{\em i.e.~}\ }
\newcommand{\opname}[1]{\operatorname{\mathsf{#1}}}
\renewcommand{\mod}{\opname{mod}\nolimits}
\newcommand{\proj}{\opname{proj}\nolimits}
\newcommand{\Mod}{\opname{Mod}\nolimits}
\newcommand{\add}{\opname{add}\nolimits}
\newcommand{\op}{^{op}}
\renewcommand{\Im}{\opname{Im}\nolimits}
\newcommand{\fpr}{\opname{pr}\nolimits}
\newcommand{\cok}{\opname{cok}\nolimits}
\newcommand{\thick}{\opname{thick}\nolimits}
\newcommand{\per}{\opname{per}\nolimits}
\newcommand{\id}{\mathrm{id}}
\newcommand{\Hom}{\opname{Hom}}
\newcommand{\End}{\opname{End}}
\newcommand{\Ext}{\opname{Ext}}
\newcommand{\ca}{{\mathcal A}}
\newcommand{\cc}{{\mathcal C}}
\newcommand{\cd}{{\mathcal D}}
\newcommand{\cf}{{\mathcal F}}
\newcommand{\ch}{{\mathcal H}}
\newcommand{\ci}{{\mathcal I}}
\newcommand{\cm}{{\mathcal M}}
\newcommand{\cn}{{\mathcal N}}
\newcommand{\ct}{{\mathcal T}}
\newcommand{\del}{\partial}
\newenvironment{psmallmatrix}
  {\left(\begin{smallmatrix}}
  {\end{smallmatrix}\right)}
\numberwithin{equation}{section}
\begin{document}

\title[From finitely presented objects to 2-term complexes]{From objects finitely presented by a rigid object in a triangulated category to 2-term complexes}

\author{Dong Yang}
\address{Dong Yang, School of Mathematics, Nanjing University, Nanjing 210093, P. R. China}
\email{yangdong@nju.edu.cn}
\thanks{The author acknowledges financial support by the National Natural Science Foundation of China No.12031007. He thanks Yann Palu for very helpful conversations, especially for pointing out the two references \cite{FGPPP23} and \cite{ChenXF23}. He thanks Bernhard Keller for pointing out the references~\cite{Krause20,HuaKeller24}. He thanks Osamu Iyama for writing the appendix.}
\dedicatory{Dedicated to the memory of Xianke Zhang}

\address{Osamu Iyama, Graduate School of Mathematical Sciences, The University of Tokyo, 3-8-1 Komaba
Meguro-ku Tokyo 153-8914, Japan}
\email{iyama@ms.u-tokyo.ac.jp}

\date{\today}

\begin{abstract} For a rigid object $M$ in an algebraic triangulated category $\ct$, a functor $\fpr(M)\to\ch^{[-1,0]}(\proj A)$ is constructed, which essentially takes an object to its `presentation', where $\fpr(M)$ is the full subcategory of $\ct$ of objects finitely presented by $M$, $A$ is the endomorphism algebra of $M$ and $\ch^{[-1,0]}(\proj A)$ is the homotopy category of complexes of finitely projective $A$-modules concentrated in degrees $-1$ and $0$. This functor is shown to be full and dense and its kernel is described. It detects isomorphisms, indecomposability and extriangles. In the Hom-finite case, it induces a bijection from the set of isomorphism classes of basic relative cluster-tilting objects of $\fpr(M)$ to that of basic silting complexs of $\ch^{[-1,0]}(\proj A)$, which commutes with mutations.  These results are applied to cluster categories of self-injective quivers with potential to recover a theorem of Mizuno on the endomorphism algebras of certain 2-term silting complexes. As an interesting consequence of the main results, if $\ct$ is a 2-Calabi--Yau triangulated category and $M$ is a cluster-tilting object such that $A$ is self-injective, then $\mathbb{P}$ is an equivalence, in particular, $\ch^{[-1,0]}(\proj A)$ admits a triangle structure. In the appendix by Iyama it is shown that for a finite-dimensional algebra $A$, if $\ch^{[-1,0]}(\proj A)$ admits a triangle structure, then $A$ is necessarily self-injective.\\
{\bf MSC 2020 classification:} 16E35, 18G80.\\
{\bf Keywords:} rigid object, relative cluster-tilting object, 2-term silting complex, mutation, self-injective quiver with potential. 
\end{abstract}

\maketitle

\tableofcontents

\section{Introduction}

Let $k$ be a commutative ring and $\ct$ be an algebraic triangulated $k$-category with shift functor $\Sigma$. Let $M$ be a rigid object of $\ct$, \ie the $k$-module $\Hom_\ct(M,\Sigma M)$ of morphisms from $M$ to $\Sigma M$ is trivial. Consider the full subcategory $\fpr(M)$ of $\ct$ consisting of objects finitely presented by $M$, that is, objects $X$ with triangles $\xymatrix@C=0.7pc{M'\ar[r] & M''\ar[r] & X\ar[r] & \Sigma M'}$, where $M'$ and $M''$ are direct summands of finite direct sums of copies of $M$. Let $A=\End_\ct(M)$ be the endomorphism algebra of $M$. In the literature great attention has been paid to the functor 
\[
\Hom_\ct(M,?)\colon \fpr(M)\longrightarrow\mod A
\]
which relates cluster-tilting theory in $\fpr(M)$ and support $\tau$-tilting theory in $\mod A$, the category of finitely presented $A$-modules, see for example \cite{BuanMarshReiten07,KellerReiten07,IyamaYoshino08,KoenigZhu08,AdachiIyamaReiten14,IyamaJorgensenYang14,YangZhu19,FuGengLiu19,ZhouZhu20}. 

In this paper we construct by hand a $k$-linear functor through which the above functor naturally factors 
\[
\mathbb{P}\colon \fpr(M)\longrightarrow\ch^{[-1,0]}(\proj A)
\]
which essentially takes an object of $\fpr(M)$ to its `presentation', where $\ch^{[-1,0]}(\proj A)$ is the full subcategory of the bounded homotopy category $\ch^b(\proj A)$ of finitely generated projective $A$-modules consisting of complexes which are homotopy equivalent to complexes concentrated in degrees $-1$ and $0$. The precise construction is given in Section~\ref{s:from-finitely-presented-objects-to-two-term-complexes}. Both $\fpr(M)$ and $\ch^{[-1,0]}(\proj A)$ carry extriangle structures in the sense of \cite{NakaokaPalu19}:
\begin{itemize}
\item[-] a triangle $\xymatrix@C=0.7pc{X\ar[r] & Y\ar[r] & Z\ar[r]^(0.4)w & \Sigma X}$ with $X,Y,Z\in\fpr(M)$ is an extriangle of $\fpr(M)$ if $w$ factors through  a finite direct sum of copies of $\Sigma M$;
\item[-] a triangle $\xymatrix@C=0.7pc{X\ar[r] & Y\ar[r] & Z\ar[r] & \Sigma X}$ with $X,Y,Z\in\ch^{[-1,0]}(\proj A)$ is an extriangle of $\ch^{[-1,0]}(\proj A)$.
\end{itemize}
Our main result is

\begin{theorem}[{Theorem~\ref{thm:the-one-object-case} and Proposition~\ref{prop:P-detects-extriangles}}]
\label{thm:main-1}
\mbox{}

\begin{itemize}
\item[(a)] The functor $\mathbb{P}$ is full, dense and its morphism kernel $\ci$ consists of morphisms which factors through a morphism $\Sigma M_1\to M_2$ with $M_1,M_2\in\add(M)$. In particular, it induces an equivalence $\frac{\fpr(M)}{\ci}\to \ch^{[-1,0]}(\proj A)$, and $\mathbb{P}$ is an equivalence if and only if $\Hom_\ct(M,\Sigma^{-1}M)=0$.
\item[(b)] $\mathbb{P}$ detects isomorphisms, detects indecomposability and induces a bijection between the isomorphism classes of objects of $\fpr(M)$ and those of $\ch^{[-1,0]}(\proj A)$.
\item[(c)] $\mathbb{P}$ detects extriangles. Precisely, if $\xymatrix@=0.7pc{Y\ar[r]^u & Z\ar[r]^v & X\ar[r]^(0.4)w &\Sigma Y}$ is an extriangle in $\fpr(M)$, then its image $\xymatrix@=1.2pc{\mathbb{P}(Y)\ar[r]^{\mathbb{P}(u)} & \mathbb{P}(Z)\ar[r]^{\mathbb{P}(v)} & \mathbb{P}(X)\ar[r]^{\mathbb{P}(w)} &\Sigma \mathbb{P}(Y)}$ is an extriangle in $\ch^{[-1,0]}(\proj A)$; up to isomorphism every extriangle in $\ch^{[-1,0]}(\proj A)$ can be uniquely obtained in this way. 
\item[(d)] $\mathbb{P}$ induces a bijection between the set of isomorphism classes of  relative cluster-tilting objects of $\fpr(M)$ and the set of isomorphism classes of 2-term silting objects of $\ch^b(\proj A)$. 
\item[(e)] Assume further that $k$ is a field and $\fpr(M)$ is Hom-finite over $k$. Then the bijection in (c) restricts to a bijection between the set of isomorphism classes of basic relative cluster-tilting objects of $\fpr(M)$ and the set of isomorphism classes of basic 2-term silting objects of $\ch^b(\proj A)$, which commutes with mutations.
\end{itemize}
\end{theorem}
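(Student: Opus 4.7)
The plan is to work directly from the explicit construction of $\mathbb{P}$, which sends an object $X$ with presentation triangle $M_1 \xrightarrow{f} M_0 \to X \to \Sigma M_1$ (with $M_i \in \add(M)$) to the 2-term complex $\Hom_\ct(M,f)\colon \Hom_\ct(M,M_1) \to \Hom_\ct(M,M_0)$ in $\ch^{[-1,0]}(\proj A)$. For (a), density is immediate from Yoneda: any complex $P_1 \to P_0$ of finitely generated projective $A$-modules lifts to a morphism $M_1 \to M_0$ in $\add(M)$, whose cone yields a preimage in $\fpr(M)$. To prove fullness I would lift a chain map $(a_1,a_0)\colon \mathbb{P}(X) \to \mathbb{P}(Y)$ via Yoneda to morphisms in $\add(M)$ compatible with the presentation morphisms, and then apply axiom TR3 to produce a morphism $X \to Y$. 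A direct diagram chase on the presentation triangles identifies the kernel of $\mathbb{P}$ as the ideal $\ci$ of morphisms factoring through some $\Sigma M' \to M''$ with $M',M'' \in \add(M)$. Because two such factorizations compose across a Hom-group $\Hom_\ct(M'', \Sigma\tilde M') = 0$ (by rigidity of $M$), one obtains the crucial property $\ci^2 = 0$. The equivalence criterion then reformulates via $\Sigma$-adjunction: $\ci$ vanishes iff $\Hom_\ct(M, \Sigma^{-1} M) = 0$.

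For (b), the nilpotence $\ci^2 = 0$ is the key lever. If $\mathbb{P}(h)$ is invertible, fullness yields a lift $g$ of $\mathbb{P}(h)^{-1}$; writing $gh = \id_X + e$ with $e \in \ci$, the identity $(\id_X - e)(gh) = \id_X$ forces $gh$ to be invertible, and symmetrically for $hg$, making $h$ an isomorphism. Indecomposability detection I would handle by lifting idempotents: any idempotent in $\End(\mathbb{P}(X))$ lifts via fullness to $\pi \in \End(X)$ with $\pi^2 - \pi \in \ci$, and the standard correction modulo a square-zero ideal produces a genuine idempotent. The bijection on isomorphism classes then combines this with density.

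For (c), the image of an extriangle $Y \xrightarrow{u} Z \xrightarrow{v} X \xrightarrow{w} \Sigma Y$ (with $w$ factoring through a sum of copies of $\Sigma M$) yields via additivity of $\mathbb{P}$ a candidate extriangle in $\ch^{[-1,0]}(\proj A)$; I would verify this using a horseshoe-style construction of a presentation of $Z$ from presentations of $Y$ and $X$ so that the resulting sequence of 2-term complexes is termwise split short exact and hence an extriangle. Conversely, given any extriangle of $\ch^{[-1,0]}(\proj A)$, fullness lifts two consecutive morphisms to $\ct$, completion to a triangle produces the third vertex, and the hypothesis that source and target are finitely presented by $M$ forces the connecting map to factor through $\Sigma M$. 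Uniqueness up to isomorphism follows from iso-detection in (b) applied to a morphism of triangles.

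For (d) and (e), I would use the bijection on iso classes to translate cluster-tilting conditions into silting conditions: relative rigidity of $T$ corresponds by (c) to $\Hom_{\ch^b(\proj A)}(\mathbb{P}(T), \Sigma\mathbb{P}(T)) = 0$, and the generation axioms match because $\mathbb{P}$ is essentially surjective onto $\ch^{[-1,0]}(\proj A)$. Under the Hom-finite hypothesis of (e), Krull--Schmidt and the bijection restrict to basic objects. Mutation on both sides is governed by the minimal left/right approximation triangles attached to the almost-complete object; part (c) ensures $\mathbb{P}$ sends such approximation extriangles to approximation extriangles, so mutation is preserved. The hardest point will be verifying that $\mathbb{P}$ sends \emph{minimal} approximations to \emph{minimal} approximations: this requires careful tracking of how $\mathbb{P}$ interacts with the radicals of the endomorphism rings, and ultimately reduces to the fact that the equivalence $\fpr(M)/\ci \cong \ch^{[-1,0]}(\proj A)$ from (a) reflects the Krull--Schmidt structure faithfully since $\ci$ lies in the Jacobson radical.
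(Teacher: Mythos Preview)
Your overall strategy for parts (b)--(e) is close to the paper's, but there is a genuine gap at the very foundation: your construction of $\mathbb{P}$ on morphisms, and your fullness argument, both rely on axiom TR3, and this does not suffice. The difficulty is that TR3 only guarantees the \emph{existence} of a filler $f\colon X\to Y$ for a given chain map $(a_1,a_0)$; it does not guarantee that $\mathbb{P}(f)$ is homotopic to $(a_1,a_0)$. Equivalently, if one tries to define $\mathbb{P}(f)$ as ``any chain map completing $f$ to a morphism of presentation triangles'', one must show that two such chain maps are always homotopic, i.e.\ that any presentation of the zero morphism is null-homotopic. A direct attempt runs into the non-functoriality of cones: from $b_Y f^0=0$ one gets $f^0=a_Y h$, but then $a_Y(f^{-1}-h a_X)=0$ only forces $f^{-1}-h a_X$ to factor through $\Sigma^{-1}Y$, and there is no evident reason for this factorisation to vanish or to be absorbed into a different choice of $h$. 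Rigidity of $M$ alone does not obviously close this.

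The paper resolves this by using the \emph{algebraic} hypothesis on $\ct$ in an essential way: writing $\ct=\underline{\cf}$ for a Frobenius category $\cf$, it defines $\mathbb{P}(f)$ via \emph{liftable} presentations, namely those arising from morphisms of conflations in $\cf$ (Lemma~\ref{lem:lifting-morphisms}, Proposition~\ref{prop:from-fpr-to-2term}). In $\cf$ the relevant maps are honest kernels and cokernels, so uniqueness up to homotopy follows from a clean diagram chase. Fullness is then proved by lifting the given chain map to $\cf$ and completing to a morphism of conflations there, so that the resulting $f$ has the original chain map as a liftable presentation by construction. You never invoke the Frobenius model, so your argument as written does not establish that $\mathbb{P}$ is a functor, nor that it is full.

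One smaller point on (e): your worry about $\mathbb{P}$ preserving \emph{minimal} approximations is unnecessary. The paper sidesteps this entirely by using the characterisation of mutation via the uniqueness of the indecomposable complement (Lemma~\ref{lem:mutation-of-relative-cluster-tilting}): once you know $\mathbb{P}$ sends the exchange extriangle to an extriangle and preserves indecomposability and basic silting, the image must be the mutation on the other side. No radical comparison is needed.
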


Theorem~\ref{thm:main-1}(a)(b) generalises \cite[Proposition A.5]{BruestleYang13}, (d) generalises \cite[Proposition A.3]{BruestleYang13} and (e) generalises \cite[Proposition A.6]{BruestleYang13}. The statements (a)(b)(c)(d) are stated more generally for rigid subcategories, see Proposition~\ref{prop:from-fpr-to-2term}, Proposition~\ref{prop:P-detects-extriangles}, Theorem~\ref{thm:the-kernel}, Corollary~\ref{cor:when-P-is-an-equivalence} and Theorem~\ref{thm:P-induces-bijection-between-cto}. We remark that (d) is compatible with the bijection established in \cite{AdachiIyamaReiten14,IyamaJorgensenYang14,YangZhu19,FuGengLiu19,ZhouZhu20} from the set of relative cluster-tilting subcategories to the set of support $\tau$-tilting pairs, see Section~\ref{ss:relation-to-support-tau-tilting-theory}.

\medskip
One particular interesting situation is when $\ct$ is a 2-Calabi--Yau triangulated category and $M$ is a cluster-tilting object.
In this case, an object of $\ct$ is relative cluster-tilting if and only if it is cluster-tilting.

\begin{theorem}
[{Theorem~\ref{thm:the-one-object-case-cto}}]
\label{thm:main-2}
Assume further that $k$ is a field, $\ct$ is a 2-Calabi--Yau triangulated $k$-category and $M$ is a cluster-tilting object.
\begin{itemize}
\item[(a)] The functor $\mathbb{P}$ is an equivalence if and only if $A$ is self-injective. 
\item[(b)] Assume that $A$ is self-injective. Then the bijection in Theorem~\ref{thm:main-1}(d) restricts to a bijection from (i) to (ii) of the following sets:
\begin{itemize}
\item[(i)] the set of isomorphism classes of basic cluster-tilting objects of $\ct$ with self-injective endomorphism algebras,
\item[(ii)] the set of isomorphism classes of basic 2-term tilting objects in $\ch^b(\proj A)$.
\end{itemize}
As a consequence, a basic 2-term silting object $N$ of $\ch^b(\proj A)$ is tilting if and only if its endomorphism algebra is self-injective.
\end{itemize}
\end{theorem}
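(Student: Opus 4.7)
For part (a), I apply Theorem~\ref{thm:main-1}(a) to reduce to showing that $\Hom_\ct(M,\Sigma^{-1}M)=0$ is equivalent to $A$ being self-injective. Since $M$ is cluster-tilting in the $2$-Calabi--Yau category $\ct$, the rigidity characterisation $\add M=\{X\in\ct:\Hom_\ct(X,\Sigma M)=0\}$ shows that
\[
\Hom_\ct(M,\Sigma^{-1}M)=\Hom_\ct(\Sigma^2 M,\Sigma M)=0 \iff \Sigma^2 M\in\add M.
\]
The functor $F=\Hom_\ct(M,-)$ restricts to an equivalence $\add M\xrightarrow{\sim}\proj A$, $M_i\mapsto P_i$; by $2$-Calabi--Yau Serre duality, $F(\Sigma^2 M_i)\cong D\Hom_\ct(M_i,M)=I_i$, the indecomposable injective $A$-module. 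Using rigidity of $M$ together with Krull--Schmidt (valid under the Hom-finite assumption), one shows $\Sigma^2 M_i\in\add M$ iff $I_i$ is projective, so varying $i$ yields $\Sigma^2 M\in\add M$ iff $A$ is self-injective.

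For part (b), assume $A$ is self-injective, so $\mathbb{P}\colon\ct\xrightarrow{\sim}\ch^{[-1,0]}(\proj A)$ is an equivalence by (a). Let $N\in\ct$ be basic cluster-tilting, set $B:=\End_\ct(N)$, and let $P:=\mathbb{P}(N)$ be the corresponding basic $2$-term silting complex from Theorem~\ref{thm:main-1}(d); note $\End_{\ch^b(\proj A)}(P)\cong B$ by fullness and faithfulness of $\mathbb{P}$. The claim reduces to: $P$ is tilting iff $B$ is self-injective. Since $P$ is $2$-term silting, the extra tilting requirement is $\Hom_{\ch^b(\proj A)}(P,P[-1])=0$. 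The plan is to establish a natural isomorphism
\[
\Hom_{\ch^b(\proj A)}(\mathbb{P}(X),\mathbb{P}(Y)[-1])\cong \Hom_\ct(X,\Sigma^{-1}Y),\qquad X,Y\in\ct,
\]
by writing $\mathbb{P}(X)=(P_X^{-1}\to P_X^0)$ from a presentation $M'\to M''\to X\to\Sigma M'$ (similarly for $Y$) and computing chain maps modulo homotopy, using the self-injectivity of $A$ (equivalently, $\Hom_\ct(M,\Sigma^{-1}M)=0$) to annihilate the obstruction terms. Applying this with $X=Y=N$, the tilting condition $\Hom_{\ch^b(\proj A)}(P,P[-1])=0$ becomes $\Hom_\ct(N,\Sigma^{-1}N)=0$, which by part (a) applied to $N$ (a cluster-tilting object of the $2$-Calabi--Yau category $\ct$) is equivalent to $B$ being self-injective. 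The final consequence then follows, since every basic $2$-term silting complex arises as $\mathbb{P}(N^*)$ for a unique basic cluster-tilting $N^*\in\ct$ with endomorphism algebra equal to that of the complex.

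The main obstacle is the $\Hom$-space identification, because the shift $[1]$ on $\ch^b(\proj A)$ does not preserve the subcategory $\ch^{[-1,0]}(\proj A)$: one cannot simply transport the shift $\Sigma$ of $\ct$ through $\mathbb{P}$. A careful chain-level computation is required, possibly phrased via an extriangulated-categorical interpretation of $\Hom(-,-[-1])$ as a second-extension group on $\ch^{[-1,0]}(\proj A)$ matched with the corresponding extriangle-theoretic second extension in $\fpr(M)=\ct$.
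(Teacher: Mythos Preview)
Your argument for part~(a) is essentially the paper's: reduce via Theorem~\ref{thm:main-1}(a) to the equivalence $\Hom_\ct(M,\Sigma^{-1}M)=0\Leftrightarrow A$ self-injective, which the paper obtains by citing \cite[Proposition~3.6]{IyamaOppermann13}; you instead sketch a direct proof of that proposition. This is fine.

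For part~(b) there is a genuine gap. The isomorphism you aim for,
\[
\Hom_{\ch^b(\proj A)}(\mathbb{P}(X),\mathbb{P}(Y)[-1])\;\cong\;\Hom_\ct(X,\Sigma^{-1}Y),
\]
is \emph{false} even when $A$ is self-injective. The paper's Lemma~\ref{lem:P-on-negative-extensions} shows only that the natural map $\rho$ (in that direction) is \emph{surjective}, with kernel consisting of morphisms factoring through some $\Sigma M_1\to\Sigma^{-1}M_2$. This kernel need not vanish: take $X=\Sigma M$ and $Y=M'$; then $\mathbb{P}(\Sigma M)$ is concentrated in degree $-1$ and $\mathbb{P}(M')[-1]$ in degree $1$, so the right-hand side is zero, while the left-hand side $\Hom_\ct(\Sigma M,\Sigma^{-1}M')\cong D\Hom_\ct(M',\Sigma^4 M)$ is typically nonzero (indeed $\Sigma^2 M\in\add M$ when $A$ is self-injective, so $\Sigma^4 M\in\add M$). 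Thus your ``careful chain-level computation'' cannot succeed as stated.

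The surjectivity of $\rho$ does give the implication ``$\End_\ct(N)$ self-injective $\Rightarrow$ $\mathbb{P}(N)$ tilting'' exactly as you outline (and as the paper does). For the converse, however, you need a different idea. The paper invokes \cite[Theorem~2.1]{AlnofayeeRickard13}: self-injectivity is invariant under derived equivalence, so if $\mathbb{P}(N)$ is a tilting complex over the self-injective algebra $A$, then $\End(\mathbb{P}(N))\cong\End_\ct(N)$ is automatically self-injective. This external input replaces the missing injectivity of $\rho$.
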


As a consequence of Theorem~\ref{thm:main-2}(a), if in addition $A$ is self-injective, then the category $\ch^{[-1,0]}(\proj A)$ admits a triangle structure.
Conversely, if $A$ is any finite-dimensional algebra such that $\ch^{[-1,0]}(\proj A)$ admits a triangle structure, then $A$ is necessarily self-injective, see Appendix~\ref{a:triangle-structures-on-2-term-complexes} written by Osamu Iyama.

\smallskip
Main examples of $2$-Calabi--Yau triangulated categories with cluster-tilting objects are cluster categories of quivers with potential \cite{Amiot09,KellerYang11}. Let $(Q,W)$ be a quiver with potential and $\ct=\cc_{(Q,W)}$ be its cluster category. Then $\ct$ has a cluster-tilting object $M$ whose endomorphism algebra is the complete Jacobian algebra $J=\widehat{J}(Q,W)$ of $(Q,W)$. Applying Theorem~\ref{thm:main-2}(a) and Theorem~\ref{thm:main-1}(e) we obtain

\begin{theorem}[{Theorem~\ref{thm:compatibility-with-mutations}}]
\label{thm:main-3} 
Assume that $J$ is self-injective.
Let  $\underline{i}=(i_1,\ldots,i_s)$ be a sequence of vertices of $Q$ such that $i_{r}$ does not lie on a loop or 2-cycle in the   quiver of the iteratedly mutated quiver with potential $\mu_{i_r-1}\cdots\mu_{i_1}(Q,W)$ for all $1\leq r\leq s-1$ and let $\underline{\epsilon}=(\epsilon_1,\ldots,\epsilon_s)$ be a sequence of elements of $\{L,R\}$ such that the iteratedly mutated silting object $J_{\underline{i},\underline{\epsilon}}=\mu_{i_r}^{\epsilon_r}\cdots\mu_{i_1}^{\epsilon_1}(J)$ belongs to $\ch^{[-1,0]}(\proj J)$ for any $r\in\{1,\ldots,s\}$. Then 
\[
\End_{\ch^b(\proj J)}(J_{\underline{i},\underline{\epsilon}})\cong \widehat{J}(\mu_{i_s}\cdots\mu_{i_1}(Q,W)).
\]
\end{theorem}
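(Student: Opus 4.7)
The plan is to transport the silting-mutation computation to the cluster category $\cc_{(Q,W)}$ via the equivalence $\mathbb{P}$ furnished by Theorem~\ref{thm:main-2}(a), and then apply the Keller--Yang mutation theorem for quivers with potential.

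Since $J$ is self-injective, Theorem~\ref{thm:main-2}(a) yields that $\mathbb{P}\colon\cc_{(Q,W)}\to\ch^{[-1,0]}(\proj J)$ is an equivalence, and Theorem~\ref{thm:main-1}(e) then gives a mutation-compatible bijection between basic cluster-tilting objects of $\cc_{(Q,W)}$ and basic 2-term silting objects of $\ch^b(\proj J)$. For $0\le r\le s$ set
\[
J^{(r)}:=\mu_{i_r}^{\epsilon_r}\cdots\mu_{i_1}^{\epsilon_1}(J),
\]
which lies in $\ch^{[-1,0]}(\proj J)$ by hypothesis, and let $M^{(r)}\in\cc_{(Q,W)}$ be the basic cluster-tilting object with $\mathbb{P}(M^{(r)})\cong J^{(r)}$. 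Then $M^{(0)}=M$, and the compatibility with mutations forces $M^{(r)}$ to be the (unique) cluster-tilting mutation $\mu_{i_r}(M^{(r-1)})$ at the indecomposable summand of $M^{(r-1)}$ corresponding to vertex $i_r$. Since $\mathbb{P}$ is fully faithful,
\[
\End_{\cc_{(Q,W)}}(M^{(r)})\;\cong\;\End_{\ch^b(\proj J)}(J^{(r)}).
\]

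Next I would prove by induction on $r$ the isomorphism $\End_{\cc_{(Q,W)}}(M^{(r)})\cong \widehat{J}(\mu_{i_r}\cdots\mu_{i_1}(Q,W))$. The base case $r=0$ is the defining identification $\End(M)\cong J=\widehat{J}(Q,W)$. For the inductive step, the inductive hypothesis identifies the Gabriel quiver of $\End(M^{(r-1)})$ with that of $\mu_{i_{r-1}}\cdots\mu_{i_1}(Q,W)$, so the assumption that $i_r$ does not lie on a loop or 2-cycle in that quiver is precisely the hypothesis of the Keller--Yang mutation theorem; applying it to the Jacobi-finite quiver with potential $\mu_{i_{r-1}}\cdots\mu_{i_1}(Q,W)$ yields
\[
\End_{\cc_{(Q,W)}}\!\bigl(\mu_{i_r}(M^{(r-1)})\bigr)\;\cong\;\widehat{J}\bigl(\mu_{i_r}(\mu_{i_{r-1}}\cdots\mu_{i_1}(Q,W))\bigr)\;=\;\widehat{J}(\mu_{i_r}\cdots\mu_{i_1}(Q,W)).
\]
Taking $r=s$ and combining with the previous displayed isomorphism gives the theorem.

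The main technical obstacle is reconciling the two-sided silting mutations ($L$ and $R$) with the unique cluster-tilting mutation in the 2-Calabi--Yau category $\cc_{(Q,W)}$. Via the equivalence $\mathbb{P}$, the category $\ch^{[-1,0]}(\proj J)$ inherits a 2-Calabi--Yau triangle structure (as remarked after Theorem~\ref{thm:main-2}), and in such a category the two 2-term silting mutations at a summand --- whenever both remain 2-term --- coincide and realise the cluster-tilting mutation on the $\cc_{(Q,W)}$-side. This is what makes the resulting endomorphism algebra depend only on $\underline{i}$ and not on $\underline{\epsilon}$, and once this observation is in place the rest of the argument is bookkeeping together with the black-box invocation of Keller--Yang.
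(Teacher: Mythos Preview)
Your proposal is correct and follows the same strategy as the paper: transport the problem to $\cc_{(Q,W)}$ via the equivalence $\mathbb{P}$ (Theorem~\ref{thm:the-one-object-case-cto}), use that $\mathbb{P}$ commutes with mutations, and then invoke the Keller--Yang result, which the paper packages as Lemma~\ref{lem:compatibility-of-mutation-of-QP-and-cto}. One small clarification to your last paragraph: the point is not that both silting mutations might remain 2-term and then coincide, but rather that exactly one of $\mu_{i_r}^L$ or $\mu_{i_r}^R$ stays in $\ch^{[-1,0]}(\proj J)$ (so $\epsilon_r$ is forced, as the paper remarks after Theorem~\ref{thm:compatibility-with-mutations}), and via $\mathbb{P}$ that one corresponds to the unique cluster-tilting mutation in the 2-Calabi--Yau category $\cc_{(Q,W)}$.
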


Applying Theorem~\ref{thm:main-3} to the case when $\underline{i}$ consists of vertices of $Q$ on which there are no loops or 2-cycles and between which there are no arrows, we obtain \cite[Theorem 1.1(a)]{Mizuno15}. Actually the whole study in this paper was motivated by this theorem of Mizuno.

\medskip
During the preparation of this work, Yann Palu informed us of the work \cite{FGPPP23}, which establishes Theorem~\ref{thm:main-1}(a)(c) for non-positive dg categories in a relative version (\cite[Theorem B]{FGPPP23}) and the `if' part of Theorem~\ref{thm:main-2}(a) for cluster categories of quivers with potential (\cite[Corollary 6.16]{FGPPP23}), and the work \cite{ChenXF23}, which establishes Theorem~\ref{thm:main-1}(a)(c) in the general setting of algebraic hereditary extriangulated categories (\cite[Example 7]{ChenXF23}). These three projects were carried on simultaneously with different motivations. Also, Bernhard Keller pointed out to us that the functor $\mathbb{P}$ exists if $\ct$ has a morphic enhancement in the sense of \cite[Appendix C]{Krause20}.

\medskip
\noindent{\it Notation.}
Throughout, let $k$ be a commutative ring and we will denote by $\Sigma$ the shift functor of all triangulated categories.

For an additive $k$-category $\ca$, we denote by $\ch^b(\ca)$ the homotopy category of bounded complexes in $\ca$, and by $\ch^{[-1,0]}(\ca)$ its full subcategory consisting of complexes which are isomorphic to complexes concentrated in degrees $-1$ and $0$ (we call such complexes \emph{2-term complexes}). For an object $X$ of $\ca$, we denote by $\add(X)=\add_\ca(X)$ the full subcategory of $\ca$ consisting of direct summands of finite direct sums of copies of $X$. If $\ca$ is in addition triangulated, we denote by $\thick(X)=\thick_\ca(X)$ the smallest triangulated subcategory of $\ca$ which contains $X$ and which is closed under taking direct summands.

\section{The category of finitely presented objects}
\label{s:the-category-of-finitely-presented-objects}

In this section we recall some basics on the category finitely presented by a rigid object in a triangulated category, including the definition, the extriangle structure, relative cluster-tilting objects/subcategories and their mutations.

\smallskip
Throughout this section, let $\ct$ be an idempotent-complete triangulated $k$-category. A full subcategory $\cm$ of $\ct$ is said to be \emph{$d$-rigid} if $\Hom_\ct(M,\Sigma^p M')=0$ for any $M,M'\in\cm$ and any $1\leq p\leq d$, and it is said to be \emph{rigid} if it is $1$-rigid. It is said to be \emph{presilting} if it is $d$-rigid for any $d\in\mathbb{N}$, and \emph{silting} if in addition $\ct=\thick(\cm)$. An object $M$ of $\ct$ is said to be \emph{rigid} (respectively, \emph{presilting}, \emph{silting})  if $\add(M)$ is a rigid (respectively, presilting, silting) subcategory of $\ct$, equivalently, if $\Hom_\ct(M,\Sigma M)=0$ (respectively, $\Hom_\ct(M,\Sigma^p M)=0$ for any $p>0$, $\Hom_\ct(M,\Sigma^p M)=0$ for any $p>0$ and $\ct=\thick(M)$). 

In the sequel of this section we fix a rigid subcategory $\cm$ which is closed under taking finite direct sums and direct summands.

\subsection{The category}
\label{ss:the-category}

Put
\[
\fpr_\ct(\cm):=\{X\in\ct\mid \exists\text{ a triangle }\xymatrix@C=0.5pc{M^{-1}\ar[r] & M^0\ar[r] & X\ar[r] & \Sigma M^{-1}}\text{ with }M^{-1},M^0\in\cm\}.
\]
For example, $\fpr_{\ch^b(\cm)}(\cm)=\ch^{[-1,0]}(\cm)$. 
If it does not cause confusion, we will drop the subscript and simply write $\fpr(\cm)$. If $M$ is a rigid object of $\ct$, we write $\fpr(M)$ for $\fpr(\add(M))$.

\begin{remark}
In the literature $\fpr(\cm)$ is also denoted by $\cm*\Sigma\cm$. By \cite[Lemma 2.6(b)]{IyamaYang18}, $\fpr(\cm)$ is idempotent-complete. If $\cm$ is presilting, then $\fpr(\cm)$ is extension-closed but not closed under shifts. If $\cm$ is a cluster-tilting subcategory, then $\ct=\fpr(\cm)$, see \cite[Lemma 3.2]{KoenigZhu08}. If $\ct$ is a cluster tube and $\cm=\add(M)$ with $M$ a maximal rigid object, then $\fpr(\cm)$ is neither extension-closed nor closed under shifts, see \cite[Section 4.3]{Yang12} for a description of $\fpr(\cm)$ in this case.
\end{remark}

\subsection{The extriangle structure}

For $X,Y\in\fpr(\cm)$, let $[\Sigma\cm](X,\Sigma Y)$ denote the subset of morphisms $X\to \Sigma Y$ which factors through an object in $\Sigma\cm$. It is a remarkable idea of \cite{YangZhu19} to consider this space instead of $\Hom_\ct(X,\Sigma Y)$. 

Given two triangles
\[
\xymatrix@R=0.5pc{
\eta\colon M^{-1}\ar[r]^(0.55)a & M^0 \ar[r]^b & X\ar[r]^(0.4)c & \Sigma M^{-1},\\
\eta'\colon M'^{-1}\ar[r]^(0.55){a'} & M'^0 \ar[r]^{b'} & Y\ar[r]^(0.4){c'} & \Sigma M'^{-1} 
}
\]
with $M^{-1},M^0,M'^{-1},M'^0\in\cm$, we denote the 2-term complexes $(M^{-1}\stackrel{a}{\to}M^0)$ and $(M'^{-1}\stackrel{a'}{\to}M'^0)$ by $t(\eta)$ and $t(\eta')$, respectively. 
 Note that $\Hom_{\ch^b(\cm)}(t(\eta),\Sigma t(\eta'))$ consists of homotopy classes of morphisms $M_1\to M'_0$. For a morphism $f\colon M^{-1}\to M'^0$, we have a morphism $\Sigma b'\circ\Sigma f\circ c\in[\Sigma\cm](X,\Sigma Y)$:
\[
\xymatrix@R=2.5pc{
M^{-1}\ar[r]^{a} & M^0\ar[r]^(0.55){b} & X\ar[r]^(0.4){c} & \Sigma M^{-1}\ar[dll]|(0.3){\Sigma f}\\
\Sigma M'^{-1}\ar[r]^{-\Sigma a'} & \Sigma M'^0\ar[r]^(0.55){-\Sigma b'} & \Sigma Y\ar[r]^(0.4){-\Sigma c'} & \Sigma^2 M'^{-1}.
}
\]

\begin{lemma}
\label{lem:taking-presentation-preserves-extension}
The assignment $f\mapsto \Sigma b'\circ\Sigma f\circ c$ induces a bijection
\[
\Hom_{\ch^b(\cm)}(t(\eta),\Sigma t(\eta'))\longrightarrow [\Sigma\cm](X,\Sigma Y).
\]
\end{lemma}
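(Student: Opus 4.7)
The plan is to unwind both sides in terms of $\Hom$-groups between the building blocks $M^{-1},M^0,M'^{-1},M'^0$ and verify the identification using long exact sequences coming from the triangles $\eta$ and $\eta'$, where rigidity of $\cm$ is the only nontrivial input.

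First I would identify $\Hom_{\ch^b(\cm)}(t(\eta),\Sigma t(\eta'))$ with $\Hom_\cm(M^{-1},M'^0)$ modulo the subgroup of morphisms of the form $h^0\circ a-a'\circ h^{-1}$: a chain map $t(\eta)\to\Sigma t(\eta')$ has only one nonzero component (a map $M^{-1}\to M'^0$), because $t(\eta)^0=M^0$ is sent to $(\Sigma t(\eta'))^0=0$, and the homotopy computation with the shifted differential $-a'$ yields exactly the stated relation. Next I would check the assignment is well-defined by computing $\Sigma b'\circ\Sigma f\circ c$ on $f=h^0\circ a-a'\circ h^{-1}$: this uses $\Sigma a\circ c=0$ (a consecutive composition in the rotated triangle of $\eta$) and $b'\circ a'=0$.

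To handle surjectivity I would apply $\Hom_\ct(-,\Sigma Y)$ to $\eta$ and $\Hom_\ct(X,-)$ to $\eta'$. Rigidity gives $\Hom_\ct(M^0,\Sigma M'^{-1})=0$, so any $g\colon X\to\Sigma Y$ factoring through some $\Sigma M''\in\Sigma\cm$ satisfies $g\circ b=0$ and hence equals $g'\circ c$ for some $g'\colon\Sigma M^{-1}\to\Sigma Y$, i.e.\ $\Sigma \widetilde{g}$ for $\widetilde{g}\colon M^{-1}\to Y$. Applying $\Hom_\ct(M^{-1},-)$ to $\eta'$ and using $\Hom_\ct(M^{-1},\Sigma M'^{-1})=0$ shows $\widetilde{g}=b'\circ f$ for some $f\colon M^{-1}\to M'^0$, giving $g=\Sigma b'\circ\Sigma f\circ c$. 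In fact this argument pins down $[\Sigma\cm](X,\Sigma Y)$ as the kernel of $b^*\colon\Hom_\ct(X,\Sigma Y)\to\Hom_\ct(M^0,\Sigma Y)$, which will be convenient for injectivity.

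For injectivity, assume $\Sigma b'\circ\Sigma f\circ c=0$. Then $\Sigma(b'\circ f)\circ c=0$, so $\Sigma(b'\circ f)$ factors through $\Sigma a$ in the rotated triangle, i.e.\ $b'\circ f=g\circ a$ for some $g\colon M^0\to Y$. Rigidity again ($\Hom_\ct(M^0,\Sigma M'^{-1})=0$) lifts $g$ to $h^0\colon M^0\to M'^0$ with $g=b'\circ h^0$, whence $b'\circ(f-h^0\circ a)=0$. The long exact sequence from $\eta'$ then writes $f-h^0\circ a=-a'\circ h^{-1}$ for some $h^{-1}\colon M^{-1}\to M'^{-1}$, exhibiting $f$ as the null-homotopy $f=h^0\circ a-a'\circ h^{-1}$.

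The statement is really a diagram chase, and no step is a serious obstacle in itself; the main thing to get right is to track consistently the homotopy relation on the source (which involves the shifted differential $-a'$) and to invoke rigidity of $\cm$ at each of the three Ext-vanishings ($\Hom_\ct(M^0,\Sigma M'^{-1})$, $\Hom_\ct(M^{-1},\Sigma M'^{-1})$ and, implicitly, $\Hom_\ct(M^0,\Sigma M'')$) without introducing sign errors in the rotated triangles.
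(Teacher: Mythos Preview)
Your proposal is correct and follows essentially the same approach as the paper: verify well-definedness by checking that null-homotopic $f$ (i.e.\ $f=h^0\circ a-a'\circ h^{-1}$) map to zero, prove surjectivity by using rigidity to factor $g$ first through $c$ and then through $\Sigma b'$, and prove injectivity by the same two-step lifting to exhibit the homotopy. The only cosmetic difference is that for surjectivity the paper keeps the intermediate object $\Sigma M$ explicit and lifts $g_1$ and $g_2$ separately, whereas you immediately deduce $g\circ b=0$ and factor $g$ as a whole; both arguments invoke rigidity at the same places.
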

\begin{proof}
We first show that the map is well-defined. It suffices to show that if $f$ is null-homotopic, then $\Sigma b'\circ\Sigma f\circ c=0$. Because $f$ is null-homotopic, there exist
morphisms $h^0\colon M^0\to M'^0$ and $h^{-1}\colon M^{-1}\to M'^{-1}$ such that $f=h^0\circ a-a'\circ h^{-1}$:
\[
\xymatrix{
& M^{-1}\ar@{.>}[dl]_{h^{-1}}\ar[r]^{a}\ar[d]^{f} & M^0\ar@{.>}[dl]^{h^0} \\
M'^{-1}\ar[r]^{-a'} & M'^0.
}
\]
Therefore
\begin{eqnarray*}
\Sigma b'\circ\Sigma f\circ c&=&\Sigma b'\circ (\Sigma h^0\circ\Sigma a-\Sigma a'\circ\Sigma h^{-1})\circ c\\
&=&\Sigma b'\circ \Sigma h^0\circ\Sigma a\circ c+\Sigma b'\circ \Sigma a'\circ\Sigma h^{-1}\circ c\\
&=&0.
\end{eqnarray*}

Next we show that the map is surjective. Let $g\in[\Sigma\cm](X,\Sigma Y)$. Then there exist an object $M\in\cm$ and two morphisms $g_1\colon X\to\Sigma M$ and $g_2\colon \Sigma M\to\Sigma Y$ such that $g=g_2\circ g_1$. Since $g_1\circ b=0$, there exists a morphism $f_1\colon M^{-1}\to M$ such that $g_1=\Sigma f_1\circ c$. Since $(-\Sigma c')\circ g_2=0$, there exists a morphism $f_2\colon M\to M'^0$ such that $g_2=-\Sigma b'\circ \Sigma f_2$.
\[
\xymatrix{
M^0\ar[r]^{b} & X\ar[r]^{c}\ar[d]^{g_1} &\Sigma M^{-1}\ar[r]^{\Sigma a}\ar@{.>}[ld]^{\Sigma f_1} & \Sigma M^0\\
& \Sigma M\ar@{.>}[ld]_{\Sigma f_2}\ar[d]^{g_2} &&&\\
\Sigma M'^0\ar[r]^{-\Sigma b'} & \Sigma Y\ar[r]^{-\Sigma c'} & \Sigma^2 M'^{-1}
}
\]
Therefore letting $f=-f_2\circ f_1$ we have $g=\Sigma b'\circ\Sigma f\circ c$.

Finally, we show that the map is injective. Suppose $\Sigma b'\circ\Sigma f\circ c=0$, \ie $b'\circ f\circ(-\Sigma^{-1}c)=0$. Then  there exists a morphism $h'\colon M^0\to Y$ such that $h'\circ a=b'\circ f$. But $c'\circ h'=0$, so there exists $h^0\colon M^0\to M'^0$ such that $h'=b'\circ h^0$. Now $b'\circ (f-h^0\circ a)=0$, so there exists $h^{-1}\colon M^{-1}\to M'^{-1}$ such that $f-h^0\circ a=a'\circ h^{-1}$, hence $f$ is null-homotopic.
\[
\xymatrix@C=4pc{
\Sigma^{-1}X\ar[r]^{-\Sigma^{-1}c} & M^{-1}\ar[r]^{a}\ar@{.>}[ld]|{h^{-1}}\ar[d]^f & M^0\ar@{.>}[dl]|{h^0} \ar@{.>}[d]^{h'}\\
M'^{-1}\ar[r]^{a'} & M'^0\ar[r]^{b'} & Y\ar[r]^{c'} & \Sigma M'^{-1}
} 
\]
\end{proof}

The following horseshoe-lemma type result is a special case of the first statement of \cite[Lemma 4.57]{PadrolPaluPilaudPlamondon23}. It can be directly proved by using Lemma~\ref{lem:taking-presentation-preserves-extension} and the octahedral axiom.

%For completeness we include a proof. 

\begin{lemma}
\label{lem:the-extriangle-structure}
$\fpr(\cm)$ is $[\Sigma\cm]$-extension-closed, that is, if $\xymatrix@=0.7pc{Y\ar[r] & Z\ar[r] & X\ar[r]^w & \Sigma Y}$ is a triangle in $\ct$ with $w\in[\Sigma\cm](X,\Sigma Y)$, then $Z\in\fpr(M)$.
\end{lemma}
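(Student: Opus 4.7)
The plan is to produce an explicit presentation of $Z$ by $\cm$, namely a distinguished triangle
\[
M^{-1}\oplus M'^{-1}\xrightarrow{\alpha} M^0\oplus M'^0\longrightarrow Z\longrightarrow\Sigma(M^{-1}\oplus M'^{-1})
\]
in which $\alpha$ is the block morphism $\begin{psmallmatrix}a & f\\ 0 & a'\end{psmallmatrix}$, where $f\colon M^{-1}\to M'^0$ is the morphism supplied by Lemma~\ref{lem:taking-presentation-preserves-extension} satisfying $w=\Sigma b'\circ\Sigma f\circ c$. Producing such a triangle would immediately give $Z\in\fpr(\cm)$.

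The verification will proceed by applications of the octahedral axiom. First, I would build an auxiliary object $Z''$ fitting in two triangles simultaneously: a ``defining'' triangle $M'^0\xrightarrow{\alpha'}Z''\to X\xrightarrow{\Sigma f\circ c}\Sigma M'^0$, and a ``presentation'' triangle $M^{-1}\to M^0\oplus M'^0\to Z''\to\Sigma M^{-1}$ whose first morphism is $\begin{psmallmatrix}a\\f\end{psmallmatrix}$. Both are obtained by applying the octahedral axiom to suitable factorizations, e.g.\ to the compositions $X\xrightarrow{c}\Sigma M^{-1}\xrightarrow{\Sigma f}\Sigma M'^0$ and $M^{-1}\xrightarrow{\begin{psmallmatrix}a\\f\end{psmallmatrix}} M^0\oplus M'^0\twoheadrightarrow M'^0$ respectively, and then reconciled; in this identification $\alpha'$ is the canonical composition $M'^0\hookrightarrow M^0\oplus M'^0\twoheadrightarrow Z''$.

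Second, the relation $w=\Sigma b'\circ(\Sigma f\circ c)$ says that the given triangle $Y\to Z\to X\xrightarrow{w}\Sigma Y$ is the pushout of the defining triangle of $Z''$ along $b'\colon M'^0\to Y$; in the triangulated setting this is encoded by a triangle $M'^0\to Y\oplus Z''\to Z\to\Sigma M'^0$ whose first morphism has components $b'$ and $-\alpha'$. A further octahedral application, to the factorization of this map through the direct sum of the presentations of $Y$ and $Z''$ (that is, through $M'^0\oplus(M^0\oplus M'^0)\to Y\oplus Z''$), then delivers the desired presentation of $Z$; the key technical input is that the cone of the factoring section $M'^0\to M'^0\oplus M^0\oplus M'^0$ splits as $M^0\oplus M'^0$, which is a routine split computation.

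The main obstacle I expect is the careful bookkeeping of maps and signs across the octahedral applications, and in particular verifying that the two triangles of the first step produce the same $Z''$ (i.e.\ that the two connecting morphisms $\cone(f)\to\Sigma M^0$ agree up to an automorphism of $\cone(f)$, which can be arranged via TR3 and the $3\times 3$ lemma). The overall structure of the argument is the triangulated analogue of the classical horseshoe lemma for projective resolutions.
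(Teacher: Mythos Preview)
Your approach is correct and is essentially the one the paper indicates: use Lemma~\ref{lem:taking-presentation-preserves-extension} to write $w=\Sigma b'\circ\Sigma f\circ c$ and then obtain the horseshoe presentation of $Z$ with block matrix $\begin{psmallmatrix}a & f\\ 0 & a'\end{psmallmatrix}$ via the octahedral axiom. The paper's (suppressed) direct argument organizes this as a single octahedron applied to a suitable factorization through $M^0_Y\oplus M^{-1}_X$, whereas you route through an intermediate cone $Z''$ in two stages; this is only a cosmetic difference, and your anticipated bookkeeping issue (identifying the two triangles on $Z''$) is handled by the uniqueness in TR3 together with the rotation axiom, exactly as you suggest.
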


A triangle $\xymatrix@=0.7pc{Y\ar[r] & Z\ar[r] & X\ar[r]^w & \Sigma Y}$ in $\ct$ with $X,Y,Z\in\fpr(\cm)$ is called an \emph{extriangle} if $w\in[\Sigma\cm](X,\Sigma Y)$. We call the set of extriangles \emph{the extriangle structure} on $\fpr(\cm)$. Indeed, with these extriangles $\fpr(\cm)$ is an extriangulated category in the sense of \cite{NakaokaPalu19}, due to \cite[Lemma 4.57]{PadrolPaluPilaudPlamondon23}. In fact, it is a reduced $0$-Auslander extriangulated category in the sense of \cite[Section 3.2]{GorskyNakaokaPalu23} with $\cm$ being the full subcategory of projective objects. We will use this fact for Lemmas~\ref{lem:number-of-summands-of-cto} and~\ref{lem:mutation-of-relative-cluster-tilting}.

\subsection{Relative cluster-tilting subcategories}

We follow \cite[Definition 2.3]{ZhouZhu20} but we adopt the terminologies in \cite{YangZhu19}. 
Let $\cn$ be a subcategory of $\fpr(\cm)$ which is closed under taking finite direct sums and direct summands. It is said to be
\begin{itemize}
\item[(a)] \emph{relative rigid} if $[\Sigma\cm](X,\Sigma Y)=0$ for any $X,Y\in\cn$;
\item[(b)]  \emph{maximal relative rigid} if it is relative rigid and if $X\in\fpr(\cm)$ satisfies that $[\Sigma \cm](X,\Sigma X)=0$ and that $[\Sigma \cm](X,\Sigma N)=0=[\Sigma \cm](N,\Sigma X)$ for all $N\in\cn$, then $X\in\cn$;
\item[(c)] \emph{generating} if $\cm\subseteq \Sigma^{-1}\cn*\cn$;
\item[(d)]  \emph{weakly relative cluster-tilting} if it is relative rigid and generating and 
\[
\cn=\{X\in\fpr(\cm)\mid [\Sigma \cm](X,\Sigma N)=0=[\Sigma \cm](N,\Sigma X)\};
\]
\item[(e)] \emph{relative cluster-tilting} if it is relative rigid, contravariantly finite and 
\[
\cn=\{X\in\fpr(\cm)\mid [\Sigma \cm](X,\Sigma N)=0=[\Sigma \cm](N,\Sigma X)\}.
\]
\end{itemize}
Clearly, if $\cn$ is rigid, then it is relative rigid. Note that $\cn$ is tilting in the sense of \cite[Section 4.1]{GorskyNakaokaPalu23} if and only if it is relative rigid and generating.

\begin{lemma}
\label{lem:comparing-two-rigidity}
Assume that $\cm$ is $2$-rigid. Then $\Hom(X,\Sigma Y)=[\Sigma\cm](X,\Sigma Y)$ for any $X,Y\in\fpr(\cm)$. As a consequence, $\cn$ is relative rigid if and only if $\cn$ is rigid. 
\end{lemma}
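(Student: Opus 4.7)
The plan is to prove the nontrivial inclusion $\Hom_\ct(X,\Sigma Y)\subseteq[\Sigma\cm](X,\Sigma Y)$ by means of two applications of the long exact sequence attached to a triangle, the key input being $2$-rigidity of $\cm$. Fix presentation triangles
\[
\xymatrix@C=1.3pc{M_X^{-1}\ar[r]^{a_X} & M_X^0\ar[r]^{b_X} & X\ar[r]^(0.4){c_X} & \Sigma M_X^{-1}}, \qquad
\xymatrix@C=1.3pc{M_Y^{-1}\ar[r]^{a_Y} & M_Y^0\ar[r]^{b_Y} & Y\ar[r]^(0.4){c_Y} & \Sigma M_Y^{-1}}
\]
with the four middle objects in $\cm$. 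A morphism $g\colon X\to\Sigma Y$ factors through $\Sigma M_Y^0\in\Sigma\cm$ if and only if it lies in the image of $(-\Sigma b_Y)_*\colon \Hom_\ct(X,\Sigma M_Y^0)\to\Hom_\ct(X,\Sigma Y)$, and applying $\Hom_\ct(X,-)$ to the triangle for $Y$ this image equals the kernel of the map $\Hom_\ct(X,\Sigma Y)\to\Hom_\ct(X,\Sigma^2 M_Y^{-1})$. So it suffices to show
\[
\Hom_\ct(X,\Sigma^2 M_Y^{-1})=0.
\]

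To see this, apply the cohomological functor $\Hom_\ct(-,\Sigma^2 M_Y^{-1})$ to the triangle for $X$. We obtain an exact sequence
\[
\Hom_\ct(\Sigma M_X^{-1},\Sigma^2 M_Y^{-1})\longrightarrow \Hom_\ct(X,\Sigma^2 M_Y^{-1})\longrightarrow \Hom_\ct(M_X^0,\Sigma^2 M_Y^{-1}).
\]
The left-hand term is $\Hom_\ct(M_X^{-1},\Sigma M_Y^{-1})$, which vanishes because $\cm$ is rigid, while the right-hand term vanishes because $\cm$ is $2$-rigid. Therefore the middle term vanishes as required, and every $g\colon X\to\Sigma Y$ factors through $\Sigma M_Y^0\in\Sigma\cm$. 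The reverse inclusion $[\Sigma\cm](X,\Sigma Y)\subseteq\Hom_\ct(X,\Sigma Y)$ is tautological.

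For the consequence, observe that for $X,Y\in\cn\subseteq\fpr(\cm)$ the first part gives $\Hom_\ct(X,\Sigma Y)=[\Sigma\cm](X,\Sigma Y)$, so one side vanishes for all such pairs if and only if the other does; that is, $\cn$ is relative rigid if and only if it is rigid.

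The only step that needs care is choosing the right direction in which to apply the long exact sequence: one must hit $\Sigma^2 M_Y^{-1}$ from $X$, and then kill it using that $X$ is built in two steps from objects in $\cm$ together with $2$-rigidity. No obstacle beyond this bookkeeping is expected.
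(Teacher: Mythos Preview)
Your argument is correct and follows essentially the same strategy as the paper: use $1$- and $2$-rigidity of $\cm$ together with the long exact sequences coming from the two presentation triangles to force the desired factorisation. The only cosmetic difference is that the paper factors $f$ through $\Sigma M_X^{-1}$ (showing $f\circ b_X=0$ via $\Hom(M_X^0,\Sigma^2 M_Y^{-1})=0$ and $\Hom(M_X^0,\Sigma M_Y^0)=0$), whereas you factor through $\Sigma M_Y^0$ by first establishing $\Hom_\ct(X,\Sigma^2 M_Y^{-1})=0$; these are dual variants of the same idea.
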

\begin{proof}
Take $f\in\Hom_\ct(X,\Sigma Y)$ and assume that the two rows in the following diagram are triangles with $M^0,M^{-1},M'^0,M'^{-1}\in\cm$:
\[
\xymatrix{
M^{-1}\ar[r] & M^0\ar[r]^{b} & X\ar[r]^{c}\ar[d]^f & \Sigma M^{-1}\\
\Sigma M'^{-1}\ar[r] & \Sigma M'^0\ar[r]^{-\Sigma b'} & \Sigma Y\ar[r]^{-\Sigma c'} & \Sigma^2 M'^{-1}.
}
\]
Since $\cm$ is 2-rigid, we have $-\Sigma c'\circ f\circ b=0$. So $f\circ b$ factors through $\Sigma M'^0$, and hence $f\circ b=0$. Thus $f$ factors through $\Sigma M^{-1}$, and hence belongs to $[\Sigma \cm](X,\Sigma Y)$. 
\end{proof}

An object $N\in\fpr(\cm)$ is said to be \emph{relative rigid}, \emph{maximal relative rigid}, \emph{generating}, \emph{weakly relative cluster-tilting} and \emph{relative cluster-tilting} if $\add(N)$ is relative rigid, maximal relative rigid, generating, weakly relative cluster-tilting and relative cluster-tilting, respectively.

\begin{lemma}
\label{lem:number-of-summands-of-cto}
Assume that $\cm=\add(M)$ for a rigid object $M$ and that $k$ is a field and $\fpr(M)$ is Hom-finite over $k$. The following conditions are equivalent for a relative rigid object $N$ of $\fpr(M)$:
\begin{itemize}
\item[(i)] $N$ is relative cluster-tilting,
\item[(ii)] $N$ is maximal relative rigid,
\item[(iii)] $N$ is generating,
\item[(iv)] $|M|=|N|$.
\end{itemize}
\end{lemma}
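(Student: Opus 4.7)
The plan is to transfer the four conditions along the functor $\mathbb{P}\colon\fpr(M)\to\ch^{[-1,0]}(\proj A)$ of Theorem~\ref{thm:main-1} and then appeal to the classical characterisation of $2$-term silting complexes over a finite-dimensional algebra. Since $\fpr(M)$ is Hom-finite and $M\in\fpr(M)$, the algebra $A=\End_\ct(M)$ is finite-dimensional, so $|M|$ equals the number $|A|$ of isomorphism classes of indecomposable projective $A$-modules. By construction $\mathbb{P}(M)$ is the $2$-term complex $A$ concentrated in degree $0$, in particular $|\mathbb{P}(M)|=|A|$.

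Writing $T:=\mathbb{P}(N)$, I will use that $\mathbb{P}$ detects extriangles, detects isomorphisms and indecomposability, and induces a bijection on isomorphism classes (Theorem~\ref{thm:main-1}(a)(b)(c)) to show that $N$ is relative rigid iff $T$ is $2$-term presilting, with $|T|=|N|$. Condition (i) transfers to ``$T$ is $2$-term silting'' by Theorem~\ref{thm:main-1}(d); condition (ii) transfers to ``$T$ is maximal $2$-term presilting'' since the defining property is expressed entirely through extriangles and isomorphism classes; condition (iii) transfers to ``$A\in\Sigma^{-1}\add(T)*\add(T)$''; and condition (iv) transfers to ``$|T|=|A|$''. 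The lemma then reduces to the classical equivalence of these four conditions for a $2$-term presilting complex $T$ in $\ch^b(\proj A)$ over a finite-dimensional algebra $A$, which is well known (cf.\ \cite{AdachiIyamaReiten14}).

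The main point of care will be the transfer of condition (iii). The generating condition $\cm\subseteq\Sigma^{-1}\add(N)*\add(N)$ produces, for each $M\in\cm$, a triangle $\Sigma^{-1}N^1\to M\to N^0\to N^1$ in $\ct$; rotating to $M\to N^0\to N^1\to\Sigma M$ one sees that the last morphism factors through $\Sigma M\in\Sigma\cm$, so this is an extriangle of $\fpr(M)$. Applying $\mathbb{P}$ and using $\mathbb{P}(M)\cong A$ yields the desired triangle in $\ch^b(\proj A)$; conversely, density of $\mathbb{P}$ together with its detection of extriangles recovers the generating condition on $\cm$ from the generating condition on $T$. Finally, the contravariant finiteness of $\add(N)$ required in (i) is automatic since $N$ is a single object in the Hom-finite category $\fpr(M)$, so (i) is indeed equivalent to the purely rigidity-theoretic characterisation used in (ii).
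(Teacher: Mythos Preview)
Your approach of transferring the problem along $\mathbb{P}$ is natural and mostly sound, but it differs substantially from the paper's proof, which simply cites \cite[Theorem~3.1]{ZhouZhu20} for (i)$\Leftrightarrow$(ii) and \cite[Theorem~4.3]{GorskyNakaokaPalu23} (applied to the reduced $0$-Auslander extriangulated category $\fpr(M)$) for (ii)$\Leftrightarrow$(iii)$\Leftrightarrow$(iv).

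There is, however, a circularity concern in your invocation of Theorem~\ref{thm:main-1}(d) for condition~(i). In the paper, Theorem~\ref{thm:main-1}(d) is obtained from Theorem~\ref{thm:P-induces-bijection-between-cto}, which establishes a bijection between \emph{weakly} relative cluster-tilting subcategories and $2$-term silting subcategories. Passing from ``weakly relative cluster-tilting'' (which by definition includes the generating condition) to ``relative cluster-tilting'' (which instead requires contravariant finiteness) for objects in the Hom-finite case is precisely the implication (i)$\Rightarrow$(iii) of the very lemma you are trying to prove. Your last paragraph notes that contravariant finiteness of $\add(N)$ is automatic, but this only yields the implication ``weakly relative cluster-tilting $\Rightarrow$ relative cluster-tilting'', not the converse needed to quote Theorem~\ref{thm:main-1}(d) non-circularly.

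The fix is straightforward: avoid Theorem~\ref{thm:main-1}(d) and transfer condition~(i) directly. Since contravariant finiteness is automatic, (i) is equivalent to ``$N$ is relative rigid and $\add(N)=\{X\in\fpr(M)\mid [\Sigma\cm](X,\Sigma N)=0=[\Sigma\cm](N,\Sigma X)\}$''. Using Lemma~\ref{lem:taking-presentation-preserves-extension-2} and the bijection on isomorphism classes from Theorem~\ref{thm:the-kernel}, this transfers to the analogous condition on $T$ in $\ch^{[-1,0]}(\proj A)$. On that side, this is equivalent to $T$ being $2$-term silting by Bongartz completion: over a finite-dimensional algebra every $2$-term presilting object is a summand of a $2$-term silting object \cite{AdachiIyamaReiten14}, and the orthogonality condition then forces the complement into $\add(T)$. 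With this modification, your argument becomes a genuine alternative to the paper's citation-based proof, deriving Lemma~\ref{lem:number-of-summands-of-cto} from the properties of $\mathbb{P}$ together with classical $2$-term silting theory rather than from the extriangulated-category literature.
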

\begin{proof}
The equivalence (i)$\Leftrightarrow$(ii) is obtained as a consequence of \cite[Theorem 3.1]{ZhouZhu20}. The equivalences (ii)$\Leftrightarrow$(iii)$\Leftrightarrow$(iv) are obtained by applying \cite[Theorem 4.3]{GorskyNakaokaPalu23}.
\end{proof}

\subsection{Mutation}
Assume that $k$ is a field and $\fpr(\cm)$ is Hom-finite over $k$. Assume further that $\cm=\add(M)$ for a rigid object $M$.

Let $N$ be a basic relative cluster-tilting object of $\fpr(M)$ and write $N=N_1\oplus\ldots\oplus N_n$ with $N_1,\ldots,N_n$ indecomposable. Fix $i=1,\ldots,n$. The following result is \cite[Corollary 4.28]{GorskyNakaokaPalu23} applied to $\fpr(M)$ (see also \cite[Section 2]{HeZhouZhu21}).

\begin{lemma} 
\label{lem:mutation-of-relative-cluster-tilting}
There exists an indecomposable object $N_i^*$, unique up to isomorphism, such that $N_i^*$ is not isomorphic to $N_i$ and $\bigoplus_{j\neq i}N_j\oplus N_i^*$ is a relative cluster-tilting object of $\fpr(M)$. Moreover, there is 
an extriangle
\begin{align}
\xymatrix{
N_i\ar[r] & E \ar[r] & N_i^*\ar[r]&\Sigma N_i 
},\label{triangle:left-mutation}
\end{align}
or an extriangle
\begin{align}
\xymatrix{
N_i^*\ar[r] & E'\ar[r] & N_i\ar[r] &\Sigma N_i^*
},\label{triangle:right-mutation}
\end{align}
but not both at the same time. In the former case we write $\mu_i^L(N)=\bigoplus_{j\neq i}N_j\oplus N_i^*$ and in the latter case we write $\mu_i^R(N)=\bigoplus_{j\neq i}N_j\oplus N_i^*$.
\end{lemma}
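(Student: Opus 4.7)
The plan is to invoke the general mutation theory for tilting objects in reduced $0$-Auslander extriangulated categories, which the paper has already recalled is the structure carried by $\fpr(M)$ (with $\cm=\add(M)$ as the subcategory of projectives). Under the standing Hom-finiteness hypothesis, $\fpr(M)$ is Krull--Schmidt, and by Lemma~\ref{lem:number-of-summands-of-cto} the notions of relative cluster-tilting, maximal relative rigid, and relative rigid generating all coincide; in particular our $N$ is a basic tilting object in the sense of Gorsky--Nakaoka--Palu.

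Fix $i$ and write $N'=\bigoplus_{j\neq i}N_j$. The first step is to produce two candidates for $N_i^*$. Since $\add(N)$ is contravariantly finite and Hom-finite, $N_i$ admits a minimal right $\add(N')$-approximation $f\colon E'\to N_i$; because $\add(N)$ is generating, $f$ fits into an extriangle $N_i^*\to E'\xrightarrow{f} N_i\dashrightarrow$, giving candidate~\eqref{triangle:right-mutation}. Dually, using that $\add(N')$ is covariantly finite, a minimal left $\add(N')$-approximation $N_i\to E$ completes to an extriangle $N_i\to E\to N_i^*\dashrightarrow$, giving candidate~\eqref{triangle:left-mutation}. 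One then checks, using minimality and the fact that $N_i$ is indecomposable and not a summand of $N'$, that in each case $N_i^*$ is either zero or indecomposable, and if nonzero, it is not isomorphic to $N_i$.

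The core of the argument is to verify that exactly one of the two candidates is nonzero and that the resulting $\bigoplus_{j\neq i}N_j\oplus N_i^*$ is again relative cluster-tilting. That $N_i^*$ is the zero object in precisely one of the two situations is characteristic of the $0$-Auslander setting: if both approximations were split, then $N_i$ would itself lie in $\add(N')$, contradicting basicness; if neither were split, one gets a contradiction with rigidity by combining the two extriangles via the octahedral axiom to produce a nonzero element of $[\Sigma\cm](N_i^*,\Sigma N_i^*)$ (using $\Hom(N_i^*,\Sigma N_i^*)=[\Sigma\cm](N_i^*,\Sigma N_i^*)$ which follows from the projectivity of $\cm$ in the extriangulated structure). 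The new relative rigidity of $\bigoplus_{j\neq i}N_j\oplus N_i^*$ follows from applying $\Hom(-,\Sigma N')$ and $\Hom(N',\Sigma -)$ to the chosen extriangle, and the generating property is inherited directly. The uniqueness of $N_i^*$ up to isomorphism follows from minimality of the approximations, since any other candidate would have to arise from the same (unique up to isomorphism) minimal approximation.

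The main obstacle I anticipate is the dichotomy step --- showing that exactly one of \eqref{triangle:left-mutation}, \eqref{triangle:right-mutation} can occur, not both and not neither --- since this is what distinguishes the $0$-Auslander situation from general extriangulated mutation. Rather than reprove this by hand, the cleanest route is to cite \cite[Corollary~4.28]{GorskyNakaokaPalu23} directly, noting that its hypotheses (reduced $0$-Auslander, Hom-finite, Krull--Schmidt) are satisfied by $\fpr(M)$ by the remark following Lemma~\ref{lem:the-extriangle-structure}, and that tilting objects there agree with our relative cluster-tilting objects.
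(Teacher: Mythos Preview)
Your proposal is correct and lands on exactly the paper's approach: the paper does not give an independent proof but simply states that the result is \cite[Corollary~4.28]{GorskyNakaokaPalu23} applied to $\fpr(M)$ (with a pointer also to \cite{HeZhouZhu21}), which is precisely the citation you arrive at in your final paragraph. Your intermediate sketch is not needed and has a couple of loose steps (e.g.\ ``generating'' is not what guarantees the approximation extends to an extriangle), but since you ultimately defer to the reference, the proposal matches the paper.
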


\subsection{Special case: $\ct=\ch^b(\cm)$}

Assume that $\ct=\ch^b(\cm)$. In this case, $\fpr(\cm)=\ch^{[-1,0]}(\cm)$, and a triangle of $\ch^b(\cm)$ with all three terms in $\ch^{[-1,0]}(\cm)$ is an extriangle in $\ch^{[-1,0]}(\cm)$.
A full subcategory (respectively, an object) of $\ch^b(\cm)$ is said to be \emph{2-term} if it is contained in $\fpr(\cm)=\ch^{[-1,0]}(\cm)$.

\begin{lemma}[{\cite[Theorem 5.4]{ZhouZhu20} and \cite[Theorem 3.4]{FuGengLiu19}}]
\label{lem:rigid-vs-presilting-in-homotopy-category}
Let $\cn$ be a 2-term subcategory of $\ch^b(\cm)$ which is closed under taking finite direct sums and direct summands.
Consider the following conditions:
\begin{itemize}
\item[(i)] $\cn$ is relative rigid,
\item[(ii)] $\cn$ is rigid,
\item[(iii)] $\cn$ is presilting,
\item[(iv)] $\cn$ is weakly relative cluster-tilting,
\item[(v)] $\cn$ is silting.
\end{itemize}
Then (i)$\Leftrightarrow$(ii)$\Leftrightarrow$(iii) and (iv)$\Leftrightarrow$(v).
\end{lemma}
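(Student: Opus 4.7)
The plan is to separate the argument into the two blocks of equivalences, handling the first block by direct degree computations inside $\ch^b(\cm)$ and reducing the second block to the cited theorems.

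For $(\mathrm{i})\Leftrightarrow(\mathrm{ii})\Leftrightarrow(\mathrm{iii})$, I would first note that, viewed as stalk complexes concentrated in degree $0$, the objects of $\cm$ satisfy $\Hom_{\ch^b(\cm)}(M,\Sigma^p M')=0$ for every $p\neq 0$ automatically, since there is no nonzero chain map between stalk complexes supported in disjoint single degrees. In particular $\cm$ is $d$-rigid in $\ct=\ch^b(\cm)$ for every $d\geq 1$, hence in particular $2$-rigid, and Lemma~\ref{lem:comparing-two-rigidity} then yields $[\Sigma\cm](X,\Sigma Y)=\Hom_\ct(X,\Sigma Y)$ for all $X,Y\in\fpr(\cm)$, giving $(\mathrm{i})\Leftrightarrow(\mathrm{ii})$. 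The implication $(\mathrm{iii})\Rightarrow(\mathrm{ii})$ is tautological, and for $(\mathrm{ii})\Rightarrow(\mathrm{iii})$ I would pick representatives of $X,Y\in\cn$ concentrated in degrees $-1$ and $0$; then for each $p\geq 2$ the complex $\Sigma^p Y$ is concentrated in degrees $-1-p$ and $-p$, both $\leq -2$, so every component of a chain map $X\to\Sigma^p Y$ vanishes on the nose, forcing $\Hom_\ct(X,\Sigma^p Y)=0$ without any recourse to homotopy.

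For $(\mathrm{iv})\Leftrightarrow(\mathrm{v})$, the first block already shows that relative rigidity of $\cn$ coincides with its being presilting, a common hypothesis of (iv) and (v). The easy direction $(\mathrm{iv})\Rightarrow(\mathrm{v})$ is immediate: for every $M\in\cm$ the generating condition $\cm\subseteq \Sigma^{-1}\cn * \cn$ supplies a triangle $\Sigma^{-1}N^1\to M\to N^0\to N^1$ with $N^0,N^1\in\cn$, placing $M\in\thick(\cn)$, and since $\cm$ generates $\ct=\ch^b(\cm)$ as a triangulated category, $\thick(\cn)=\ct$ follows at once. The reverse $(\mathrm{v})\Rightarrow(\mathrm{iv})$ is the substantive part: one must upgrade the abstract containment $M\in\thick(\cn)$ to an explicit $2$-term triangle with outer terms in $\cn$ for every $M\in\cm$, and one must verify the two-sided maximality equation cutting out $\cn$ inside $\fpr(\cm)$. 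Both assertions are precisely the content of \cite[Theorem 5.4]{ZhouZhu20} and \cite[Theorem 3.4]{FuGengLiu19}, which I would invoke directly. The main obstacle, in any more self-contained treatment, would be the generating step: the standard route refines the containment $M\in\thick(\cn)$ by analysing $\cn$-approximations via the $t$-structure attached to a $2$-term silting subcategory, after which the two-sided maximality equation follows by an orthogonality argument inside this $t$-structure.
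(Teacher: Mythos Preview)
Your proposal is correct and follows essentially the same approach as the paper. The paper's own argument is the terse two-liner ``(i)$\Leftrightarrow$(ii) follows by Lemma~\ref{lem:comparing-two-rigidity}. (ii)$\Leftrightarrow$(iii) holds because $\cn$ is 2-term,'' with (iv)$\Leftrightarrow$(v) left entirely to the cited references; you simply unpack the degree arguments behind those sentences and additionally supply the easy direction (iv)$\Rightarrow$(v) by the generating-implies-thick observation, which the paper does not bother to isolate.
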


(i)$\Leftrightarrow$(ii) follows by Lemma~\ref{lem:comparing-two-rigidity}. (ii)$\Leftrightarrow$(iii) holds because $\cn$ is 2-term. According to \cite[Corollary 5.10]{SaorinZvonareva22}, if $\ct$ is small, then any silting subcategory in $\ct$ is weakly precovering and weakly preenveloping in the sense of \cite[Definition 5.8]{SaorinZvonareva22}.

Assume further that $k$ is a field and $\cm=\add(M)$ is Hom-finite over $k$. Then in this case the mutations introduced in the preceding subsection are exactly irreducible silting mutations in the sense of Aihara--Iyama~\cite{AiharaIyama12}.

\section{From finitely presented objects to 2-term complexes}
\label{s:from-finitely-presented-objects-to-two-term-complexes}

In this section we construct the functor $\mathbb{P}\colon\fpr(\cm)\to\ch^{[-1,0]}(\cm)$ and study its basic properties, especially its behaviour on morphisms and extensions as well as its compatibility with relative cluster-tilting subcategories.

\smallskip
Let $\ct$ be an idempotent-complete algebraic triangulated $k$-category and $\cm$ a rigid subcategory of $\ct$ which is closed under taking finite direct sums and direct summands.

\subsection{Lifting morphisms to presentations}
\label{ss:lifting-morphisms}

Assume that $\ct=\underline{\cf}$, where $\cf$ is a Frobenius $k$-category.
Let $\widetilde{\cm}$ be the full subcategory of $\cf$ which has the same objects as $\cm$, \ie $\underline{(\widetilde{\cm})}=\cm$. In particular, $\widetilde{\cm}$ contains all projective-injective objects. Let $X,Y$ be objects of $\cf$ with two conflations
\[
\xymatrix@R=0.5pc{
M^{-1}\ar[r]^a & M^0\ar[r]^b & X,\\
M'^{-1}\ar[r]^{a'} & M'^0\ar[r]^{b'} & Y,
}
\]
where $M^{-1},M^0,M'^{-1},M'^0\in\widetilde{\cm}$.

\begin{lemma}
\label{lem:lifting-morphisms}
For any morphism $f\colon X\to Y$ in $\cf$, there is a morphism of conflations
\[
\xymatrix{
M^{-1}\ar[r]^a \ar[d]^{f^{-1}}& M^0\ar[r]^b \ar[d]^{f^0} & X\ar[d]^f,\\
M'^{-1}\ar[r]^{a'} & M'^0\ar[r]^{b'} & Y.
}
\]
If $f$ factors through a projective-injective object $P$, then
there is a morphism $h\colon M^0\to M'^{-1}$ such that $f^{-1}=h\circ a$ and $f^0-a'\circ h$ factors through $P$.
\end{lemma}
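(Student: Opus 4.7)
The first part constructs the lift $f^{-1}, f^0$. My plan is to use a pullback argument in the exact category $\cf$. Form the pullback $E$ of the deflation $b'\colon M'^{0}\to Y$ along $f\circ b\colon M^{0}\to Y$; since pullbacks of deflations are deflations in any exact category, this yields a conflation $M'^{-1}\to E\to M^{0}$ in $\cf$ with kernel $M'^{-1}$. Its Yoneda class in $\Ext^1_\cf(M^{0},M'^{-1})$ corresponds, under the standard identification $\Ext^1_\cf(A,B)\cong \Hom_\ct(A,\Sigma B)$ available for a Frobenius category, to an element of $\Hom_\ct(M^{0},\Sigma M'^{-1})$, which vanishes by rigidity of $\cm$ (both arguments lie in $\cm$). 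Hence the pullback conflation splits in $\cf$, so there is a section $s\colon M^{0}\to E$; composing with the projection $p\colon E\to M'^{0}$ gives $f^{0}:=p\circ s$ with $b'\circ f^{0}=f\circ b$. Then $b'\circ f^{0}\circ a=f\circ b\circ a=0$, and the universal property of the kernel $a'$ produces a unique $f^{-1}\colon M^{-1}\to M'^{-1}$ with $a'\circ f^{-1}=f^{0}\circ a$. This assembles into the desired morphism of conflations.

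For the second part, write $f=\beta\circ\alpha$ with $\alpha\colon X\to P$ and $\beta\colon P\to Y$. My plan is to construct a second lift of $f$ to a morphism of conflations and then subtract to obtain $h$. Using that $P$ is projective, lift $\beta$ through the deflation $b'$ to $\widetilde\beta\colon P\to M'^{0}$ with $b'\circ\widetilde\beta=\beta$. A direct check, using $b\circ a=0$, shows that the triple $(0,\ \widetilde\beta\circ\alpha\circ b,\ f)$ is a morphism of conflations. Its difference with $(f^{-1},f^{0},f)$ is the triple $(f^{-1},\ f^{0}-\widetilde\beta\circ\alpha\circ b,\ 0)$, which is again a morphism of conflations. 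The vanishing of the right-hand column forces $b'\circ(f^{0}-\widetilde\beta\circ\alpha\circ b)=0$, so by the kernel property there exists $h\colon M^{0}\to M'^{-1}$ with $a'\circ h=f^{0}-\widetilde\beta\circ\alpha\circ b$. Since $a'$ is a monomorphism (being an inflation), the equation $a'\circ h\circ a=(f^{0}-\widetilde\beta\circ\alpha\circ b)\circ a=f^{0}\circ a=a'\circ f^{-1}$ yields $h\circ a=f^{-1}$. Moreover $f^{0}-a'\circ h=\widetilde\beta\circ\alpha\circ b$ factors through $P$, as required.

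I do not anticipate a serious obstacle. The only delicate point is the identification $\Ext^1_\cf(A,B)\cong \Hom_\ct(A,\Sigma B)$ used in the first part to convert rigidity of $\cm$ into splitting of the pullback conflation inside $\cf$ itself (not merely inside $\ct=\underline{\cf}$); this is a standard consequence of the fact that in a Frobenius category the suspension $\Sigma B$ can be computed from a conflation $B\to I\to \Sigma B$ with $I$ projective-injective, and it is worth making explicit. Everything else reduces to routine diagram chasing with kernels of deflations and liftings along projectives.
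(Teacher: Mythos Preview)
Your proposal is correct and follows essentially the same approach as the paper. For the first part, the paper invokes the Hom--Ext long exact sequence coming from the conflation $M'^{-1}\to M'^0\to Y$ together with $\Ext^1_\cf(M^0,M'^{-1})\cong\Hom_\ct(M^0,\Sigma M'^{-1})=0$ to lift $f\circ b$ through $b'$, while you phrase the same vanishing via the explicit pullback conflation; for the second part, your argument with $\alpha,\beta,\widetilde\beta$ is identical to the paper's with $u,v,w$, only packaged as ``construct a second lift and subtract''.
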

\begin{proof}
As $\Ext_\cf^1(M^0,M'^{-1})\cong\Hom_\ct(M^0,\Sigma M'^{-1})=0$, it follows from the Hom-Ext long exact sequence that there exists a morphism $f^0\colon M^0\to M'^0$ such that the right square is commutative. The existence of $f^{-1}$ then follows from the universal property of the kernel.

Next assume that $f$ factors through a projective-injective object $P$, \ie there are morphisms $u\colon X\to P$ and $v\colon P\to Y$ such that $f=v\circ u$ in $\cf$. Since $P$ is projective, there exists $w\colon P\to M'^0$ such that $v=b'\circ w$ in $\cf$.
\[
\xymatrix@=1.5pc{
M^{-1}\ar[rrr]^(0.4){a}\ar[ddd]^{f^{-1}} &&& M^0\ar[rrr]^(0.6){b}\ar[ddd]^{f^0} \ar@{.>}[lllddd]|h&&& X\ar[ddd]^{f}\ar@{-->}[ldd]_{u}\\
\\
&&&&&P\ar@{-->}[lld]_{w}\ar@{-->}[rd]^{v}\\
M'^{-1}\ar[rrr]^(0.4){a'} &&& M'^0\ar[rrr]^(0.6){b'} &&& Y
}
\]
Now $b'\circ (f^0-w\circ u\circ b)=b'\circ f^0-f\circ b=0$, so there exists a morphism $h\colon M^0\to M'^{-1}$ such that $f^0-w\circ u\circ b=a'\circ h$ in $\cf$. Therefore $f^0-a'\circ h$ factors through $P$. Moreover, $a'\circ(h\circ a-f^{-1})=f\circ a-a'\circ f^0=0$, so $f^{-1}=h\circ a$ in $\cf$, since $a'$ is a monomorphism.
\end{proof}

\subsection{The functor}
\label{ss:the-functor}

For any $X\in\fpr(\cm)$, fix a triangle in $\ct$
\begin{align}\label{eq:presentation}
\xymatrix{
M^{-1}_X\ar[r]^{a_X} & M^0_X\ar[r]^(0.55){b_X} & X\ar[r]^(0.4){c_X} & \Sigma M^{-1}_X
}
\end{align}
with $M^{-1}_X,M^0_X\in\cm$. 
For a morphism $f\colon X\to Y$, we call a pair $(f^{-1},f^0)$ of morphisms $f^{-1}\colon M^{-1}_X\to M^{-1}_Y$ and $f^0\colon M^0_X\to M^0_Y$ a \emph{presentation} of $f$ if there is a morphism of triangles
\begin{align}\label{eq:presentation-of-morphism}
\xymatrix{
M^{-1}_X\ar[r]^{a_X}\ar[d]^{f^{-1}} & M^0_X\ar[r]^(0.55){b_X}\ar[d]^{f^0} & X\ar[r]^(0.4){c_X}\ar[d]^{f} & \Sigma M^{-1}_X\ar[d]^{\Sigma f^{-1}}\\
M^{-1}_Y\ar[r]^{a_Y} & M^0_Y\ar[r]^(0.55){b_Y} & X\ar[r]^(0.4){c_Y} & \Sigma M^{-1}_Y.
}
\end{align}
It is said to be \emph{liftable} if, roughly speaking, the above morphism of triangles lifts to a morphism of conflations in the underlying Frobenius category. The rigorous definition will be given in the beginning of the proof below.

\begin{proposition}
\label{prop:from-fpr-to-2term}
For an object $X\in \fpr(\cm)$, define $\mathbb{P}(X)$ as the 2-term complex $(M^{-1}_X\stackrel{a_X}{\longrightarrow} M^0_X)$; for a morphism $f\colon X\to Y$ of $\fpr(\cm)$, define $\mathbb{P}(f)$ as the homotopy class of a liftable presentation of $f$. Then $\mathbb{P}\colon \fpr(\cm)\to \ch^{[-1,0]}(\cm)$ is a dense $k$-linear functor. 
\end{proposition}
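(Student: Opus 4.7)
The plan is to work in a Frobenius category $\cf$ with $\ct=\underline{\cf}$, lift the chosen triangles to conflations in $\cf$, and use Lemma~\ref{lem:lifting-morphisms} to obtain the functor. The key preliminary step is to fix, for each $X\in\fpr(\cm)$, a conflation $M^{-1}_X\rightarrowtail M^0_X\twoheadrightarrow X$ in $\cf$ whose image in $\ct$ is the selected triangle \eqref{eq:presentation} (possibly after absorbing projective-injectives into $M^0_X$). A \emph{liftable presentation} of $f\colon X\to Y$ is then a pair $(f^{-1},f^0)$ arising, via Lemma~\ref{lem:lifting-morphisms}, from some lift $\tilde f\colon X\to Y$ of $f$ in $\cf$. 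Existence is guaranteed by that lemma.

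The heart of the proof is showing that $\mathbb{P}(f)$, defined as the homotopy class of a liftable presentation, is well-defined. Two sources of ambiguity must be separated. First, for a fixed lift $\tilde f$, the pair $(f^{-1},f^0)$ satisfying diagram~\eqref{eq:presentation-of-morphism} is itself not unique: if $(f^{-1}_1,f^0_1)$ and $(f^{-1}_2,f^0_2)$ both fit, then $b_Y\circ(f^0_1-f^0_2)=0$, so $f^0_1-f^0_2=a_Y\circ h$ for some $h\colon M^0_X\to M^{-1}_Y$, and using that $a_Y$ is a monomorphism in $\cf$ together with $a_Y\circ(f^{-1}_1-f^{-1}_2-h\circ a_X)=0$ gives $f^{-1}_1-f^{-1}_2=h\circ a_X$; thus $h$ is a chain homotopy. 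Second, two lifts $\tilde f,\tilde f'$ of the same $f$ differ by a morphism factoring through a projective-injective object, and the second half of Lemma~\ref{lem:lifting-morphisms} produces, from any such difference, a morphism $h\colon M^0_X\to M^{-1}_Y$ witnessing that the associated difference of presentations is null-homotopic. Combining these two reductions gives that $\mathbb{P}(f)$ depends only on $f$.

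Functoriality, $k$-linearity, and density then fall out quickly. For composition, given $f\colon X\to Y$ and $g\colon Y\to Z$, any lifts $\tilde f,\tilde g$ in $\cf$ compose to a lift $\tilde g\circ\tilde f$ of $g\circ f$, and componentwise composition of the presentations supplied by Lemma~\ref{lem:lifting-morphisms} is again a liftable presentation, so $\mathbb{P}(g\circ f)=\mathbb{P}(g)\circ\mathbb{P}(f)$; the identity lifts to itself and a $k$-linear combination of lifts lifts the same combination, so $\mathbb{P}$ is a $k$-linear functor. For density, an arbitrary 2-term complex $(M^{-1}\xrightarrow{a}M^0)$ in $\ch^{[-1,0]}(\cm)$ can be completed to a triangle $M^{-1}\to M^0\to X\to\Sigma M^{-1}$ in $\ct$, which places $X$ in $\fpr(\cm)$; taking this triangle as the chosen presentation of $X$ yields $\mathbb{P}(X)=(M^{-1}\xrightarrow{a}M^0)$ on the nose.

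The main obstacle is the well-definedness step, specifically showing that changing the $\cf$-lift of $f$ does not change the homotopy class of the presentation. This is precisely what the second clause of Lemma~\ref{lem:lifting-morphisms} is designed to deliver, but carrying it out cleanly requires a careful alignment between the chosen triangles in $\ct$ and the chosen conflations in $\cf$; in particular, one must absorb any projective-injective summands appearing in the lifting process into $M^0_X$ consistently so that the presentations glue to a genuine functor rather than merely a map of isomorphism classes.
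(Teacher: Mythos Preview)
Your approach is essentially the same as the paper's: work in a Frobenius model $\cf$ with $\ct=\underline{\cf}$, lift the fixed triangles to conflations, and use Lemma~\ref{lem:lifting-morphisms} both for existence of presentations and for well-definedness. The paper is a bit more explicit about the bookkeeping: rather than saying ``absorb projective-injectives into $M^0_X$,'' it forms the inflation $\binom{a_X}{i(M^{-1}_X)}\colon M^{-1}_X\to M^0_X\oplus I(M^{-1}_X)$, takes its cokernel $\widetilde{X}$ in $\cf$, and fixes once and for all an isomorphism $\eta_X\colon\widetilde{X}\to X$ in $\ct$; a liftable presentation is then defined via a morphism of conflations between $\widetilde{X}$ and $\widetilde{Y}$ and the isomorphisms $\eta_X,\eta_Y$. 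Your informal description is equivalent but hides exactly the alignment issue you flag in your last paragraph.

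One genuine gap: your density argument does not work as stated. The presentations \eqref{eq:presentation} are fixed in advance for every $X\in\fpr(\cm)$, so given an arbitrary $(M^{-1}\xrightarrow{a}M^0)$ you cannot ``take this triangle as the chosen presentation of $X$'' --- the chosen presentation of the cone $X$ is already some other triangle $M^{-1}_X\to M^0_X\to X\to\Sigma M^{-1}_X$, and you must show the two 2-term complexes are isomorphic in $\ch^{[-1,0]}(\cm)$. This is not automatic; it requires the same machinery as well-definedness, applied to $\id_X$ lifted in both directions between the two conflations. The paper carries this out explicitly in its step (8), producing mutually inverse homotopy equivalences.
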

\begin{proof} Let us assume that $\ct=\underline{\cf}$, where $\cf$ is a Frobenius $k$-category, as in Section~\ref{ss:lifting-morphisms}.
Denote by $\pi\colon \cf\to\ct$ the projection functor. We will not distinguish in notation a morphism in $\cf$ and its image in $\ct$.

When defining the triangle structure on $\ct$, we have fixed for any object $X$ of $\cf$ a conflation
\[
\xymatrix{
X\ar[r]^(0.45){i(X)} & I(X)\ar[r]^{p(X)} & \Sigma X
}
\]
with $I(X)$ being projective-injective, and for any morphism $f\colon X\to Y$ of $\cf$ a morphism of conflations
\[
\xymatrix{
X\ar[r]^(0.45){i(X)}\ar[d]^{f} & I(X)\ar[r]^{p(X)}\ar[d]^{I(f)} & \Sigma X\ar[d]^{\Sigma f}\\
Y\ar[r]^(0.45){i(Y)} & I(Y)\ar[r]^{p(Y)} & \Sigma Y
}
\]

Now let $X\in\fpr(\cm)$. Consider the inflation $\widetilde{a}_X={a_X\choose {i(M^{-1}_X)}}\colon M^{-1}_X\longrightarrow M^0_X\oplus I(M^{-1}_X)$ and complete it to a conflation in $\cf$ 
\[
\xymatrix{
M^{-1}_X\ar[r]^(0.35){\widetilde{a}_X} & M^0_X\oplus I(M^{-1}_X)\ar[r]^(0.7){\widetilde{b}_X} & \widetilde{X},
}
\]
which produces a triangle in $\ct$
\[
\xymatrix{
M^{-1}_X\ar[r]^(0.35){\widetilde{a}_X} & M^0_X\oplus I(M^{-1}_X)\ar[r]^(0.7){\widetilde{b}_X} & \widetilde{X}\ar[r]^(0.4){\widetilde{c}_X} & \Sigma M^{-1}_X.
}
\]
Then in $\ct$ there is an isomorphism of triangles
\begin{align}
\label{cd:eta}
\xymatrix{
M^{-1}_X\ar[r]^(0.35){\widetilde{a}_X}\ar@{=}[d] & M^0_X\oplus I(M^{-1}_X)\ar[r]^(0.7){\widetilde{b}_X} \ar[d]^{(\id,0)}& \widetilde{X} \ar[r]^(0.4){\widetilde{c}_X} \ar[d]^{\eta_X}& \Sigma M^{-1}_X\ar@{=}[d]\\
M^{-1}_X\ar[r]^{a_X} & M^0_X\ar[r]^(0.55){b_X} & X\ar[r]^(0.4){c_X} & \Sigma M^{-1}_X.
}
\end{align}
We fix such an isomorphism $\eta_X\colon \widetilde{X}\to X$ and let $\epsilon_X\colon X\to \widetilde{X}$ be its inverse in $\ct$.

Let $f\colon X\to Y$ be a morphism in $\fpr(\cm)$. A presentation $(f^{-1},f^0)$ of $f$ is said to be \emph{liftable} if there is a morphism of conflations in $\cf$
\begin{align}\label{cd:liftable-presentation}
\xymatrix{
M^{-1}_X\ar[r]^(0.35){\widetilde{a}_X}\ar[d]^{f^{-1}} & M^0_X\oplus I(M^{-1}_X)\ar[r]^(0.7){\widetilde{b}_X}\ar[d]^{\widetilde{f}^0} & \widetilde{X}\ar[d]^{\widetilde{f}}\\
M^{-1}_Y\ar[r]^(0.35){\widetilde{a}_Y} & M^0_Y\oplus I(M^{-1}_Y)\ar[r]^(0.7){\widetilde{b}_Y} & \widetilde{Y}
}
\end{align}
such that the following diagram is commutative in $\ct$
\[
\xymatrix@=1pc{
&M^{-1}_X\ar[rrr]|{a_X}\ar@{.>}[ddd]|(0.65){f^{-1}} &&& M^0_X\ar[rrr]|{b_X}\ar@{.>}[ddd]|(0.65){f^0} &&& X\ar[rrr]|{c_X} \ar@{.>}[ddd]|(0.65){f}&&& \Sigma M^{-1}_X\ar[ddd]|(0.65){\Sigma f^{-1}}\\
M^{-1}_X\ar[rrr]|{\widetilde{a}_X}\ar[ddd]|(0.35){f^{-1}}\ar@{=}[ur] &&& M^0_X\oplus I(M^{-1}_X)\ar[rrr]|(0.7){\widetilde{b}_X}\ar[ddd]|(0.35){\widetilde{f}^0} \ar[ur]|{(\id,0)}&&& \widetilde{X}\ar[ddd]|(0.35){\widetilde{f}}\ar[ur]|{\eta_X} \ar[rrr]|(0.55){\widetilde{c}_X} &&& \Sigma M^{-1}_X\ar@{=}[ur]\ar[ddd]|(0.35){\Sigma f^{-1}}\\
\\
&M^{-1}_Y\ar@{.>}[rrr]|(0.4){a_Y} &&& M^0_Y\ar@{.>}[rrr]|(0.4){b_Y} &&& Y\ar@{.>}[rrr]|(0.4){c_Y} &&& \Sigma M^{-1}_Y\\
M^{-1}_Y\ar[rrr]|(0.4){\widetilde{a}_Y}\ar@<.2ex>@{.}[ur]\ar@<-.2ex>@{.}[ur] &&& M^0_Y\oplus I(M^{-1}_Y)\ar[rrr]|(0.6){\widetilde{b}_Y}\ar@{.>}[ur]|{(\id,0)} &&& \widetilde{Y}\ar@{.>}[ur]|{\eta_Y}\ar[rrr]|{\widetilde{c}_Y} &&& \Sigma M^{-1}_Y\ar@{=}[ur]
}
\]

(1) \emph{Every morphism in $\fpr(\cm)$ admits a liftable presentation.} Let $f\colon X\to Y$ be a morphism in $\fpr(\cm)$. Let $\widetilde{f}\colon \widetilde{X}\to\widetilde{Y}$ be a morphism in $\cf$ such that $f=\eta_Y\circ\widetilde{f}\circ\epsilon_X$ in $\ct$.
By the first statement of Lemma~\ref{lem:lifting-morphisms} there is a morphism of conflations
\begin{align*}
\xymatrix{
M^{-1}_X\ar[r]^(0.35){\widetilde{a}_X}\ar[d]^{f^{-1}} & M^0_X\oplus I(M^{-1}_X)\ar[r]^(0.7){\widetilde{b}_X}\ar[d]^{\widetilde{f}^0} & \widetilde{X}\ar[d]^{\widetilde{f}}\\
M^{-1}_Y\ar[r]^(0.35){\widetilde{a}_Y} & M^0_Y\oplus I(M^{-1}_Y)\ar[r]^(0.7){\widetilde{b}_Y} & \widetilde{Y}
}
\end{align*}
Then $(f^{-1},(\id,0)\circ\widetilde{f}^0\circ{\id\choose 0})$ is a liftable presentation of $f$.

(2)  \emph{A liftable presentation of the zero morphism is null-homotopic.} Let $f=0\colon X\to Y$ and let $(f^{-1},f^0)$ be a liftable presentation of $f$. Consider the diagram \eqref{cd:liftable-presentation}. The morphism $\widetilde{f}$ factors through a projective-injective object $P$, so by the second statement of Lemma~\ref{lem:lifting-morphisms} there exists a morphism $h\colon M^0_X\oplus I(M^{-1}_X)\to M^{-1}_Y$ in $\cf$ such that $\widetilde{f}^0-\widetilde{a}_Y\circ h$ factors through $P$ and $f^{-1}=h\circ\widetilde{a}_X$ in $\cf$. Therefore in $\ct$ we have $f^0=a_Y\circ h$ and $f^{-1}=h\circ a_X$, showing that $(f^{-1},f^0)$ is null-homotopic. 

(3) \emph{Let $(f^{-1},f^0)$ be a liftable presentation of $f$ and $\lambda\in k$. Then $(\lambda f^{-1},\lambda f^0)$ is a liftable presentation of $\lambda f$.} This is clear.

(4) \emph{Let $(f^{-1},f^0)$ be a liftable presentation of $f\colon X\to Y$ and $(g^{-1},g^0)$ be a liftable presentation of $g\colon X\to Y$. Then $(f^{-1}+g^{-1},f^0+g^0)$ is a liftable presentation of $f+g\colon X\to Y$.} This is clear.

(5) \emph{$\mathbb{P}$ is well-defined on morphisms.} Let $f\colon X\to Y$ be a morphism in $\fpr(\cm)$ and let $(f^{-1},f^0)$ and $(f'^{-1},f'^0)$ be two liftable presentations of $f$. Then $(f^{-1}-f'^{-1},f^0-f'^0)$ is a liftable presentation of the zero morphism by (3) and (4) , so by (2) it is null-homotopic, \ie $(f^{-1},f^0)$ and $(f'^{-1},f'^0)$ are in the same homotopy class.

(6) \emph{Let $(f^{-1},f^0)$ be a liftable presentation of $f\colon X\to Y$ and $(g^{-1},g^0)$ be a liftable presentation of $g\colon Y\to Z$. Then $(g^{-1}\circ f^{-1},g^0\circ f^0)$ is a liftable presentation of $g\circ f\colon X\to Z$.} This is clear.

(7) \emph{$\mathbb{P}$ is a $k$-linear functor.} This follows from (3) (4) and (5) together with the fact that $\mathbb{P}(\id_X)=\id_{\mathbb{P}(X)}$.

(8) \emph{$\mathbb{P}$ is dense.} For a complex $Y=(M^{-1}\stackrel{a}{\to}M^0)$ in $\ch^{[-1,0]}(\cm)$, complete the inflation $ \widetilde{a}={a\choose {i(M^{-1})}}\colon M^{-1}\longrightarrow M^0\oplus I(M^{-1})$ to a conflation in $\cf$ 
\[
\xymatrix{
M^{-1}\ar[r]^(0.35){\widetilde{a}} & M^0\oplus I(M^{-1})\ar[r]^(0.7){\widetilde{b}} & X.
}
\]
By Lemma~\ref{lem:lifting-morphisms}, there are morphisms of conflations
\[
\xymatrix{
M^{-1}\ar[r]^(0.35){\widetilde{a}}\ar[d]^{f^{-1}} & M^0\oplus I(M^{-1})\ar[r]^(0.7){\widetilde{b}} \ar[d]^{\widetilde{f}^0}& X\ar[d]^{\epsilon_X}\\
M^{-1}_X\ar[r]^(0.35){\widetilde{a}_X}\ar[d]^{g^{-1}} & M^0_X\oplus I(M^{-1}_X)\ar[r]^(0.7){\widetilde{b}_X} \ar[d]^{\widetilde{g}_0} & \widetilde{X}\ar[d]^{\eta_X}\\
M^{-1}\ar[r]^(0.35){\widetilde{a}} & M^0\oplus I(M^{-1})\ar[r]^(0.7){\widetilde{b}} & X,
}
\]
and thus a morphism of conflations
\[
\xymatrix{
M^{-1}\ar[r]^(0.35){\widetilde{a}}\ar[d]^{g^{-1}f^{-1}} & M^0\oplus I(M^{-1})\ar[r]^(0.7){\widetilde{b}} \ar[d]^{\widetilde{g}^0\widetilde{f}^0}& X\ar[d]^{\eta_X\epsilon_X}\\
M^{-1}\ar[r]^(0.35){\widetilde{a}} & M^0\oplus I(M^{-1})\ar[r]^(0.7){\widetilde{b}} & X.
}
\]
Since $\eta_X\epsilon_X=\id$ in $\ct$, \ie $\id-\eta_X\epsilon_X$ factors in $\cf$ through a projective-injective object, it follows by the second statement of Lemma~\ref{lem:lifting-morphisms} that there exists a morphism $h\colon M^0\oplus I(M^{-1})\to M^{-1}$
such that $\id-g^{-1}f^{-1}=h\widetilde{a}$ and $\id-\widetilde{g}^0\widetilde{f}^0=\widetilde{a}h$ in $\ct$. Put $f^0=(\id,0)\widetilde{f}^0{\id\choose 0}\colon M^0\to M^0_X$ and $g^0=(\id,0)\widetilde{g}^0{\id\choose 0}\colon M^0_X\to M^0$. Then $(g^{-1}f^{-1},g^0f^0)$ is homotopy equivalent to $\id$. Similarly,  $(f^{-1}g^{-1},f^0g^0)$ is homotopy equivalent to $\id$. It follows that $Y\cong\mathbb{P}(X)$ in $\ch^{[-1,0]}(\cm)$.
\end{proof}

\begin{remark}
\label{rem:independence-of-the-choice-of-presentation}
Up to isomorphism the functor $\mathbb{P}$ depends neither on the choice of the presentation \eqref{eq:presentation} nor on the choice of the isomorphism $\eta_X\colon \widetilde{X}\to X$. Therefore in practice we often take the following presentations for $M\in\cm$
\[
\xymatrix@R=0.5pc{
0\ar[r] & M\ar[r]^{\id} & M\ar[r] & 0\\
M\ar[r] & 0\ar[r] & \Sigma M\ar[r]^{\id} & \Sigma M,
}
\]
and if necessary we can take minimal presentations for all objects in $\fpr(\cm)$.

Let us prove the above statement. Fix another presentation 
\begin{align*}
\xymatrix{
M'^{-1}_X\ar[r]^{a'_X} & M'^0_X\ar[r]^(0.55){b'_X} & X\ar[r]^(0.4){c'_X} & \Sigma M'^{-1}_X,
}
\end{align*}
which produces a conflation in $\cf$ 
\[
\xymatrix{
M'^{-1}_X\ar[r]^(0.35){\widetilde{a'}_X} & M'^0_X\oplus I(M'^{-1}_X)\ar[r]^(0.7){\widetilde{b'}_X} & \widetilde{X}'
}
\]
and a triangle in $\ct$
\[
\xymatrix{
M'^{-1}_X\ar[r]^(0.35){\widetilde{a'}_X} & M'^0_X\oplus I(M'^{-1}_X)\ar[r]^(0.7){\widetilde{b'}_X} & \widetilde{X}'\ar[r]^(0.4){\widetilde{c'}_X} & \Sigma M'^{-1}_X,
}
\]
and fix an isomorphism $\eta'_X\colon\widetilde{X}'\to X$ in $\ct$ to construct another functor $\mathbb{P}'\colon \fpr(\cm)\to \ch^{[-1,0]}(\cm)$.
Consider the morphism $\epsilon_X\circ \eta'_X\colon \widetilde{X}'\to \widetilde{X}$ in $\cf$. There is a morphism of conflations in $\cf$ by Lemma~\ref{lem:lifting-morphisms}
\[
\xymatrix{
M'^{-1}_X\ar[r]^(0.35){\widetilde{a'}_X}\ar[d]^{\varphi^{-1}_X} & M'^0_X\oplus I(M'^{-1}_X)\ar[r]^(0.7){\widetilde{b'}_X}\ar[d]^{\widetilde{\varphi}^0_X} & \widetilde{X}'\ar[d]^{\epsilon_X\circ \eta'_X}\\
M^{-1}_X\ar[r]^(0.35){\widetilde{a}_X} & M^0_X\oplus I(M^{-1}_X)\ar[r]^(0.7){\widetilde{b}_X} & \widetilde{X}.
}
\]
Put $\varphi^0_X=(\id,0)\circ \widetilde{\varphi}^0_X\circ {\id\choose 0}$. Then there is a morphism of triangles
\begin{align}
\label{cd:phi}
\xymatrix{
M'^{-1}_X\ar[r]^(0.45){a'_X}\ar[d]^{\varphi^{-1}_X} & M'^0_X\ar[r]^(0.55){b'_X}\ar[d]^{\varphi^0_X} & X\ar@{=}[d] \ar[r]^(0.45){c'_X} &\Sigma M'^{-1}_X\ar[d]^{\Sigma\varphi^{-1}_X}\\
M^{-1}_X\ar[r]^(0.45){a_X} & M^0_X\ar[r]^(0.55){b_X} & X\ar[r]^(0.45){c_X} & \Sigma M^{-1}_X.
}
\end{align}
Define $\varphi_X=(\varphi^{-1}_X,\varphi^0_X)\colon\mathbb{P}'(X)\to\mathbb{P}(X)$. We claim that $\varphi=(\varphi_X)_X\colon\mathbb{P}'\to\mathbb{P}$ is an isomorphism.

(1) \emph{$\varphi_X$ is functorial in $X\in\fpr(\cm)$.} Let $f\colon X\to Y$ be a morphism in $\fpr(\cm)$. Then there is a diagram in $\cf$ which is commutative except the three vertical squares 
\[
\xymatrix@=1pc{
&M'^{-1}_X\ar[rrr]|(0.35){\widetilde{a'}_X}\ar[dl]|{\varphi^{-1}_X}\ar@{.>}[ddd]|(0.65){f'^{-1}} &&& M'^0_X\oplus I(M'^{-1}_X)\ar[rrr]|(0.6){\widetilde{b'}_X}\ar[dl]|{\widetilde{\varphi^0_X}}\ar@{.>}[ddd]|(0.65){\widetilde{f}'^0} &&& \widetilde{X}'\ar[dl]|{\epsilon_X\circ \eta'_X}\ar[ddd]|(0.65){\widetilde{f}'}\\
M^{-1}_X\ar[rrr]|{\widetilde{a}_X}\ar[ddd]|(0.35){f^{-1}} &&& M^0_X\oplus I(M^{-1}_X)\ar[rrr]|(0.7){\widetilde{b}_X} \ar[ddd]|(0.35){\widetilde{f}^0}&&& \widetilde{X}\ar[ddd]|(0.35){\widetilde{f}}\\
\\
&M'^{-1}_Y\ar@{.>}[rrr]|(0.35){\widetilde{a'}_Y}\ar@{.>}[dl]|{\varphi^{-1}_Y} &&& M'^0_Y\oplus I(M'^{-1}_Y)\ar@{.>}[rrr]|(0.6){\widetilde{b'}_Y}\ar@{.>}[dl]|{\widetilde{\varphi}^0_Y} &&& \widetilde{Y}'\ar[dl]|{\epsilon_Y\circ \eta'_Y}\\
M^{-1}_Y\ar[rrr]|{\widetilde{a}_Y} &&& M^0_Y\oplus I(M^{-1}_Y)\ar[rrr]|(0.7){\widetilde{b}_Y} &&& \widetilde{Y}
}
\]
The right vertical square is commutative in $\ct$, so by Lemma~\ref{lem:lifting-morphisms} there exists a morphism $h\colon M'^0_X\oplus I(M'^{-1}_X)\to M^{-1}_Y$ such that $\widetilde{f}^0\circ\widetilde{\varphi}^0_X-\widetilde{\varphi}^0_Y\circ\widetilde{f}'^0=\widetilde{a}_Y\circ h$ and $f^{-1}\circ\varphi^{-1}_X-\varphi^{-1}_Y\circ f'^{-1}=h\circ \widetilde{a'}_X$ in $\ct$. Therefore the following diagram is commutative in $\ch^{[-1,0]}(\cm)$
\[
\xymatrix@=1pc{
&M'^{-1}_X\ar[rrr]|(0.35){a'_X}\ar[dl]|{\varphi^{-1}_X}\ar@{.>}[ddd]|(0.65){f'^{-1}} &&& M'^0_X\ar[dl]|{\varphi^0_X}\ar[ddd]|(0.65){f'^0}\\
M^{-1}_X\ar[rrr]|{a_X}\ar[ddd]|(0.35){f^{-1}} &&& M^0_X\ar[ddd]|(0.35){f^0}\\
\\
&M'^{-1}_Y\ar@{.>}[rrr]|(0.35){a'_Y}\ar@{.>}[dl]|{\varphi^{-1}_Y} &&& M'^0_Y\ar[dl]|{\varphi^0_Y} \\
M^{-1}_Y\ar[rrr]|{{a}_Y} &&& M^0_Y
}
\]
showing the functoriality of $\varphi$.

(2) \emph{$\varphi_X$ is an isomorphism for any $X\in\fpr(\cm)$.} Consider the morphism $\epsilon'_X\circ \eta_X\colon \widetilde{X}\to \widetilde{X}'$ in $\cf$. There is a morphism of conflations in $\cf$ by Lemma~\ref{lem:lifting-morphisms}
\[
\xymatrix{
M^{-1}_X\ar[r]^(0.35){\widetilde{a}_X}\ar[d]^{\psi^{-1}_X} & M^0_X\oplus I(M^{-1}_X)\ar[r]^(0.7){\widetilde{b}_X} \ar[d]^{\widetilde{\psi}^0_X} & \widetilde{X}\ar[d]^{\epsilon'_X\circ \eta_X}\\
M'^{-1}_X\ar[r]^(0.35){\widetilde{a'}_X} & M'^0_X\oplus I(M'^{-1}_X)\ar[r]^(0.7){\widetilde{b'}_X}& \widetilde{X}'.
}
\]
Put $\psi^0_X=(\id,0)\circ \widetilde{\psi}^0_X\circ {\id\choose 0}$ and $\psi_X=(\psi^{-1}_X,\psi^0_X)\colon \mathbb{P}'(X)\to\mathbb{P}(X)$. Just as in (8) of the proof of Proposition~\ref{prop:from-fpr-to-2term}, we can show that $\varphi_X\circ\psi_X=\id$ in $\ch^{[-1,0]}(\cm)$ and $\psi_X\circ\varphi_X=\id$ in $\ch^{[-1,0]}(\cm)$. So $\varphi_X$ is an isomorphism.
\end{remark}

\subsection{Extensions (of degree $1$)}
\label{ss:P-detects-the-extriangle-structure}

In this subsection we show that the functor $\mathbb{P}$ detects extriangle structures.

\smallskip

First we have the following special case of Lemma~\ref{lem:taking-presentation-preserves-extension}.

\begin{lemma}
\label{lem:taking-presentation-preserves-extension-2}
The assignment $f\mapsto \Sigma b_Y\circ\Sigma f\circ c_X$ induces a bijection
\[
\Hom_{\ch^b(\cm)}(\mathbb{P}(X),\Sigma \mathbb{P}(Y))\longrightarrow [\Sigma\cm](X,\Sigma Y).
\]
For $g\in [\Sigma\cm](X,\Sigma Y)$, we denote by $\mathbb{P}(g)$ its preimage under this bijection.
\end{lemma}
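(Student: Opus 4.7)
The plan is very short because the statement is explicitly flagged as a special case of Lemma~\ref{lem:taking-presentation-preserves-extension}, which has already been proved in the excerpt. First I would unpack the definitions: by the construction in Section~\ref{ss:the-functor}, for every object $X\in\fpr(\cm)$ we have fixed a triangle
\[
\eta_X\colon \xymatrix{M^{-1}_X\ar[r]^{a_X} & M^0_X\ar[r]^{b_X} & X\ar[r]^{c_X} & \Sigma M^{-1}_X},
\]
and by definition $\mathbb{P}(X)=(M^{-1}_X\stackrel{a_X}{\longrightarrow} M^0_X)=t(\eta_X)$, and likewise $\mathbb{P}(Y)=t(\eta_Y)$. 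Thus the assignment $f\mapsto \Sigma b_Y\circ\Sigma f\circ c_X$ in the statement is literally the map of Lemma~\ref{lem:taking-presentation-preserves-extension} with $\eta=\eta_X$ and $\eta'=\eta_Y$.

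Consequently I would simply invoke that lemma to conclude that the map is a bijection, and point out that the last sentence of the statement is just a notational convention fixing the inverse map. There is no real obstacle to overcome, and no further computation is needed; the three substeps of the general lemma (well-definedness on homotopy classes, surjectivity via lifting factorizations through $\Sigma\cm$, and injectivity via producing a null-homotopy) were carried out in the proof of Lemma~\ref{lem:taking-presentation-preserves-extension} already.

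If desired, I would also remark that independence of the bijection from the specific choice of the presentation $\eta_X$ (and thus compatibility with the functor $\mathbb{P}$, which is only defined up to isomorphism as noted in Remark~\ref{rem:independence-of-the-choice-of-presentation}) is automatic: two different choices of presentation yield isomorphic 2-term complexes, and the map defined by $f\mapsto \Sigma b\circ\Sigma f\circ c$ is natural in the presentation. So the displayed bijection transports along these isomorphisms, and the notation $\mathbb{P}(g)$ for the preimage of $g\in[\Sigma\cm](X,\Sigma Y)$ is well-defined at the level of the functor $\mathbb{P}$.
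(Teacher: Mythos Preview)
Your proposal is correct and matches the paper's approach exactly: the paper gives no separate proof but simply states that this lemma is a special case of Lemma~\ref{lem:taking-presentation-preserves-extension}, which is precisely what you do by identifying $\mathbb{P}(X)=t(\eta_X)$ and $\mathbb{P}(Y)=t(\eta_Y)$. Your closing remark about independence of the chosen presentation is also in line with the paper, which treats this compatibility in the remark immediately following the lemma.
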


We will use the following two remarks implicitly.

\begin{remark}
Keep the notation in Remark~\ref{rem:independence-of-the-choice-of-presentation}. We claim that the isomorphism $\varphi\colon\mathbb{P}'\to\mathbb{P}$ is compatible with the bijection in Lemma~\ref{lem:taking-presentation-preserves-extension-2}, \ie for any morphism $g\colon X\to\Sigma Y$ in $\fpr(\cm)$, the following diagram is commutative
\[
\xymatrix{
\mathbb{P}'(X)\ar[r]^{\varphi_X} \ar[d]^{\mathbb{P}'(g)}& \mathbb{P}(X)\ar[d]^{\mathbb{P}(g)}\\
\Sigma\mathbb{P}'(Y)\ar[r]^{\Sigma\varphi_Y} & \Sigma\mathbb{P}(Y).
}
\]
We have
\begin{align*}
\Sigma b_Y\circ \Sigma(\mathbb{P}(g)\circ\varphi^{-1}_X)\circ c'_X&=\Sigma b_Y\circ \Sigma\mathbb{P}(g)\circ (\Sigma \varphi^{-1}_X\circ c'_X)\stackrel{\eqref{cd:phi}}{=}\Sigma b_Y\circ\Sigma\mathbb{P}(g)\circ c_X=g,\\
\Sigma b_Y\circ \Sigma(\varphi^0_X\circ\mathbb{P}'(g))\circ c'_X&=\Sigma (b_Y\circ \varphi^0_X)\circ\Sigma\mathbb{P}'(g)\circ c'_X\stackrel{\eqref{cd:phi}}{=}\Sigma b'_Y\circ\Sigma\mathbb{P}'(g)\circ c'_X=g.
\end{align*}
By Lemma~\ref{lem:taking-presentation-preserves-extension} we have $\mathbb{P}(g)\circ\varphi^{-1}_X=\varphi^0_X\circ\mathbb{P}'(g)$, namely, the above diagram is commutative.
\end{remark}

\begin{remark}
Let $X,Y,Z\in\fpr(\cm)$ and $g\in [\Sigma\cm](X,\Sigma Y)$, $h\in\Hom_\ct(Z,X)$. We claim that $\mathbb{P}(g\circ h)=\mathbb{P}(g)\circ\mathbb{P}(h)$. Let $(h^{-1},h^0)=\mathbb{P}(h)$ and $f=\mathbb{P}(g)$. Then we have the following diagram
\[
\xymatrix{
M^{-1}_Z\ar[r] \ar[d]^{h^{-1}}& M^0_Z\ar[r]\ar[d]^{h^0} & Z\ar[r]^(0.4){c_Z}\ar[d]^h & \Sigma M^{-1}_Z \ar[d]^{\Sigma h^{-1}}\\
M^{-1}_X\ar[r] & M^0_X\ar[r] & X\ar[r]^(0.4){c_X} \ar[d]|(0.3){g} & \Sigma M^{-1}_X\ar@{..>}[dll]|(0.3){\Sigma f}\\
\Sigma M^{-1}_Y\ar[r] & \Sigma M^0_Y\ar[r]_(0.55){-\Sigma b_Y} & \Sigma Y\ar[r]_(0.45){-\Sigma c_Y} & \Sigma^2 M^{-1}_Y.
}
\]
So $g\circ h=\Sigma b_Y\circ \Sigma f \circ c_X\circ h=\Sigma b_Y\circ \Sigma f\circ \Sigma h^{-1}\circ c_Z=\Sigma b_Y\circ \Sigma (f\circ h^{-1})\circ c_Z$. Note that $f\circ h^{-1}=\mathbb{P}(g)\circ \mathbb{P}(h)$. The claim follows.
\end{remark}

\begin{proposition}
\label{prop:P-detects-extriangles}
The functor $\mathbb{P}$ detects extriangles. Precisely, if $\xymatrix@=0.9pc{Y\ar[r]^u & Z\ar[r]^v & X\ar[r]^w &\Sigma Y}$ is an extriangle in $\fpr(\cm)$, then $\xymatrix@=1.2pc{\mathbb{P}(Y)\ar[r]^{\mathbb{P}(u)} & \mathbb{P}(Z)\ar[r]^{\mathbb{P}(v)} & \mathbb{P}(X)\ar[r]^{\mathbb{P}(w)} &\Sigma \mathbb{P}(Y)}$ is an extriangle in $\ch^{[-1,0]}(\cm)$; up to isomorphism every extriangle in $\ch^{[-1,0]}(\cm)$ can be uniquely obtained in this way. 
\end{proposition}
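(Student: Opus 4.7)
The plan is to reduce both directions to the horseshoe-type construction already developed in Section~\ref{s:the-category-of-finitely-presented-objects} and to exploit the bijection in Lemma~\ref{lem:taking-presentation-preserves-extension-2}. Because $\mathbb{P}$ is, up to natural isomorphism, independent of the chosen presentations (Remark~\ref{rem:independence-of-the-choice-of-presentation}), I may, when convenient, work with one particular choice of presentation for each object.

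For the forward direction, start with an extriangle $Y\xrightarrow{u}Z\xrightarrow{v}X\xrightarrow{w}\Sigma Y$ in $\fpr(\cm)$, choose presentations $M^{-1}_Y\xrightarrow{a_Y}M^0_Y\xrightarrow{b_Y}Y\xrightarrow{c_Y}\Sigma M^{-1}_Y$ and $M^{-1}_X\xrightarrow{a_X}M^0_X\xrightarrow{b_X}X\xrightarrow{c_X}\Sigma M^{-1}_X$, and use Lemma~\ref{lem:taking-presentation-preserves-extension-2} to factor $w=\Sigma b_Y\circ\Sigma f\circ c_X$ for a morphism $f\colon M^{-1}_X\to M^0_Y$; by definition $\mathbb{P}(w)$ is the homotopy class of $f$. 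Next apply the octahedral axiom (as in the commented-out proof of Lemma~\ref{lem:the-extriangle-structure}) to produce a compatible presentation
\[
\xymatrix@C=3pc{
M^{-1}_Y\oplus M^{-1}_X\ar[r]^{\scriptsize\begin{pmatrix} a_Y & -f\\ 0 & a_X\end{pmatrix}} & M^0_Y\oplus M^0_X\ar[r] & Z\ar[r] & \Sigma(M^{-1}_Y\oplus M^{-1}_X)
}
\]
of $Z$, in which the morphisms to and from $Z$ fit into a morphism of triangles where $u$ is presented by the componentwise inclusions $(\id,0)$ and $v$ by the componentwise projections $(0,\id)$. Take this as $\mathbb{P}(Z)$; the resulting sequence $\mathbb{P}(Y)\to\mathbb{P}(Z)\to\mathbb{P}(X)\to\Sigma\mathbb{P}(Y)$ is then literally the standard cone triangle on $\mathbb{P}(w)$ in $\ch^b(\cm)$, and all three terms lie in $\ch^{[-1,0]}(\cm)$, so it is an extriangle.

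For the converse, let $\mathbb{P}(Y')\to N\to\mathbb{P}(X')\xrightarrow{\alpha}\Sigma\mathbb{P}(Y')$ be an extriangle in $\ch^{[-1,0]}(\cm)$ (after absorbing isomorphisms and using density of $\mathbb{P}$). Apply Lemma~\ref{lem:taking-presentation-preserves-extension-2} again to get $w\in[\Sigma\cm](X',\Sigma Y')$ with $\mathbb{P}(w)=\alpha$, and complete $w$ to a triangle $Y'\to Z'\to X'\xrightarrow{w}\Sigma Y'$ in $\ct$; by Lemma~\ref{lem:the-extriangle-structure} this is an extriangle in $\fpr(\cm)$, and by the first part its $\mathbb{P}$-image is the cone triangle of $\mathbb{P}(w)=\alpha$, which is isomorphic to the given extriangle. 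For uniqueness, suppose two extriangles $Y\to Z_i\to X\xrightarrow{w_i}\Sigma Y$ $(i=1,2)$ in $\fpr(\cm)$ have isomorphic $\mathbb{P}$-images via a triple of isomorphisms restricting to $\id_{\mathbb{P}(Y)}$ and $\id_{\mathbb{P}(X)}$; then $\mathbb{P}(w_1)=\mathbb{P}(w_2)$ in $\ch^b(\cm)$, hence $w_1=w_2$ by the injectivity in Lemma~\ref{lem:taking-presentation-preserves-extension-2}, and so the two triangles in $\ct$ are isomorphic by the usual triangulated axioms (TR3).

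The main delicate point will be step two of the forward direction: verifying that the $\mathbb{P}$ of the morphisms $u$ and $v$ in the horseshoe construction really are the expected componentwise inclusion and projection in $\ch^{[-1,0]}(\cm)$, which requires checking that these presentations are liftable in the sense of Section~\ref{ss:the-functor}. This is essentially a lift of the octahedron to a diagram of conflations in the Frobenius model $\cf$, which is available because the presentations come from inflations and Lemma~\ref{lem:lifting-morphisms} provides the necessary lifting of morphisms between them; once this bookkeeping is done, the rest of the argument proceeds as described.
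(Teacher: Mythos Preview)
Your plan matches the paper's proof in overall shape: both use the horseshoe presentation of $Z$ together with Lemma~\ref{lem:taking-presentation-preserves-extension-2}, and your converse and uniqueness arguments are essentially identical to the paper's part~(b). The one organisational difference is in how the forward direction is executed. You propose to run the octahedral axiom in $\ct$ (as in the commented-out argument for Lemma~\ref{lem:the-extriangle-structure}) and then verify after the fact that the resulting presentations of $u$ and $v$ are liftable. The paper instead replaces $\mathbb{P}$ by an isomorphic functor $\mathbb{P}'$ built directly from conflations in the Frobenius model $\cf$, and constructs the horseshoe entirely there: the given extriangle is realised as a morphism of conflations, a pull-back produces the map $\widetilde{M}^0_X\to Z$, and a $3\times 3$ diagram together with \cite[Corollary~3.2]{Buehler10} and a cokernel check shows the middle column is a conflation. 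Liftability is then automatic, since every morphism of conflations in $\cf$ is by construction a liftable presentation. Your route works as well, but the ``lift of the octahedron'' you flag in your final paragraph amounts in practice to rebuilding the horseshoe at the level of conflations in $\cf$, which is precisely what the paper does from the outset; working in $\cf$ throughout saves the separate liftability verification.
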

\begin{proof} Assume $\ct=\underline{\cf}$, where $\cf$ is a Frobenius $k$-category, as in the proof of Proposition~\ref{prop:from-fpr-to-2term}.

(a) We will define a $k$-linear functor $\mathbb{P}'\colon \fpr(\cm)\to\ch^{[-1,0]}(\cm)$ which is isomorphic to $\mathbb{P}$ and prove the statement for $\mathbb{P}'$.

Recall that $\widetilde{\cm}$ is the full subcategory of $\cf$ which has the same objects as $\cm$. Let $\fpr_{\cf}(\widetilde{\cm})$ be the full subcategory of $\cf$ consisting of objects $X$ for which there is a conflation $\xymatrix@C=1.3pc{\widetilde{M}^{-1}_X\ar[r]^{\alpha_X} & \widetilde{M}^0_X\ar[r]^{\beta_X} & X}$ with $\widetilde{M}^{-1}_X,~\widetilde{M}^0_X\in\widetilde{\cm}$. For any $X\in\fpr_{\cf}(\widetilde{\cm})$ fix such a conflation
 and define $F(X)\in\ch^{[-1,0]}(\cm)$ as the 2-term complex $(\xymatrix@C=1.3pc{\widetilde{M}^{-1}_X\ar[r]^{\alpha_X}&\widetilde{M}^0_X})$. For any morphism $f\colon X\to Y$ in $\fpr_{\cf}(\widetilde{\cm})$, by Lemma~\ref{lem:lifting-morphisms} there is a morphism of conflations
\[
\xymatrix{
\widetilde{M}^{-1}_X \ar[r]^{\alpha_X} \ar[d]^{f^{-1}} & \widetilde{M}^0_X \ar[r]^{\beta_X} \ar[d]^{f^0} & X\ar[d]^f \\
\widetilde{M}^{-1}_Y \ar[r]^{\alpha_Y} & \widetilde{M}^0_Y \ar[r]^{\beta_Y} & Y.
}
\]
By defining $F(f)$ as the homotopy class of the chain map $(f^{-1},f^0)$ we obtain a $k$-linear functor $F\colon\fpr_{\cf}(\widetilde{\cm})\to\ch^{[-1,0]}(\cm)$, just as in the proof of Proposition~\ref{prop:from-fpr-to-2term}. This functor clearly kills the projective-injective objects in $\cf$, and therefore induces a functor $\underline{F}\colon \underline{\fpr_{\cf}(\widetilde{\cm})}\to\ch^{[-1,0]}(\cm)$. By definition $\underline{\fpr_{\cf}(\widetilde{\cm})}$ is a full subcategory of $\fpr(\cm)$, and then by the diagram~\eqref{cd:eta} in the proof of Proposition~\ref{prop:from-fpr-to-2term}, the inclusion $\underline{\fpr_{\cf}(\widetilde{\cm})}\to\fpr(\cm)$ is an equivalence.  Fix a quasi-inverse $G\colon\fpr(\cm)\to \underline{\fpr_{\cf}(\widetilde{\cm})}$, and put $\mathbb{P}'=\underline{F}\circ G\colon\fpr(\cm)\to\ch^{[-1,0]}(\cm)$. The conflation $\xymatrix@C=1.3pc{\widetilde{M}^{-1}_X\ar[r]^{\alpha_X} & \widetilde{M}^0_X\ar[r]^{\beta_X} & X}$ extends to a triangle $\xymatrix@C=1.3pc{\widetilde{M}^{-1}_X\ar[r]^{\alpha_X} & \widetilde{M}^0_X\ar[r]^(0.55){\beta_X} & X\ar[r]^(0.4){\gamma_X} & \Sigma\widetilde{M}^{-1}_X}$ in $\ct$, where $\gamma_X$ is obtained by a morphism of conflations
\[
\xymatrix@C=3pc{
\widetilde{M}^{-1}_X\ar[r]^{\alpha_X}\ar@{=}[d] & \widetilde{M}^0_X \ar[r]^{\beta_X} \ar[d]& X\ar[d]^{-\gamma_X}\\
\widetilde{M}^{-1}_X\ar[r]^(0.45){i(\widetilde{M}^{-1}_X)} & I(\widetilde{M}^{-1}_X)\ar[r]^{p(\widetilde{M}^{-1}_X)} & \Sigma\widetilde{M}^{-1}_X.
}
\]
If we use these triangles to go through the definition of $\mathbb{P}$ in the proof of Proposition~\ref{prop:from-fpr-to-2term} and  choose $\eta$ and $\epsilon$ canonically, then we obtain $\mathbb{P}'$. 
So by Remark~\ref{rem:independence-of-the-choice-of-presentation}, $\mathbb{P}'$ and $\mathbb{P}$ are isomorphic $k$-linear functors.

Now assume that the extriangle $\xymatrix@=0.9pc{Y\ar[r]^u & Z\ar[r]^v & X\ar[r]^w &\Sigma Y}$ comes from the following morphism of conflations
\[
\xymatrix{
Y\ar[r]^u \ar@{=}[d] & Z\ar[r]^v\ar[d]^z & X\ar[d]^{-w}\\
Y\ar[r]^(0.45){i(Y)} & I(Y) \ar[r]^{p(Y)} & \Sigma Y.
}
\]
In $\ct=\underline{\cf}$ the morphism $w$ factors through an object in $\Sigma\cm$ , so $w\circ \beta_X=0$ in $\ct$ because $\cm$ is rigid, namely, $w\circ\beta_X$ factors through a projective-injective object in $\cf$. Therefore it factors through $p(Y)$ to produce a morphism $f'\colon\widetilde{M}^0_X\to I(Y)$ such that $-w\circ\beta_X=p(Y)\circ f'$. Because the lower right square of \eqref{cd:push-out} is a pull-back diagram, there exists $f\colon\widetilde{M}^0_X\to Z$ such that $f'=z\circ f$ and $\beta_X=v\circ f$. It follows that there exists $g'\colon \widetilde{M}^{-1}_X\to Y$ such that the upper vertical morphisms of the following diagram \eqref{cd:push-out} form a morphism of conflations
\begin{align}
\label{cd:push-out}
\xymatrix@C=4pc{\widetilde{M}^{-1}_X\ar[r]^{\alpha_X}\ar@{-->}[d]^{g'} & \widetilde{M}^0_X \ar[r]^{\beta_X} \ar@{-->}@/^1.5pc/[dd]|(0.3){f'} \ar@{..>}[d]_f& X\ar@{=}[d]\\
Y\ar[r]^u \ar@{=}[d] & Z\ar[r]^v\ar[d]_z & X\ar[d]^{-w}\\
Y\ar[r]^{i(Y)} & I(Y) \ar[r]^{p(Y)} & \Sigma Y.
}
\end{align}
Then using the isomorphism $\Ext^1_\cf(X,Y)\cong \Hom_\ct(X,\Sigma Y)$ we know that $w=\Sigma g'\circ \gamma_X$ in $\ct$. Now because $\Ext^1_\cf(\widetilde{M}^{-1}_X,\widetilde{M}^{-1}_Y)=0$, there exists $g\colon \widetilde{M}^{-1}_X\to\widetilde{M}^0_Y$ such that $g'=\beta_Y\circ g$
\[
\xymatrix{
&&\widetilde{M}^{-1}_X\ar[d]^{g'}\ar@{-->}[ld]_g\\
\widetilde{M}^{-1}_Y \ar[r]^{\alpha_Y} & \widetilde{M}^0_Y\ar[r]^{\beta_Y} & Y.
}
\]
So $w=\Sigma\beta_Y\circ \Sigma g\circ\gamma_X$ in $\ct$. Thus by Lemma~\ref{lem:taking-presentation-preserves-extension-2} applied to $\mathbb{P}'$, we have $g=\mathbb{P}'(w)$.

Moreover, we obtain two morphisms of conflations
\[
\xymatrix@C=4pc{
\widetilde{M}^{-1}_Y\ar[r]^{0\choose \id} \ar[d]^{\alpha_Y} & \widetilde{M}^{-1}_X \oplus\widetilde{M}^{-1}_Y \ar[r]^{(\id,0)} \ar[d]^{\begin{psmallmatrix}\alpha_X & 0\\ -g & \alpha_Y\end{psmallmatrix}} & \widetilde{M}^{-1}_X\ar[d]^{\alpha_X}\\
\widetilde{M}^0_Y\ar[r]^{0\choose \id} \ar[d]^{\beta_Y} & \widetilde{M}^0_X\oplus\widetilde{M}^0_Y \ar[r]^{(\id,0)}\ar[d]^{(f,u\circ\beta_Y)} & \widetilde{M}^0_X\ar[d]^{\beta_X}\\
Y\ar[r]^u & Z\ar[r]^v & X,
}
\]
where the left and right columns are also conflations. So by \cite[Corollary 3.2]{Buehler10}, the top morphism in the middle column is an inflation and the bottom one is a deflation. We claim that $(f,u\circ\beta_Y)=\cok\left(\begin{array}{cc}\small\alpha_X & 0\\ -g & \alpha_Y\end{array}\right)$. Indeed, it is clear that 
\[
(f,u\circ\beta_Y)\left(\begin{array}{cc}\small\alpha_X & 0\\ -g & \alpha_Y\end{array}\right)=0.
\]
Moreover, assume that $h\colon Z\to U$ satisfies $h\circ f=0$ and $h\circ u\circ \beta_Y=0$. The latter implies that $h\circ u=0$ since $\beta_Y$ is an epimorphism. So $h=0$ because the upper left square of the diagram~\eqref{cd:push-out} is a push-out diagram. So the middle column of the above diagram is a conflation. Therefore there are morphisms of conflations by Lemma~\ref{lem:lifting-morphisms}
\[
\xymatrix@C=4pc{
\widetilde{M}^{-1}_Z\ar[r]^{\alpha_Z}\ar[d]^{f^{-1}} & \widetilde{M}^0_Z\ar[r]^{\beta_Z} \ar[d]^{f^0}& Z\ar@{=}[d]\\
\widetilde{M}^{-1}_X\oplus\widetilde{M}^{-1}_Y\ar[r]^{\begin{psmallmatrix}\alpha_X & 0\\ -g & \beta_X\end{psmallmatrix}}\ar[d]^{g^{-1}} & \widetilde{M}^0_X\oplus\widetilde{M}^0_Y \ar[r]^(0.55){(f,u\circ\beta_Y)} \ar[d]^{g^0}& Z\ar@{=}[d]\\
\widetilde{M}^{-1}_Z\ar[r]^{\alpha_Z} & \widetilde{M}^0_Z\ar[r]^{\beta_Z} & Z
}
\]
such that $(g^{-1}f^{-1},g^0f^0)$ and $(f^{-1}g^{-1},f^0g^0)$ are both homotopy equivalent to $\id$, and there are morphisms of conflations
\[
\xymatrix@C=4pc{
\widetilde{M}^{-1}_Y\ar[r]^{\alpha_Y}\ar[d]^{g^{-1}{0\choose\id}} & \widetilde{M}^0_Y\ar[r]^{\beta_Y}\ar[d]^{g^0{0\choose \id}} & Y\ar[d]^u\\
\widetilde{M}^{-1}_Z\ar[r]^{\alpha_Z}\ar[d]^{(\id,0)f^{-1}} & \widetilde{M}^0_Z\ar[r]^{\beta_Z}\ar[d]^{(\id,0)f^0} & Z\ar[d]^v\\
\widetilde{M}^{-1}_X\ar[r]^{\alpha_X} & \widetilde{M}^0_X\ar[r]^{\beta_X} & X.
}
\] 
This shows that there is an isomorphism of $\Sigma$-sequences in $\ch^b(\cm)$:
\[
\begin{CD}
(\begin{CD} \widetilde{M}^{-1}_Y @>{\alpha_Y}>> \widetilde{M}^0_Y \end{CD}) @= (\begin{CD} \widetilde{M}^{-1}_Y@>{\alpha_Y}>>  \widetilde{M}^0_Y \end{CD})\\ 
@VV\left({0\choose \id},{0\choose \id}\right)V @VV{\mathbb{P}'(u)}V\\
\begin{CD} (\widetilde{M}^{-1}_X\oplus\widetilde{M}^{-1}_Y @>{\begin{psmallmatrix}\alpha_X & 0\\ -g & \beta_X\end{psmallmatrix}}>> \widetilde{M}^0_X\oplus\widetilde{M}^0_Y) \end{CD} @>(g_1,g_0)>> (\begin{CD} \widetilde{M}^{-1}_Z @>{\alpha_Z}>>  \widetilde{M}^0_Z \end{CD})\\
@VV((\id,0),(\id,0))V @VV{\mathbb{P}'(v)}V\\
(\begin{CD} \widetilde{M}^{-1}_X @>{\alpha_X}>>  \widetilde{M}^0_X \end{CD}) @= (\begin{CD} \widetilde{M}^{-1}_X @>{\alpha_X}>>  \widetilde{M}^0_X \end{CD})\\
@VVgV @VV{\mathbb{P}'(w)}V\\
\Sigma(\begin{CD} \widetilde{M}^{-1}_Y @>{\alpha_Y}>> \widetilde{M}^0_Y \end{CD}) @= \Sigma(\begin{CD} \widetilde{M}^{-1}_Y @>{\alpha_Y}>> \widetilde{M}^0_Y \end{CD})
\end{CD}
\]
So the sequence on the right is an extriangle, as desired.

(b) Let $\xymatrix@=0.9pc{Y'\ar[r]^g & Z'\ar[r]^h & X'\ar[r]^f &\Sigma Y'}$ be an extriangle in $\ch^{[-1,0]}(\cm)$. Since $\mathbb{P}$ is dense, there are $X,Y\in\fpr(\cm)$ and isomorphisms $x\colon \mathbb{P}(X)\to X'$ and $y\colon\mathbb{P}(Y)\to Y'$. By Lemma~\ref{lem:taking-presentation-preserves-extension-2} there is a unique $w\in[\Sigma\cm](X,\Sigma Y)$ such that the following diagram is commutative
\[
\xymatrix{
\mathbb{P}(X)\ar[r]^{\mathbb{P}(w)} \ar[d]^x& \Sigma \mathbb{P}(Y)\ar[d]^{\Sigma y}\\
X'\ar[r]^f & \Sigma Y'.
}
\]
Complete $w$ to a triangle in $\ct$
\[
\xymatrix{
Y\ar[r]^u & Z\ar[r]^v & X\ar[r]^w & \Sigma Y.
}
\]
This is an extriangle in $\fpr(\cm)$ by Lemma~\ref{lem:the-extriangle-structure}. 
Applying $\mathbb{P}$ we obtain an isomorphism of extriangles
\[
\xymatrix{
\mathbb{P}(Y)\ar[r]^{\mathbb{P}(u)}\ar[d]^y & \mathbb{P}(Z)\ar[r]^{\mathbb{P}(v)}\ar[d]^\cong & \mathbb{P}(X)\ar[r]^{\mathbb{P}(w)} \ar[d]^x & \Sigma\mathbb{P}(Y)\ar[d]^{\Sigma y}\\
Y'\ar[r]^g & Z' \ar[r]^h & X'\ar[r]^f & \Sigma Y'.
}
\]
Here that the first row is an extriangle follows from (a).
\end{proof}

\subsection{Morphisms}
\label{ss:the-kernel-of-P}
In this subsection we show that $\mathbb{P}$ is full and describe its kernel.

\smallskip

Let $\ci$ be the ideal of $\fpr(\cm)$ consisting of  morphisms which factors through a morphism $\Sigma M\to M'$ with $M,M'\in \cm$. Then $\ci^2=0$ since $\cm$ is rigid. 

\begin{theorem}
\label{thm:the-kernel}
The functor $\mathbb{P}$ is full, dense and induces an equivalence $\frac{\fpr(\cm)}{\ci}\to \ch^{[-1,0]}(\cm)$.
In particular, $\mathbb{P}$ detects isomorphisms, detects indecomposability and induces a bijection between the isomorphism classes of objects (respectively, indecomposable objects) of $\fpr(\cm)$ and those of $\ch^{[-1,0]}(\cm)$.
\end{theorem}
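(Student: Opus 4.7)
The plan is to establish fullness, identify the morphism kernel with $\ci$, and then deduce the equivalence and the detection statements; density was already obtained in Proposition~\ref{prop:from-fpr-to-2term}.

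For fullness, I start with a chain map $(g^{-1},g^0)\colon\mathbb{P}(X)\to\mathbb{P}(Y)$, so that $g^0 a_X=a_Y g^{-1}$ in $\ct$. Working in the Frobenius enhancement $\cf$ of Section~\ref{ss:lifting-morphisms}, the discrepancy $g^0 a_X-a_Y g^{-1}$ factors in $\cf$ through a projective-injective object, and since $i(M^{-1}_X)\colon M^{-1}_X\to I(M^{-1}_X)$ is an inflation into an injective it equals $\psi\circ i(M^{-1}_X)$ for some $\psi\colon I(M^{-1}_X)\to M^0_Y$. Extending $g^0$ to $\widetilde g^0\colon M^0_X\oplus I(M^{-1}_X)\to M^0_Y\oplus I(M^{-1}_Y)$ by $\widetilde g^0=\begin{psmallmatrix}g^0&-\psi\\ 0&\delta\end{psmallmatrix}$, where $\delta$ is any extension of $i(M^{-1}_Y)\circ g^{-1}$ along $i(M^{-1}_X)$ (which exists because $I(M^{-1}_Y)$ is injective), makes the square $\widetilde g^0\widetilde a_X=\widetilde a_Y g^{-1}$ commute in $\cf$. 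The cokernel-universal property of the first conflation then produces $\widetilde f\colon\widetilde X\to\widetilde Y$ forming a morphism of conflations, and setting $f=\eta_Y\widetilde f\epsilon_X$ exhibits $(g^{-1},g^0)$ as a liftable presentation of $f$, so $\mathbb{P}(f)=[(g^{-1},g^0)]$.

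The inclusion $\ci\subseteq\ker(\mathbb{P})$ is immediate from Remark~\ref{rem:independence-of-the-choice-of-presentation}: with the canonical choices $\mathbb{P}(\Sigma M)=(M\to 0)$ and $\mathbb{P}(M')=(0\to M')$, the space $\Hom_{\ch^{[-1,0]}(\cm)}((M\to 0),(0\to M'))$ vanishes, so $\mathbb{P}(v)=0$ for every $v\colon\Sigma M\to M'$ with $M,M'\in\cm$, and therefore $\mathbb{P}(uvw)=0$ for any $f=uvw\in\ci$. For the reverse inclusion $\ker(\mathbb{P})\subseteq\ci$, suppose a liftable presentation $(f^{-1},f^0)$ of $f$ is null-homotopic via $h\colon M^0_X\to M^{-1}_Y$, so $f^{-1}=h a_X$ and $f^0=a_Y h$. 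Then $f b_X=b_Y a_Y h=0$ and $c_Y f=\Sigma(h a_X)c_X=\Sigma h\circ(\Sigma a_X\circ c_X)=0$, giving factorizations $f=g c_X$ and $f=b_Y g'$ for some $g\colon\Sigma M^{-1}_X\to Y$ and $g'\colon X\to M^0_Y$. The main step is to combine these into a single factorization $f=b_Y\sigma c_X$ for $\sigma\colon\Sigma M^{-1}_X\to M^0_Y$, which directly exhibits $f\in\ci$. I expect this to be the principal technical obstacle: one modifies $g$ within its coset of lifts of $f$ (elements differ by $\xi\circ(-\Sigma a_X)$ for some $\xi\colon\Sigma M^0_X\to Y$) so as to kill $c_Y g$. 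The vanishing $c_Y g\circ c_X=c_Y f=0$ forces $c_Y g=-\eta\circ\Sigma a_X$ for some $\eta$, and the null-homotopy $h$ together with the identity $a_Y h=f^0$ allows one to arrange $\Sigma a_Y\circ\eta=0$, after which the required $\xi$ with $c_Y\xi=-\eta$ exists by the long exact sequence $\Hom(\Sigma M^0_X,Y)\to\Hom(\Sigma M^0_X,\Sigma M^{-1}_Y)\to\Hom(\Sigma M^0_X,\Sigma M^0_Y)$ coming from the defining triangle of $Y$.

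Once $\ker(\mathbb{P})=\ci$ is in place, fullness and density yield that the induced functor $\fpr(\cm)/\ci\to\ch^{[-1,0]}(\cm)$ is an equivalence. The ideal $\ci$ satisfies $\ci^2=0$: a composition of two morphisms in $\ci$ traverses a segment of type $\cm\to\Sigma\cm$, which vanishes by rigidity of $\cm$. Consequently $\id+n$ is invertible with inverse $\id-n$ for any $n\in\ci$, so if $\mathbb{P}(f)$ is an isomorphism then lifting a two-sided inverse through the quotient produces $g$ with $\id-gf,\id-fg\in\ci$, and both of $gf$ and $fg$ become units by the formula above, showing $f$ itself is an isomorphism. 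Detection of indecomposability follows from the standard lifting of idempotents along a nilpotent quotient, and the bijection on isomorphism classes (respectively, on indecomposable isomorphism classes) is then a formal consequence of density together with detection of isomorphisms (respectively, of indecomposability).
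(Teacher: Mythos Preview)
Your treatment of fullness, of the inclusion $\ci\subseteq\ker\mathbb{P}$, of $\ci^2=0$, and of the detection statements is correct and essentially parallel to the paper. The gap is in the reverse inclusion $\ker\mathbb{P}\subseteq\ci$.

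You correctly deduce from the null-homotopy that $fb_X=0$ and $c_Yf=0$, hence $f=gc_X$ for some $g\colon\Sigma M^{-1}_X\to Y$, and you correctly identify that the remaining task is to modify $g$ within its coset so that $c_Yg=0$, the obstruction being the vanishing of $\Sigma a_Y\circ\eta$ for a chosen $\eta$ with $c_Yg=-\eta\,\Sigma a_X$. But the sentence ``the null-homotopy $h$ together with the identity $a_Yh=f^0$ allows one to arrange $\Sigma a_Y\circ\eta=0$'' is not a proof. From the data one only obtains $(\Sigma a_Y\circ\eta)\circ\Sigma a_X=0$ (using $(-\Sigma a_Y)c_Y=0$ and $a_Yf^{-1}=f^0a_X$), so $\Sigma a_Y\circ\eta$ factors through $-\Sigma b_X$; one then needs to kill this new obstruction by adjusting $\eta$ within its ambiguity $\zeta\circ(-\Sigma b_X)$, and the resulting condition is again of the same shape. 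Nothing you have written shows that this process terminates, and I do not see how to make it terminate using only the homotopy $h$ inside $\ct$.

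The paper sidesteps this difficulty by a two-step reduction. It first treats the special cases $X\in\cm$ and $X\in\Sigma\cm$ directly (both are immediate from the shape of the presentations). For general $X$, after writing $f=f_1\circ c_X$, it passes to $\ch^b(\cm)$: the sequence $\mathbb{P}(M^{-1}_X)\to\mathbb{P}(M^0_X)\to\mathbb{P}(X)\to\Sigma\mathbb{P}(M^{-1}_X)$ is a genuine triangle there, so $\mathbb{P}(f_1)\circ\mathbb{P}(c_X)=0$ yields a factorization $\mathbb{P}(f_1)=-f_2\circ\Sigma\mathbb{P}(a_X)$ for some $f_2\colon\Sigma\mathbb{P}(M^0_X)\to\mathbb{P}(Y)$. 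Fullness then lifts $f_2$ to $f_3\colon\Sigma M^0_X\to Y$, and $f_1'=f_1+f_3\circ\Sigma a_X$ satisfies $\mathbb{P}(f_1')=0$; since the domain of $f_1'$ lies in $\Sigma\cm$, the special case applies and gives $f_1'=b_Y\circ a$, whence $f=b_Y\circ a\circ c_X\in\ci$. In effect the paper uses the triangulated structure of $\ch^b(\cm)$ and fullness precisely to produce the correction $\xi=f_3$ that your argument asserts to exist but does not construct. If you want to keep your direct approach in $\ct$, you would need to supply an honest construction of $\xi$; otherwise, adopting the paper's reduction to the cases $X\in\cm$ and $X\in\Sigma\cm$ closes the gap with minimal changes to the rest of your write-up.
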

\begin{proof} Assume $\ct=\underline{\cf}$, where $\cf$ is a Frobenius category, as in the proof of Proposition~\ref{prop:from-fpr-to-2term}.

(1) \emph{$\mathbb{P}$ is dense.} By Proposition~\ref{prop:from-fpr-to-2term}.

(2) \emph{$\mathbb{P}$ is full.}  Let $X,Y\in\fpr(\cm)$ and let $g=(g^{-1},g^0)\colon\mathbb{P}(X)\to\mathbb{P}(Y)$ be a chain map
\[
\xymatrix{
M^{-1}_X\ar[r]^{a_X}\ar[d]^{g^{-1}} & M^0_X\ar[d]^{g^0}\\
M^{-1}_Y\ar[r]^{a_Y} & M^0_Y.
}
\]
Then there is a commutative diagram in the Frobenius category $\cf$
\[
\xymatrix{
M^{-1}_X\ar[r]^(0.35){\widetilde{a}_X}\ar[d]^{g^{-1}} & M^0_X\oplus I(M^{-1}_X)\ar[d]^{\widetilde{g}^0}\\
M^{-1}_Y\ar[r]^(0.35){\widetilde{a}_Y} & M^0_Y\oplus I(M^{-1}_Y).
}
\]
We complete it to a morphism of conflations in $\cf$
\[
\xymatrix{
M^{-1}_X\ar[r]^(0.35){\widetilde{a}_X}\ar[d]^{g^{-1}} & M^0_X\oplus I(M^{-1}_X)\ar[d]^{\widetilde{g}^0}\ar[r]^(0.7){\widetilde{b}_X} & \widetilde{X}\ar[d]^{\widetilde{f}}\\
M^{-1}_Y\ar[r]^(0.35){\widetilde{a}_Y} & M^0_Y\oplus I(M^{-1}_Y)\ar[r]^(0.7){\widetilde{b}_Y} &\widetilde{Y}.
}
\]
Let $f=\epsilon_X\circ\widetilde{f}\circ\eta_Y$. Then $(g^{-1},g^0)$ is a liftable presentation of $f$ and thus $g=\mathbb{P}(f)$.

(3) \emph{Let $f\colon X\to Y$ be a morphism in $\fpr(\cm)$. Then $\mathbb{P}(f)=0$ if and only if $f\in\ci$.}
Assume $f\in\ci$. Then $\mathbb{P}(f)=0$ because $\Hom_{\ch^{[-1,0]}(\cm)}(\Sigma \mathbb{P}(M),\mathbb{P}(M'))=0$ for any $M,M'\in\cm$. Next assume $\mathbb{P}(f)=0$. 

\noindent Case 1: $X=M\in\cm$. Consider a liftable presentation of $f$
\[
\xymatrix{
0\ar[r] \ar[d] & M \ar[r]^{\id} \ar[d]^{f^0} & M\ar[r]\ar[d]^f & 0\ar[d]\\
M^{-1}_Y\ar[r]^{a_Y} & M^0_Y\ar[r]^{b_Y} & Y\ar[r]^{c_Y} & \Sigma M^{-1}_Y.
}
\] 
Since $\mathbb{P}(f)=0$, there is a morphism $h\colon M\to M_1^Y$ such that $f^0=a_Y\circ h$. So $f=b_Y\circ f^0=0$ belongs to $\ci$.

\noindent Case 2: $X=\Sigma M\in\Sigma\cm$. Consider a liftable presentation of $f$
\[
\xymatrix{
M\ar[r] \ar[d]^{f^{-1}} & 0 \ar[r] \ar[d]^{0} & \Sigma M\ar[r]^{\id}\ar[d]^f & \Sigma M\ar[d]^{\Sigma f^{-1}}\\
M^{-1}_Y\ar[r]^{a_Y} & M^0_Y\ar[r]^{b_Y} & Y\ar[r]^{c_Y} & \Sigma M^{-1}_Y.
}
\]
Since $\mathbb{P}(f)=0$, it follows that $f^{-1}=0$. Therefore $c_Y\circ f=0$, and hence $f$ factors through $b_Y$ and belongs to $\ci$.

\noindent Case 3: the general case. We have $\mathbb{P}(f\circ b_X)=\mathbb{P}(f)\circ \mathbb{P}(b_X)=0$, so by Case 1, $f\circ b_X=0$. Therefore $f$ factors through $c_X$ to produce a morphism $f_1\colon \Sigma M^{-1}_X\to Y$ such that $f=f_1\circ c_X$. 
\[
\xymatrix@C=4pc{
M^{-1}_X\ar[r]^{a_X} & M^0_X \ar[r]^{b_X} & X\ar[r]^{c_X} \ar[d]|(0.4)f& \Sigma M^{-1}_X\ar@{.>}[dl]|(0.4){f_1}\ar[r]^{-\Sigma a_X} & \Sigma M^0_X\ar@{.>}[dll]|(0.4){f_3}\\
M^{-1}_Y\ar[r]^{a_Y} & M^0_Y\ar[r]^{b_Y} & Y\ar[r]^{c_Y} & \Sigma M^{-1}_Y
}
\]
Now $\mathbb{P}(f_1)\circ\mathbb{P}(c_X)=\mathbb{P}(f_1\circ c_X)=\mathbb{P}(f)=0$, so there exists $f_2\colon\Sigma\mathbb{P}(M_0^X)\to\mathbb{P}(Y)$ such that $\mathbb{P}(f_1)=-f_2\circ\Sigma\mathbb{P}(a_X)$. 
\[
\xymatrix@C=4pc{
\mathbb{P}(M^{-1}_X)\ar[r]^{\mathbb{P}(a_X)} & \mathbb{P}(M^0_X) \ar[r]^{\mathbb{P}(b_X)} & \mathbb{P}(X)\ar[r]^{\mathbb{P}(c_X)} \ar[d]|(0.4){\mathbb{P}(f)=0}& \Sigma \mathbb{P}(M^{-1}_X)\ar@{.>}[dl]|(0.4){\mathbb{P}(f_1)}\ar[r]^{-\Sigma\mathbb{P}(a_X)} & \Sigma\mathbb{P}(M^0_X)\ar@{.>}[dll]|(0.4){f_2}\\
\mathbb{P}(M^{-1}_Y)\ar[r]_{\mathbb{P}(a_Y)} & \mathbb{P}(M^0_Y)\ar[r]_{\mathbb{P}(b_Y)} & \mathbb{P}(Y)\ar[r]_{\mathbb{P}(c_Y)} & \Sigma \mathbb{P}(M^{-1}_Y)
}
\]
By (2), there exists $f_3\colon\Sigma M^0_X\to Y$ such that $\mathbb{P}(f_3)=f_2$. Let $f_1'=f_1+f_3\circ\Sigma a_X$. Then $\mathbb{P}(f_1')=0$. By Case 2, $f_1'$ factors through $b_Y$ to produce a morphism $a\colon \Sigma M^{-1}_X\to M^0_Y$ such that $f_1'=b_Y\circ a$. Therefore
$f=f_1\circ c_X=(f_1'-f_3\circ\Sigma a_X)\circ c_X=f_1'\circ c_X=b_Y\circ a\circ c_X$, as desired.

(4) \emph{$\mathbb{P}$ induces an equivalence $\frac{\fpr(\cm)}{\ci}\to \ch^{[-1,0]}(\cm)$.} This immediately follows from (1) (2) and (3).

(5) \emph{Let $f\colon X\to Y$ be a morphism in $\fpr(\cm)$. Then $f$ is an isomorphism if and only if $\mathbb{P}(f)$ is an isomorphism.} Assume that $\mathbb{P}(f)$ is an isomorphism. Then there exists $g'\colon \mathbb{P}(Y)\to\mathbb{P}(X)$ such that $\mathbb{P}(f)\circ g'=\id_{\mathbb{P}(Y)}$ and $g'\circ\mathbb{P}(f)=\id_{\mathbb{P}(X)}$. Let $f'\colon Y\to X$ be a morphism such that $g'=\mathbb{P}(f')$. Then $f\circ f'\in \id_Y+\ci$ and $f'\circ f\in\id_X+\ci$ are isomorphisms because $\ci^2=0$, and it follows that $f$ is an isomorphism.

(6) \emph{Let $X\in\fpr(\cm)$. Then $X$ is indecomposable if and only if $\mathbb{P}(X)$ is indecomposable.} Assume that $e$ is a non-trivial idempotent of $\End_\ct(X)$. Since $\ci^2=0$, it follows that $e$ does not belong to $\ci$ and thus $\mathbb{P}(e)\neq 0$. For the same reason $\id_{\mathbb{P}(X)}-\mathbb{P}(e)=\mathbb{P}(1-e)\neq 0$. Therefore $\mathbb{P}(e)$ is a non-trivial idempotent of $\End(\mathbb{P}(X))$. Conversely, assume that $f$ is a non-trivial idempotent of $\End(\mathbb{P}(X))$. Then by \cite[Lemma 3.2.1]{DrozdKirichenko}, there exists a non-trivial idempotent $e$ of $\End_\ct(X)$ such that $\mathbb{P}(e)=f$. In particular, $\End_\ct(X)$ has a non-trivial idempotent.

(7) \emph{$\mathbb{P}$ induces a bijection from the set of isomorphism classes of objects of $\fpr(\cm)$ to that of $\ch^{[-1,0]}(\cm)$.} This follows from (4) and (5).

(8) \emph{$\mathbb{P}$ induces a bijection from the set of isomorphism classes of indecomposable objects of $\fpr(\cm)$ to that of $\ch^{[-1,0]}(\cm)$.} This follows from (7) and (6).
\end{proof}

The following corollary is immediate.
\begin{corollary}
\label{cor:when-P-is-an-equivalence}
The functor $\mathbb{P}$ is an equivalence if and only if $\Hom_\ct(M,\Sigma^{-1}M')=0$ for any $M,M'\in\cm$.
\end{corollary}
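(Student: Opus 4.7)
The plan is to derive the corollary directly from Theorem~\ref{thm:the-kernel}, which already identifies $\mathbb{P}$ with the quotient functor $\fpr(\cm)\to\fpr(\cm)/\ci\iso\ch^{[-1,0]}(\cm)$. Since a full dense functor that induces an equivalence modulo an ideal is itself an equivalence precisely when that ideal vanishes, the task reduces to showing that $\ci=0$ if and only if $\Hom_\ct(M,\Sigma^{-1}M')=0$ for all $M,M'\in\cm$.

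For the ``if'' direction, assume $\Hom_\ct(M,\Sigma^{-1}M')=0$ for all $M,M'\in\cm$. Applying $\Sigma$ this is equivalent to $\Hom_\ct(\Sigma M,M')=0$ for all $M,M'\in\cm$. But every morphism in $\ci$ factors through some morphism $\Sigma M\to M'$ with $M,M'\in\cm$, and each such morphism is zero by assumption. Hence $\ci=0$, and the induced equivalence $\fpr(\cm)/\ci\to\ch^{[-1,0]}(\cm)$ of Theorem~\ref{thm:the-kernel} coincides with $\mathbb{P}$.

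For the converse, assume $\mathbb{P}$ is an equivalence. Then $\mathbb{P}$ is faithful, so $\ci=0$ by the description of the morphism kernel. Fix any $f\colon M\to\Sigma^{-1}M'$ with $M,M'\in\cm$; I would like to exhibit $\Sigma f\colon \Sigma M\to M'$ as an element of $\ci$. Both $\Sigma M$ and $M'$ lie in $\fpr(\cm)$ (the former via the triangle $M\to 0\to \Sigma M\to \Sigma M$, the latter via the trivial presentation $0\to M'\to M'\to 0$), and $\Sigma f$ trivially factors through the morphism $\Sigma M\to M'$ given by itself, so $\Sigma f\in\ci$. Therefore $\Sigma f=0$, and since $\Sigma$ is an equivalence we conclude $f=0$.

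No real obstacle arises here: the content is entirely packaged in Theorem~\ref{thm:the-kernel}, and only the translation between ``morphisms factoring through $\Sigma\cm\to\cm$'' and the Hom-space $\Hom_\ct(\Sigma M,M')$ needs to be observed, together with the trivial remark that $\cm\subseteq\fpr(\cm)$ and $\Sigma\cm\subseteq\fpr(\cm)$ so that such morphisms are honest morphisms of $\fpr(\cm)$.
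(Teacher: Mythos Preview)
Your proof is correct and follows essentially the same approach as the paper, which simply declares the corollary ``immediate'' from Theorem~\ref{thm:the-kernel}. You have spelled out exactly the details the paper leaves implicit: that $\mathbb{P}$ is an equivalence if and only if the ideal $\ci$ vanishes, and that $\ci=0$ is equivalent to the vanishing of $\Hom_\ct(\Sigma M,M')$ for all $M,M'\in\cm$.
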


\subsection{Extensions of degree $-1$}

In this subsection we study the behaviour of $\mathbb{P}$ on extensions of degree $-1$. 

\smallskip
For $X,Y\in\fpr(\cm)$, the set $\Hom(\mathbb{P}(X),\Sigma^{-1}\mathbb{P}(Y))$ consists of morphisms $g\colon M^0_X\to M^{-1}_Y$ such that $g\circ a_X=0=a_Y\circ g$. The functor $\mathbb{P}$ induces the following map
\[
\xymatrix@R=0.5pc{
\rho\colon\Hom(X,\Sigma^{-1}Y)\ar[r] & \Hom(\mathbb{P}(X),\Sigma^{-1}\mathbb{P}(Y)).\\
f\ar@{|->}[r] & -\Sigma^{-1}c_Y\circ f\circ b_X
}
\]
\[
\xymatrix@C=4pc{
M^{-1}_X\ar[r]^{a_X} & M^0_X\ar[r]^{b_X} & X\ar[r]^{c_X}\ar[d]^f & \Sigma M^{-1}_X\\
\Sigma^{-1}M^{-1}_Y\ar[r]^{-\Sigma^{-1}a_Y} & \Sigma^{-1}M^0_Y\ar[r]^{-\Sigma^{-1}b_Y} & \Sigma^{-1}Y\ar[r]^{-\Sigma^{-1}c_Y} & M^{-1}_Y.
}
\]

\begin{lemma}
\label{lem:P-on-negative-extensions}
Assume that $\Hom_\ct(M,\Sigma^{-1}M')=0$ for any $M,M'\in\cm$. Then $\rho$ is sujective with kernel those morphisms factoring a morphism $\Sigma M\to\Sigma^{-1}M'$ with $M,M'\in\cm$.
\end{lemma}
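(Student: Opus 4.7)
The plan is to do a diagram chase with the two presentation triangles for $X$ and $Y$, using the vanishing hypothesis $\Hom_\ct(M,\Sigma^{-1}M')=0$ at two crucial moments to kill auxiliary morphisms.

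For \emph{surjectivity}, I would take $g\in\Hom_{\ch^{[-1,0]}(\cm)}(\mathbb{P}(X),\Sigma^{-1}\mathbb{P}(Y))$ which, after unwinding the definition of a chain map between two-term complexes in degrees $(-1,0)$ and $(0,1)$, is a morphism $g\colon M^0_X\to M^{-1}_Y$ satisfying $g\circ a_X=0$ and $a_Y\circ g=0$ (no nonzero homotopies are possible). Since $g\circ a_X=0$, I factor $g=h\circ b_X$ for some $h\colon X\to M^{-1}_Y$. Then $a_Y\circ h\circ b_X=a_Y\circ g=0$, so $a_Y\circ h$ factors through $c_X$ as $a_Y\circ h=k\circ c_X$ for some $k\colon \Sigma M^{-1}_X\to M^0_Y$. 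But $k\in\Hom_\ct(\Sigma M^{-1}_X,M^0_Y)\cong\Hom_\ct(M^{-1}_X,\Sigma^{-1}M^0_Y)=0$ by hypothesis, so $a_Y\circ h=0$. Therefore $h$ lifts through the rotated triangle as $h=-\Sigma^{-1}c_Y\circ f$ for some $f\colon X\to\Sigma^{-1}Y$, and $\rho(f)=-\Sigma^{-1}c_Y\circ f\circ b_X=h\circ b_X=g$.

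For the \emph{kernel description}, one inclusion is immediate: if $f=v\circ u\circ w$ with $w\colon X\to\Sigma M$, $u\colon\Sigma M\to\Sigma^{-1}M'$, $v\colon\Sigma^{-1}M'\to\Sigma^{-1}Y$, then $w\circ b_X\in\Hom_\ct(M^0_X,\Sigma M)=0$ by rigidity of $\cm$, hence $\rho(f)=0$. Conversely, suppose $\rho(f)=-\Sigma^{-1}c_Y\circ f\circ b_X=0$. Then $\Sigma^{-1}c_Y\circ f$ vanishes on $b_X$ and so factors through $c_X$, producing a map $\Sigma M^{-1}_X\to M^{-1}_Y$; this map lies in $\Hom_\ct(\Sigma M^{-1}_X,M^{-1}_Y)\cong\Hom_\ct(M^{-1}_X,\Sigma^{-1}M^{-1}_Y)=0$, so $\Sigma^{-1}c_Y\circ f=0$. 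Thus $f$ factors through the rotated triangle for $Y$ as $f=-\Sigma^{-1}b_Y\circ f_1$ for some $f_1\colon X\to\Sigma^{-1}M^0_Y$. Now $f_1\circ b_X\in\Hom_\ct(M^0_X,\Sigma^{-1}M^0_Y)=0$ by hypothesis, so $f_1$ factors through $c_X$ as $f_1=f_2\circ c_X$ with $f_2\colon\Sigma M^{-1}_X\to\Sigma^{-1}M^0_Y$. Then
\[
f=-\Sigma^{-1}b_Y\circ f_2\circ c_X,
\]
which is a factorisation of $f$ through the morphism $f_2\colon\Sigma M^{-1}_X\to\Sigma^{-1}M^0_Y$ of the required form.

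There is no real obstacle here; the argument is a routine chase. The only subtlety worth flagging is that the vanishing hypothesis must be invoked twice on different auxiliary morphisms (one for each direction of the triangle pair), and one should verify at the outset that morphisms $\mathbb{P}(X)\to\Sigma^{-1}\mathbb{P}(Y)$ in $\ch^{[-1,0]}(\cm)$ are genuinely represented by a single map $M^0_X\to M^{-1}_Y$ with no nontrivial homotopy relation, so that the formula for $\rho$ is unambiguous.
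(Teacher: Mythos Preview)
Your proof is correct and follows essentially the same diagram chase as the paper's. The only difference is the order in which you invoke the two presentation triangles: for surjectivity you first use $g\circ a_X=0$ to factor through $b_X$ and then $a_Y\circ g=0$ to factor through $-\Sigma^{-1}c_Y$, whereas the paper does these in the opposite order; likewise for the kernel you factor $f$ first through $-\Sigma^{-1}b_Y$ and then through $c_X$, while the paper reverses this. These are symmetric variants of the same argument and neither buys anything over the other.
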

\begin{proof}
We first describe the kernel of $\rho$. Let $f$ be in the kernel of $\rho$, \ie $-\Sigma^{-1}c_Y\circ f\circ b_X=0$. Then $f\circ b_X$ factors through $-\Sigma^{-1}b_Y$. But $\Hom_\ct(M^0_X,\Sigma^{-1}M^0_Y)=0$ by assumption, therefore $f\circ b_X=0$. So $f$ factors through $c_X$ to produce a  morphism $f'\colon \Sigma M^{-1}_X\to \Sigma^{-1}Y$ such that $f=f'\circ c_X$.
Now $-\Sigma^{-1}c_Y\circ f'=0$ because $\Hom_\ct(\Sigma M^{-1}_X,M^{-1}_Y)=0$ by assumption, so $f'$ factors through $-\Sigma^{-1}b_Y$ to produce a morphism $f''\colon \Sigma M^{-1}_X\to\Sigma^{-1}M^0_Y$ such that $f'=-\Sigma^{-1}b_Y\circ f''$. Therefore $f=-\Sigma^{-1}b_Y\circ f''\circ c_X$, as desired.
\[
\xymatrix@C=4pc{
M^{-1}_X\ar[r]^{a_X} & M^0_X\ar[r]^{b_X} & X\ar[r]^{c_X}\ar[d]|(0.3)f & \Sigma M^{-1}_X\ar@{.>}[dl]|{f'}\ar@{.>}[dll]|(0.35){f''}\\
\Sigma^{-1}M^{-1}_Y\ar[r]_{-\Sigma^{-1}a_Y} & \Sigma^{-1}M^0_Y\ar[r]_{-\Sigma^{-1}b_Y} & \Sigma^{-1}Y\ar[r]_{-\Sigma^{-1}c_Y} & M^{-1}_Y.
}
\]
Conversely, let $f\colon X\to\Sigma^{-1}Y$ be a morphism factoring through a morphism $\Sigma M\to \Sigma^{-1} M'$ with $M,M'\in\cm$. Then $f\circ b_X=0$ and thus $f$ belongs to the kernel of $\rho$.

Next we show the surjectivity of $\rho$. Let $g\colon M^0_X\to M^{-1}_Y$ be a morphism such that $g\circ a_X=0=a_Y\circ g$. Since $a_Y\circ g=0$, it follows that $g$ factors through $-\Sigma^{-1}c_Y$ to produce a morphism $g'\colon M^0_X\to\Sigma^{-1}Y$ such that $g=-\Sigma^{-1}c_Y\circ g'$. Now $-\Sigma^{-1}c_Y\circ g'\circ a_X=g\circ a_X=0$, so $g'\circ a_X$ factors through $-\Sigma^{-1}b_Y$, but $\Hom_\ct(M^{-1}_X,\Sigma^{-1}M^0_Y)=0$ by assumption, soe $g'\circ a_X=0$. So $g'$ factors through $b_X$ to produce a morphism $g''\colon X\to\Sigma^{-1}Y$ such that $g'=g''\circ b_X$. It follows that $g=\rho(g'')$.
\[
\xymatrix@C=4pc{
M^{-1}_X\ar[r]^{a_X} & M^0_X\ar[r]^{b_X}\ar[drr]|(0.3)g\ar@{.>}[dr]|(0.3){g'} & X\ar[r]^{c_X}\ar@{.>}[d]|(0.3){g''} & \Sigma M^{-1}_X\\
\Sigma^{-1}M^{-1}_Y\ar[r]_{-\Sigma^{-1}a_Y} & \Sigma^{-1}M^0_Y\ar[r]_{-\Sigma^{-1}b_Y} & \Sigma^{-1}Y\ar[r]_{-\Sigma^{-1}c_Y} & M^{-1}_Y.
}
\]
\end{proof}

\subsection{Compatibility with cluster-tilting}

Consider the map
\[
\mathbf{p}\colon\cn\mapsto\Im(\mathbb{P}|_\cn),
\] 
from the set of full subcategories of $\fpr(\cm)$ closed under isomorphisms to the set of full subcategories of $\ch^{[-1,0]}(\cm)$ closed under isomorphisms. By Theorem~\ref{thm:the-kernel}, $\mathbf{p}$ is bijective.

\begin{theorem}
\label{thm:P-induces-bijection-between-cto} 
The map $\mathbf{p}$ restricts to a bijection from the set of relative rigid subcategories of $\fpr(\cm)$ to the set of 2-term presilting subcategories of $\ch^b(\cm)$, which further restricts to a bijection from the set of weakly relative cluster-tilting subcategories of $\fpr(\cm)$ to the set of 2-term silting subcategories of $\ch^b(\cm)$. 
\end{theorem}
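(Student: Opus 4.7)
The plan is to deduce the statement by bootstrapping off the three main properties of $\mathbb{P}$ already established: Theorem~\ref{thm:the-kernel} (density and the description of the kernel ideal $\ci$, giving $\mathbf{p}$ as a bijection on iso-closed subcategories), Lemma~\ref{lem:taking-presentation-preserves-extension-2} (translation of the relative extension space), and Proposition~\ref{prop:P-detects-extriangles} (translation of the extriangle structure). The last ingredient will be Lemma~\ref{lem:rigid-vs-presilting-in-homotopy-category}, used only to replace notions internal to $\ch^{[-1,0]}(\cm)$ by the corresponding $\ch^b(\cm)$-notions. Throughout, note that $\mathbb{P}$ identifies $\cm\subseteq\fpr(\cm)$ with $\cm\subseteq\ch^{[-1,0]}(\cm)$ (the stalks in degree $0$), which in both cases is the subcategory of projective objects of the ambient $0$-Auslander extriangulated structure; this will be used implicitly below.

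For the first bijection, observe that by Lemma~\ref{lem:taking-presentation-preserves-extension-2} the formula $f\mapsto\Sigma b_Y\circ\Sigma f\circ c_X$ yields a natural bijection
\[
[\Sigma\cm](X,\Sigma Y)\iso\Hom_{\ch^b(\cm)}(\mathbb{P}(X),\Sigma\mathbb{P}(Y))
\]
for all $X,Y\in\fpr(\cm)$. Hence a subcategory $\cn\subseteq\fpr(\cm)$ is relative rigid if and only if $\mathbf{p}(\cn)$ is rigid in $\ch^b(\cm)$, and because $\mathbf{p}(\cn)\subseteq\ch^{[-1,0]}(\cm)$ is 2-term, Lemma~\ref{lem:rigid-vs-presilting-in-homotopy-category} promotes rigid to presilting. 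Combined with the bijectivity of $\mathbf{p}$ from Theorem~\ref{thm:the-kernel}, this yields the first bijection.

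For the second bijection, by Lemma~\ref{lem:rigid-vs-presilting-in-homotopy-category} the notions ``weakly relative cluster-tilting subcategory of $\ch^{[-1,0]}(\cm)$'' and ``silting subcategory of $\ch^b(\cm)$'' coincide for 2-term subcategories. So it suffices to show that $\cn\subseteq\fpr(\cm)$ is weakly relative cluster-tilting if and only if $\mathbf{p}(\cn)\subseteq\ch^{[-1,0]}(\cm)$ is. Relative rigidity has already been matched up. The Ext-orthogonal maximality clause
\[
\cn=\{X\in\fpr(\cm)\mid [\Sigma\cm](X,\Sigma N)=0=[\Sigma\cm](N,\Sigma X)\text{ for all }N\in\cn\}
\]
transports verbatim through Lemma~\ref{lem:taking-presentation-preserves-extension-2} combined with the bijection on isomorphism classes of objects from Theorem~\ref{thm:the-kernel}. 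For the generating condition $\cm\subseteq\Sigma^{-1}\cn*\cn$, one appeals to Proposition~\ref{prop:P-detects-extriangles}: an extriangle $M\to N_0\to N_1\dashrightarrow$ in $\fpr(\cm)$ with $M\in\cm$ and $N_0,N_1\in\cn$ maps to an extriangle $\mathbb{P}(M)\to\mathbb{P}(N_0)\to\mathbb{P}(N_1)\dashrightarrow$ in $\ch^{[-1,0]}(\cm)$ with $\mathbb{P}(N_i)\in\mathbf{p}(\cn)$, and every such extriangle in $\ch^{[-1,0]}(\cm)$ lifts uniquely up to isomorphism to one in $\fpr(\cm)$.

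The step I expect to need the most care is the last one, the translation of the generating condition: one must use the full strength of Proposition~\ref{prop:P-detects-extriangles} in both directions, and one has to be explicit about the fact that $\mathbb{P}$ restricted to $\cm$ gives (up to canonical isomorphism) the inclusion of $\cm$ into $\ch^{[-1,0]}(\cm)$ as stalk complexes, so that ``$M\in\cm$'' on one side matches ``$\mathbb{P}(M)\in\cm$'' on the other. Once these identifications are in place, the remaining verifications are purely a matter of unwinding the definitions of relative rigid, weakly relative cluster-tilting, presilting and silting.
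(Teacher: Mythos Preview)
Your proposal is correct and follows essentially the same route as the paper: both use Lemma~\ref{lem:taking-presentation-preserves-extension-2} to match relative rigidity with rigidity, Lemma~\ref{lem:rigid-vs-presilting-in-homotopy-category} to pass to (pre)silting, and Proposition~\ref{prop:P-detects-extriangles} to transfer the generating condition, with Theorem~\ref{thm:the-kernel} providing the underlying bijection $\mathbf{p}$. The only difference is one of emphasis: you explicitly verify the Ext-orthogonal maximality clause in the definition of weakly relative cluster-tilting (via Lemma~\ref{lem:taking-presentation-preserves-extension-2} plus the object bijection), whereas the paper folds this into a single ``Therefore'' and then records as a corollary that relative rigid plus generating already characterises weakly relative cluster-tilting.
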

\begin{proof}
Let $\cn$ be a subcategory of $\fpr(\cm)$.  By Lemma~\ref{lem:taking-presentation-preserves-extension-2}, it is relative rigid if and only if $\mathbf{p}(\cn)$ is a rigid subcategory of $\ch^{[-1,0]}(\cm)$, if and only if $\mathbf{p}(\cn)$ is a 2-term presilting subcategory of $\ch^b(\cm)$ by Lemma~\ref{lem:rigid-vs-presilting-in-homotopy-category}. Applying Proposition~\ref{prop:P-detects-extriangles}, we see that $\cn$ is generating if and only if $\mathbf{p}(\cn)$ is generating. Therefore $\cn$ is weakly relative cluster-tilting if and only if $\Im(\mathbb{P}|_\cn)$ is weakly relative cluster-tilting, if and only if $\Im(\mathbb{P}|_\cn)$ is 2-term silting in $\ch^b(\cm)$, by Lemma~\ref{lem:rigid-vs-presilting-in-homotopy-category}.
\end{proof}

\begin{corollary}
A subcategory of $\fpr(\cm)$ is weakly relative cluster-tilting if and only if it is relative rigid and generating.
\end{corollary}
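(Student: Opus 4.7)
The plan is to derive this corollary as a direct consequence of Theorem~\ref{thm:P-induces-bijection-between-cto} together with Lemma~\ref{lem:rigid-vs-presilting-in-homotopy-category}. The forward implication is immediate from the definition of weakly relative cluster-tilting: such a subcategory is relative rigid and generating by definition. So the content lies in the converse.

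For the converse, suppose $\cn\subseteq\fpr(\cm)$ is relative rigid and generating. I would first apply Theorem~\ref{thm:P-induces-bijection-between-cto}: since $\cn$ is relative rigid, $\mathbf{p}(\cn)$ is a 2-term presilting subcategory of $\ch^b(\cm)$. Inspecting the proof of that theorem, the equivalence ``$\cn$ generating $\Leftrightarrow$ $\mathbf{p}(\cn)$ generating in $\ch^{[-1,0]}(\cm)$'' is established via Proposition~\ref{prop:P-detects-extriangles}, so $\mathbf{p}(\cn)$ is generating as well.

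Next I would promote ``2-term presilting + generating'' to ``2-term silting'' inside $\ch^b(\cm)$. Since $\mathbf{p}(\cn)$ is generating, each $M\in\cm$ fits in a triangle with both outer terms in $\mathbf{p}(\cn)$, so $M\in\thick(\mathbf{p}(\cn))$. Because $\ch^b(\cm)=\thick(\cm)$, this forces $\thick(\mathbf{p}(\cn))=\ch^b(\cm)$, and combined with presilting we obtain that $\mathbf{p}(\cn)$ is silting. Lemma~\ref{lem:rigid-vs-presilting-in-homotopy-category}(iv)$\Leftrightarrow$(v) then says that $\mathbf{p}(\cn)$ is weakly relative cluster-tilting in $\ch^{[-1,0]}(\cm)$. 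Finally, the second bijection in Theorem~\ref{thm:P-induces-bijection-between-cto} transports this back: $\cn=\mathbf{p}^{-1}(\mathbf{p}(\cn))$ is weakly relative cluster-tilting in $\fpr(\cm)$.

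There is no real obstacle here; the only subtlety is making sure that the ``generating'' condition $\cm\subseteq\Sigma^{-1}\mathbf{p}(\cn)*\mathbf{p}(\cn)$ in $\ch^{[-1,0]}(\cm)$ does upgrade to $\thick(\mathbf{p}(\cn))=\ch^b(\cm)$, and this is immediate from $\ch^b(\cm)=\thick(\cm)$ together with the evident inclusion $\Sigma^{-1}\mathbf{p}(\cn)*\mathbf{p}(\cn)\subseteq\thick(\mathbf{p}(\cn))$. In particular, the ``equality'' clause in the definition of weakly relative cluster-tilting comes for free once relative rigidity and generating are present, and the whole argument is just bookkeeping around the two bijections already proved.
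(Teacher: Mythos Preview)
Your proof is correct and is essentially the approach the paper intends: the corollary is placed immediately after Theorem~\ref{thm:P-induces-bijection-between-cto} with no separate proof, and your argument simply unpacks why it follows---transferring ``relative rigid $+$ generating'' to $\ch^{[-1,0]}(\cm)$ via $\mathbf{p}$, upgrading to silting there using $\ch^b(\cm)=\thick(\cm)$, and pulling back through the second bijection. The only minor redundancy is the detour through Lemma~\ref{lem:rigid-vs-presilting-in-homotopy-category}(iv)$\Leftrightarrow$(v): once $\mathbf{p}(\cn)$ is silting you can invoke the second bijection of Theorem~\ref{thm:P-induces-bijection-between-cto} directly, without first naming $\mathbf{p}(\cn)$ as weakly relative cluster-tilting.
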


\subsection{The one-object case}

Assume that $\cm=\add(M)$ for a rigid object $M\in\ct$. Let $A=\End_{\ct}(M)$. Then there is the Auslander's projectivisation 
\[
\Hom(M,?)\colon\add(M)\longrightarrow \proj A,
\]
which is an equivalence, and induces a triangle equivalence $\ch^b(\add(M))\to \ch^b(\proj A)$. Composing this triangle equivalence with $\mathbb{P}$, we obtain a functor, still denoted by $\mathbb{P}$:
\[
\mathbb{P}\colon \fpr(M)\longrightarrow\ch^{[-1,0]}(\add(M))\longrightarrow\ch^{[-1,0]}(\proj A).
\]

\begin{theorem}
\label{thm:the-one-object-case}
\begin{itemize}
\item[(a)] The functor $\mathbb{P}$ is full, dense and induces an equivalence $\frac{\fpr(M)}{\ci}\to \ch^{[-1,0]}(\proj A)$, where $\ci$ consists of morphisms which factors through a morphism $\Sigma M_1\to M_2$ with $M_1,M_2\in\add(M)$. In particular, $\mathbb{P}$ is an equivalence if and only if $\Hom_\ct(M,\Sigma^{-1}M)=0$.
\item[(b)] $\mathbb{P}$ detects isomorphisms, detects indecomposability and induces a bijection between the isomorphism classes of objects of $\fpr(M)$ and those of $\ch^{[-1,0]}(\proj A)$.
\item[(c)] $\mathbb{P}$ induces a bijection between the set of isomorphism classes of  relative cluster-tilting objects of $\fpr(M)$ and the set of isomorphism classes of 2-term silting objects of $\ch^b(\proj A)$. 
\item[(d)] Assume further that $k$ is a field and $\fpr(M)$ is Hom-finite over $k$. Then the bijection in (c) restricts a bijection between the set of isomorphism classes of basic relative cluster-tilting objects of $\fpr(M)$ and the set of isomorphism classes of basic 2-term silting objects of $\ch^b(\proj A)$, which commutes with mutations.
\end{itemize}
\end{theorem}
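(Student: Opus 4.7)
The plan is to deduce this theorem by specializing the subcategory-level results of Sections~2--3 to $\cm=\add(M)$, and transferring to $\proj A$ via Auslander's projectivization $\Hom_\ct(M,?)\colon\add(M)\iso\proj A$, which extends to a triangle equivalence $\ch^b(\add(M))\iso\ch^b(\proj A)$ restricting to $\ch^{[-1,0]}(\add(M))\iso\ch^{[-1,0]}(\proj A)$. Parts (a) and (b) are then direct restatements. Indeed $\add(M)$ is a rigid subcategory closed under finite direct sums and direct summands, so Theorem~\ref{thm:the-kernel} gives that the version of $\mathbb{P}$ landing in $\ch^{[-1,0]}(\add(M))$ is full, dense, induces the equivalence $\fpr(M)/\ci\iso\ch^{[-1,0]}(\add(M))$, detects isomorphisms and indecomposability, and induces a bijection on isomorphism classes. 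The ``if and only if'' clause in (a) is Corollary~\ref{cor:when-P-is-an-equivalence}, after noting that $\Hom_\ct(M',\Sigma^{-1}M'')=0$ for all $M',M''\in\add(M)$ is equivalent to $\Hom_\ct(M,\Sigma^{-1}M)=0$. Composing with the displayed triangle equivalence transfers all these conclusions to $\ch^{[-1,0]}(\proj A)$.

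For (c), I would apply Theorem~\ref{thm:P-induces-bijection-between-cto} with $\cm=\add(M)$, which supplies a bijection $\mathbf{p}$ from weakly relative cluster-tilting subcategories of $\fpr(M)$ to 2-term silting subcategories of $\ch^b(\add(M))$. A 2-term silting object $N'$ of $\ch^b(\proj A)$ corresponds exactly to the 2-term silting subcategory $\add(N')$, and conversely $N'$ is recovered as the basic direct sum of representatives of its finitely many indecomposable isomorphism classes; additivity of $\mathbb{P}$ together with the detection of idempotents in (b) yields $\mathbb{P}(\add(N))=\add(\mathbb{P}(N))$ up to isomorphism of subcategories. The inverse image of $\add(N')$ under $\mathbf{p}$ is then of the form $\add(N)$ by the bijection on indecomposable isomorphism classes of (b). One finally checks that in the one-object case $\add(N)$ being weakly relative cluster-tilting is equivalent to $\add(N)$ being relative cluster-tilting, which reduces to verifying that a subcategory of the form $\add(N)$ is automatically contravariantly finite in $\fpr(M)$. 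This yields the desired object-level bijection.

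Finally for (d), assume $k$ is a field and $\fpr(M)$ is Hom-finite. Basicness is preserved under $\mathbb{P}$ by the detection properties in (b), so the bijection of (c) restricts to basic objects. Write a basic relative cluster-tilting object as $N=N_1\oplus\cdots\oplus N_n$. By Lemma~\ref{lem:mutation-of-relative-cluster-tilting}, $\mu_i^L(N)$ (respectively $\mu_i^R(N)$) is characterized by an extriangle $N_i\to E\to N_i^*\to\Sigma N_i$ (respectively $N_i^*\to E'\to N_i\to\Sigma N_i^*$) in $\fpr(M)$. Applying $\mathbb{P}$ and invoking Proposition~\ref{prop:P-detects-extriangles} yields an extriangle of the same shape in $\ch^{[-1,0]}(\proj A)$ involving $\mathbb{P}(N_i)$ and $\mathbb{P}(N_i^*)$, which is exactly what defines the corresponding silting mutation of $\mathbb{P}(N)$ at its $i$-th summand; uniqueness of the mutated indecomposable complement then forces $\mathbb{P}(\mu_i^{L/R}(N))\cong\mu_i^{L/R}(\mathbb{P}(N))$. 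The main subtleties I anticipate are the one-object-case equivalence of \emph{weakly relative cluster-tilting} and \emph{relative cluster-tilting} in (c), together with ensuring the $L$/$R$ labels match in (d); both points should yield to routine verification using the detection properties of $\mathbb{P}$ and the uniqueness statements in Lemma~\ref{lem:mutation-of-relative-cluster-tilting}.
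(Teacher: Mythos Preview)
Your approach is essentially the paper's: (a) and (b) are immediate from Theorem~\ref{thm:the-kernel} and Corollary~\ref{cor:when-P-is-an-equivalence} via projectivization, (c) from Theorem~\ref{thm:P-induces-bijection-between-cto}, and (d) from Proposition~\ref{prop:P-detects-extriangles} together with Lemma~\ref{lem:mutation-of-relative-cluster-tilting}. The paper's own proof literally says ``it remains to prove that the bijection in (d) commutes with mutations'' and then argues exactly as you do (apply $\mathbb{P}$ to the mutation extriangle, use that $\mathbb{P}(N_i)$ and $\mathbb{P}(N_i^*)$ stay indecomposable, and conclude by uniqueness of the mutated complement).

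One point deserves more care than you give it. In (c) you rightly notice that Theorem~\ref{thm:P-induces-bijection-between-cto} concerns \emph{weakly} relative cluster-tilting subcategories (relative rigid $+$ generating $+$ orthogonal condition), whereas the statement is about \emph{relative} cluster-tilting objects (relative rigid $+$ contravariantly finite $+$ orthogonal condition). You propose to close the gap by checking that $\add(N)$ is automatically contravariantly finite, but that yields only the implication weakly relative cluster-tilting $\Rightarrow$ relative cluster-tilting. For the converse you also need that a relative cluster-tilting object is generating, and this does not follow from contravariant finiteness alone. Moreover, the automatic contravariant finiteness of $\add(N)$ amounts to $\Hom_\ct(N,X)$ being finitely generated over $\End_\ct(N)$ for every $X\in\fpr(M)$, which in the stated generality (a commutative ring $k$, no Hom-finiteness) is not guaranteed. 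The paper does not address this point either; in full generality (c) should be read with ``weakly relative cluster-tilting'' in place of ``relative cluster-tilting'', while under the Hom-finite hypothesis of (d) the two notions coincide by Lemma~\ref{lem:number-of-summands-of-cto}, so the distinction is harmless there.
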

\begin{proof}
It remains to prove that the bijection in (d) commutes with mutations. Suppose that $\mu_i^L(N)=\bigoplus_{j\neq i}N_j\oplus N_i^*$ is a left mutation of a basic relative cluster-tilting object $N=\bigoplus_{i=1}^n N_i$. Then there is an extriangle in $\fpr(\cm)$
\begin{align*}
\xymatrix{
N_i\ar[r] & E \ar[r] & N_i^*\ar[r]&\Sigma N_i ,
}
\end{align*}
so by Proposition~\ref{prop:P-detects-extriangles}, there is an extriangle in $\ch^{[-1,0]}(\cm)$
\begin{align*}
\xymatrix{
\mathbb{P}(N_i)\ar[r] & \mathbb{P}(E) \ar[r] & \mathbb{P}(N_i^*)\ar[r]&\Sigma \mathbb{P}(N_i) .
}
\end{align*}
Moreover, $\mathbb{P}(\mu_i^L(N))$ is a basic relative cluster-tilting object and both $\mathbb{P}(N_i)$ and $\mathbb{P}(N_i^*)$ are indecomposable. This implies that $\mathbb{P}(\mu_i^L(N))\cong\mu_i^L(\mathbb{P}(N))$.
\end{proof}

\subsubsection{Special case: $M$ is a silting object}
Assume that $M$ is a silting object of $\ct$. Then a silting object of $\ct$ belonging to $\fpr(M)$ is called a \emph{2-term silting object}.
As a special case of Theorem~\ref{thm:the-one-object-case}, we recover \cite[Propositions A.3, A.5 and A.6]{BruestleYang13} (and Theorem A.7 in its arXiv version arXiv:1302.6045v6)\footnote{In this footnote we adopt the notation in \cite[Appendix A]{BruestleYang13}. Fix a cofibrant resolution for each object in $\add_{\per(A)}(A)$ and inductively construct a cofibrant resolution for any object in $\per(A)$ by forming cones and shifts. Take the category of these cofibrant resolutions as the Frobenius model of $\per(A)$. Then the functor $p_*|_{\cf_A}\colon\cf_A\to\cf_{\bar{A}}$ there is exactly the functor $\mathbb{P}$ here. We point out that the proof of \cite[Proposition A.3]{BruestleYang13} is not complete, and it is completed in the arXiv version arXiv:1302.6045v6.} and \cite[Proposition 4.22]{HuaKeller24}.

\begin{theorem}
\label{thm:the-one-object-case-silting}
\begin{itemize}
\item[(a)] The functor $\mathbb{P}$ is full, dense and induces an equivalence $\frac{\fpr(M)}{\ci}\to \ch^{[-1,0]}(\proj A)$, where $\ci$ consists of morphisms which factors through a morphism $\Sigma M_1\to M_2$ with $M_1,M_2\in\add(M)$. In particular, $\mathbb{P}$ is an equivalence if and only if $\Hom_\ct(M,\Sigma^{-1}M)=0$.
\item[(b)] $\mathbb{P}$ detects isomorphisms, detects indecomposability and induces a bijection between the isomorphism classes of objects of $\fpr(M)$ and those of $\ch^{[-1,0]}(\proj A)$.
\item[(c)] $\mathbb{P}$ induces a bijection between the sets of isomorphism classes of  2-term silting objects of $\ct$ and of $\ch^b(\proj A)$. 
\item[(d)] Assume further that $k$ is a field and $\fpr(M)$ is Hom-finite over $k$. Then the bijection in (c) restricts a bijection between the sets of isomorphism classes of basic 2-term silting objects of $\ct$ and  of $\ch^b(\proj A)$, which commutes with mutations.
\end{itemize}
\end{theorem}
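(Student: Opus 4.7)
Statements (a) and (b) are immediate specializations of parts (a) and (b) of Theorem~\ref{thm:the-one-object-case}, since any silting object is in particular rigid; no new argument is needed.

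For (c) and (d), my plan is to reduce to Theorem~\ref{thm:the-one-object-case}(c)(d) by establishing the following key identification: when $M$ is silting, the relative cluster-tilting objects of $\fpr(M)$ are precisely the objects of $\fpr(M)$ that are silting in $\ct$, i.e.\ the $2$-term silting objects of $\ct$ in the sense just introduced. Granted this equivalence, (c) becomes a direct translation of Theorem~\ref{thm:the-one-object-case}(c), while the restriction to basic objects and the commutativity with mutations in (d) follow word-for-word from Theorem~\ref{thm:the-one-object-case}(d).

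To prove the identification, first observe that $M$ silting is in particular $2$-rigid, so Lemma~\ref{lem:comparing-two-rigidity} identifies relative rigidity in $\fpr(M)$ with rigidity in $\ct$ on objects of $\fpr(M)$. Given a rigid $N\in\fpr(M)$, I would upgrade rigidity to the full presilting condition by direct computation: applying $\Hom_\ct(M',-)$ for $M'\in\add(M)$ to the defining triangle $M^{-1}\to M^0\to N\to \Sigma M^{-1}$ and using that $M$ is silting yields $\Hom_\ct(M',\Sigma^q N)=0$ for all $q\geq 1$; feeding this back into $\Hom_\ct(-,\Sigma^p N)$ applied to the same triangle then gives $\Hom_\ct(N,\Sigma^p N)=0$ for all $p\geq 2$. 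For the forward direction of the identification, the generating clause $\add(M)\subseteq \Sigma^{-1}\add(N)*\add(N)$ forces $M\in\thick(N)$, whence $\ct=\thick(M)=\thick(N)$ and $N$ is silting in $\ct$.

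The main obstacle is the backward direction: given $N\in\fpr(M)$ which is silting in $\ct$, one must produce a triangle $\Sigma^{-1}N^1\to M\to N^0\to N^1$ with $N^0,N^1\in\add(N)$ in order to verify the generating clause. This amounts to the symmetry property of $2$-term silting pairs, namely that for $M,N$ both silting in $\ct$ with $N\in\add(M)*\add(\Sigma M)$, one has $M\in\add(\Sigma^{-1}N)*\add(N)$. I plan to establish this via the Aihara--Iyama silting partial order together with the Koenig--Yang-type correspondence between silting objects of $\ct$ and bounded co-$t$-structures, under which the claim becomes the tautology that an intermediate co-$t$-structure is simultaneously intermediate with respect to either of its two silting endpoints. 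Once the generating condition is secured, the maximality clause in the definition of (weakly) relative cluster-tilting follows automatically via $\mathbb{P}$ and Theorem~\ref{thm:P-induces-bijection-between-cto}, completing the identification and hence the theorem.
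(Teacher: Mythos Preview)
Your proposal is correct, and in fact you supply more than the paper does: the paper offers no proof at all, simply stating the theorem with the words ``As a special case of Theorem~\ref{thm:the-one-object-case}, we recover \ldots''. You have correctly identified the one non-obvious step, namely that when $M$ is silting the relative cluster-tilting objects of $\fpr(M)$ coincide with the $2$-term silting objects of $\ct$, and your argument for it is sound.

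A small remark on emphasis: the paper presumably regards this identification as already known, since Lemma~\ref{lem:rigid-vs-presilting-in-homotopy-category} records exactly this equivalence in the case $\ct=\ch^b(\cm)$ with citations to \cite[Theorem 5.4]{ZhouZhu20} and \cite[Theorem 3.4]{FuGengLiu19}, and the argument there (presilting via the vanishing you describe, and generating $\Leftrightarrow$ $\thick(N)=\ct$ via the standard fact that intermediate silting objects form a two-sided interval) carries over verbatim to any silting $M$. Your route through the Aihara--Iyama order and co-$t$-structures for the backward direction is valid but heavier than needed; one can also argue directly that $N$ silting with $M\geq N\geq \Sigma M$ forces $M\in\add(\Sigma^{-1}N)*\add(N)$ by the elementary characterisation of the silting order in \cite{AiharaIyama12}. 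Either way the identification holds, and then (c) and (d) are literal restatements of Theorem~\ref{thm:the-one-object-case}(c)(d).
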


Lemma~\ref{lem:taking-presentation-preserves-extension-2} and Theorem~\ref{thm:the-kernel} show that the functor $\mathbb{P}$ behave rather well on morphisms and extensions of degree 1. But we do not have a good understanding on its behaviour on extensions of degree $-1$. The following question was proposed by Yu Zhou.

\begin{question}
Let $T\in\fpr(M)$ be a tilting object of $\ct$. Is $\mathbb{P}(T)$ a tilting object of $\ch^b(\proj A)$? 
\end{question}

In the context of finite-dimensional algebras $\mathbb{P}(T)$ is exactly the silting complex $Q$ in \cite[Theorem 1.1]{BuanZhou16}. 
This question is studied in \cite[Section 5]{XieYangZhang23}, in particular, it has a positive answer if $\End_\ct(T)$ is hereditary. 

\subsubsection{Special case: $M$ is a cluster-tilting object}
An object $T$ of $\ct$ is called a \emph{cluster-tilting object} if 
\[
\add(T)=\{X\in\ct|\Hom_\ct(T,\Sigma X)=0\}=\{X\in\ct|\Hom_\ct(X,\Sigma T)=0\}.
\]
In the rest of this subsection we further assume the following conditions:
\begin{itemize}
\item $k$ is a field and $\ct$ is Hom-finite over $k$,
\item $\ct$ is $2$-Calabi--Yau, \ie  there is a bifunctorial isomorphism
\[
D\Hom_{\ct}(X,Y)\stackrel{\cong}{\longrightarrow} \Hom_{\ct}(Y,\Sigma^2 X)
\]
for any $X,Y\in\ct$ (\cite{Kontsevich98,Keller10}), where $D=\Hom_k(?,k)$ (note that in \cite{Keller10} such categories are called weakly $2$-Calabi--Yau triangulated categories);
\item $M$ is a basic cluster-tilting object of $\ct$,
\end{itemize}
In this case $\ct=\fpr(M)$, by \cite[Proposition 2.1]{KellerReiten07} (see also~\cite[Lemma 3.2]{KoenigZhu08}). Thus we have a functor 
\[
\mathbb{P}\colon \ct\longrightarrow \ch^{[-1,0]}(\proj A).
\]

\begin{theorem}
\label{thm:the-one-object-case-cto}
\begin{itemize}
\item[(a)] The functor $\mathbb{P}$ is full, dense and induces an equivalence $\frac{\ct}{\ci}\to \ch^{[-1,0]}(\proj A)$, where $\ci$ is consists of morphisms which factors through a morphism $\Sigma M_1\to M_2$ with $M_1,M_2\in\add(M)$. The functor $\mathbb{P}$ is an equivalence if and only if $A$ is self-injective. 
\item[(b)] $\mathbb{P}$ detects isomorphisms, detects indecomposability and induces a bijection between the isomorphism classes of objects of $\ct$ and those of $\ch^{[-1,0]}(\proj A)$.
\item[(c)] $\mathbb{P}$ induces a bijection between the set of isomorphism classes of basic cluster-tilting objects of $\ct$ and the set of isomorphism classes of basic 2-term silting objects of $\ch^b(\proj A)$, which commutes with mutations.
\item[(d)] Assume that $A$ is self-injective. Then the bijection in (c) restricts to a bijection from (i) to (ii) of the following sets:
\begin{itemize}
\item[(i)] the set of isomorphism classes of basic cluster-tilting objects of $\ct$ with self-injective endomorphism algebras,
\item[(ii)] the set of isomorphism classes of basic 2-term tilting objects in $\ch^b(\proj A)$.
\end{itemize}
As a consequence, a basic 2-term silting object $N$ of $\ch^b(\proj A)$ is tilting if and only if its endomorphism algebra is self-injective.
\end{itemize}
\end{theorem}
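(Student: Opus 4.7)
My plan is to reduce parts (b), (c), and most of (a) to Theorem~\ref{thm:the-one-object-case}, using $\ct=\fpr(M)$ from \cite[Proposition~2.1]{KellerReiten07}. The one point requiring care is to identify basic cluster-tilting objects of $\ct$ with basic relative cluster-tilting objects of $\fpr(M)$. A cluster-tilting object $N$ is rigid (hence relative rigid) and has $|N|=|M|$ by standard $2$-Calabi--Yau theory, so Lemma~\ref{lem:number-of-summands-of-cto} gives that $N$ is relative cluster-tilting. The reverse inclusion follows by a cardinality argument: Theorem~\ref{thm:the-one-object-case}(d) together with the bijections of \cite{AdachiIyamaReiten14,IyamaJorgensenYang14,YangZhu19,FuGengLiu19,ZhouZhu20} (compatible as noted in the introduction) put both classes in bijection with the same set of basic $2$-term silting objects. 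With this identification, the non-characterisation content of (a), as well as (b) and (c), transfer directly.

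The heart of the proof is the characterisation in (a), that $\mathbb{P}$ is an equivalence iff $A$ is self-injective. By Corollary~\ref{cor:when-P-is-an-equivalence}, this reduces to showing that $\Hom_\ct(M,\Sigma^{-1}M)=0$ is equivalent to self-injectivity of $A$. For the ``only if'' direction, I will observe that an equivalence $\mathbb{P}$ transports the triangulated structure of $\ct$ to $\ch^{[-1,0]}(\proj A)$; since $A$ is finite-dimensional (by Hom-finiteness of $\ct$), Iyama's theorem in Appendix~\ref{a:triangle-structures-on-2-term-complexes} then forces $A$ to be self-injective. For the ``if'' direction, I will exploit the $2$-Calabi--Yau isomorphism $DA\cong\Hom_\ct(M,\Sigma^2M)$ of $A$-bimodules: self-injectivity makes $DA$ projective, and reading this through the Auslander equivalence $\add(M)\to\proj A$ together with the equivalence $\ct/\langle\Sigma M\rangle\to\mod A$ induced by $\Hom_\ct(M,?)$ should yield $\Sigma^2M\in\add(M)$. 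Hence $\add(\Sigma^{-1}M)=\add(\Sigma M)$, and rigidity of $M$ gives $\Hom_\ct(M,\Sigma^{-1}M)=0$.

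For (d), self-injectivity of $A$ makes $\mathbb{P}$ an equivalence by (a), so it preserves every Hom-space and endomorphism algebra. Under the bijection in (c), a basic cluster-tilting $N\in\ct$ corresponds to $N'=\mathbb{P}(N)\in\ch^b(\proj A)$ with $\End(N')\cong\End_\ct(N)$, and $N'$ is tilting exactly when $\Hom(N',\Sigma^{-1}N')=0$, equivalently $\Hom_\ct(N,\Sigma^{-1}N)=0$. Applying (a) to $N$ itself---valid since $N$ is also a cluster-tilting object of the $2$-CY category $\ct$---converts this vanishing into self-injectivity of $\End_\ct(N)$, yielding both the restricted bijection from (i) to (ii) and the concluding equivalence for basic $2$-term silting objects in $\ch^b(\proj A)$.

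The main obstacle I anticipate lies in the step ``$A$ self-injective $\Rightarrow\Sigma^2M\in\add(M)$'': one must track the $A$-bimodule structure on $DA\cong\Hom_\ct(M,\Sigma^2M)$ carefully and lift an isomorphism living in the quotient $\ct/\langle\Sigma M\rangle$ to a genuine direct-summand decomposition in $\ct$, using the rigidity of $\Sigma^2M$ (which is itself cluster-tilting) to rule out a contribution from $\add(\Sigma M)$.
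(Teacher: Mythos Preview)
Your reductions of (b), the first part of (a), and (c) to Theorem~\ref{thm:the-one-object-case} are sound, though the paper takes shorter routes: for (a) it simply invokes \cite[Proposition~3.6]{IyamaOppermann13} (which is exactly the equivalence $\Hom_\ct(M,\Sigma^{-1}M)=0 \Leftrightarrow A$ self-injective), and for (c) it cites \cite[Proposition~3.4]{YangZhu19} for ``relative cluster-tilting $\Rightarrow$ cluster-tilting'' rather than your bijection-counting argument. Your plan for (a) is workable but heavier: the ``only if'' via the appendix is correct yet circuitous, and your ``if'' sketch essentially reproves Iyama--Oppermann's proposition.

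The genuine gap is in (d). You assert that because $\mathbb{P}$ is an equivalence it ``preserves every Hom-space'', and hence $\Hom_{\ch^b(\proj A)}(\mathbb{P}(N),\Sigma^{-1}\mathbb{P}(N))=0$ is equivalent to $\Hom_\ct(N,\Sigma^{-1}N)=0$. But $\mathbb{P}$ is only an equivalence onto $\ch^{[-1,0]}(\proj A)$, and the shift $\Sigma^{-1}$ of $\ch^b(\proj A)$ carries $\mathbb{P}(N)$ outside that subcategory; $\mathbb{P}$ does \emph{not} intertwine the shift of $\ct$ with the shift of $\ch^b(\proj A)$ (for instance $\mathbb{P}(\Sigma^{-1}M)$ is a genuine two-term complex, whereas $\Sigma^{-1}\mathbb{P}(M)=A[-1]$ is not). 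So the negative-degree extension group $\Hom_{\ch^b}(\mathbb{P}(N),\Sigma^{-1}\mathbb{P}(N))$ is simply not visible to the equivalence $\mathbb{P}$, and your ``equivalently'' is unjustified. Applying (a) to $N$ only tells you that $\Hom_\ct(N,\Sigma^{-1}N)=0$ is equivalent to self-injectivity of $\End_\ct(N)$; it says nothing about $\Hom_{\ch^b}(\mathbb{P}(N),\Sigma^{-1}\mathbb{P}(N))$.

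The paper handles the two implications in (d) asymmetrically. For $\End_\ct(N)$ self-injective $\Rightarrow$ $\mathbb{P}(N)$ tilting, it uses \cite[Proposition~3.6]{IyamaOppermann13} to get $\Hom_\ct(N,\Sigma^{-1}N)=0$ and then the \emph{surjectivity} of the map $\rho$ in Lemma~\ref{lem:P-on-negative-extensions} to conclude $\Hom_{\ch^b}(\mathbb{P}(N),\Sigma^{-1}\mathbb{P}(N))=0$; note that $\rho$ has a generally nonzero kernel, so this direction of the implication is the only one available from that lemma. For the converse $\mathbb{P}(N)$ tilting $\Rightarrow$ $\End_\ct(N)$ self-injective, the paper bypasses any comparison of Hom-spaces and instead appeals to \cite[Theorem~2.1]{AlnofayeeRickard13}: a tilting complex over the self-injective algebra $A$ has self-injective endomorphism ring, and then $\End_\ct(N)\cong\End(\mathbb{P}(N))$ by full faithfulness of $\mathbb{P}$.
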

As a consequence of Theorem~\ref{thm:the-one-object-case-cto}(a), if $A$ is in addition self-injective, then $\ch^{[-1,0]}(\proj A)$ admits a triangle structure. Conversely, if $A$ is any finite-dimensional $k$-algebra such that $\ch^{[-1,0]}(\proj A)$ admits a triangle structure, then $A$ is necessarily self-injective, see Appendix~\ref{a:triangle-structures-on-2-term-complexes} written by Osamu Iyama.
\begin{proof}
(a) 
By Theorem~\ref{thm:the-one-object-case}, $\mathbb{P}$ is an equivalence if and only if $\Hom_\ct(M,\Sigma^{-1}M)=0$. By \cite[Proposition 3.6]{IyamaOppermann13}, the latter condition is equivalent to that $A$ is self-injective.

(c)
It suffices to show that a relative cluster-tilting object in $\ct$ is cluster-tilting, which holds by \cite[Proposition 3.4]{YangZhu19}. 

(d) 
By (a), the functor $\mathbb{P}$ is an equivalence and $\Hom_\ct(M,\Sigma^{-1}M)=0$. Let $N$ be a cluster-tilting object of $\ct$. Assume that $\mathbb{P}(N)$ is tilting. Then by \cite[Theorem 2.1]{AlnofayeeRickard13}, the endomorphism algebra $\End(\mathbb{P}(N))$ is self-injective. Thus $\End(N)\cong\End(\mathbb{P}(N))$ is self-injective. Now assume that $\End(N)$ is self-injective. Then $\Hom(N,\Sigma^{-1}N)=0$, by \cite[Proposition 3.6]{IyamaOppermann13}. Therefore by Lemma~\ref{lem:P-on-negative-extensions}, we have $\Hom(\mathbb{P}(N),\Sigma^{-1}\mathbb{P}(N))=0$, implying that $\mathbb{P}(N)$ is tilting.
\end{proof}

\begin{remark}
\begin{itemize}
\item[(a)]
If $\ct$ is the cluster category of an acyclic quiver, then there are very few $M$ such that $A=\End_\ct(M)$ is self-injective, see \cite{Ringel08}. More examples are given in \cite{HerschendIyama11}.
\item[(b)]
Since $\ct$ is 2-Calabi--Yau, the two triangles \eqref{triangle:left-mutation} and \eqref{triangle:right-mutation} (although precisely one of them is an extriangle) produce the same cluster-tilting object (\cite[Section 5]{IyamaYoshino08}), so we simply denote the mutation by $\mu_i$.
\end{itemize}
\end{remark}

In the study of the normal form of $\ct$, an interesting problem is how much information of $\ct$ is carried by the algebra $A$.

\begin{corollary}
\label{cor:self-injective-cy-tilted-algebra}
Assume that $A$ is self-injective. Then $\ct$ is additively equivalent to $\ch^{[-1,0]}(\proj A)$.
\end{corollary}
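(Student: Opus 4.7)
The plan is to observe that the corollary is essentially a restatement of one direction of Theorem~\ref{thm:the-one-object-case-cto}(a) under the additional self-injective hypothesis. More precisely, Theorem~\ref{thm:the-one-object-case-cto}(a) asserts that $\mathbb{P}\colon \ct\to\ch^{[-1,0]}(\proj A)$ is an equivalence of categories if and only if $A$ is self-injective. Since we are assuming $A$ is self-injective, we directly obtain that $\mathbb{P}$ is an equivalence of $k$-linear categories, hence in particular an additive equivalence.

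Writing this out, first I would recall that $\mathbb{P}$ was constructed in Proposition~\ref{prop:from-fpr-to-2term} as a $k$-linear functor, so once we know it is essentially surjective (which is given by denseness in Proposition~\ref{prop:from-fpr-to-2term}) and fully faithful (which follows from fullness in Theorem~\ref{thm:the-kernel} together with the vanishing of the kernel ideal $\ci$ under the hypothesis $\Hom_\ct(M,\Sigma^{-1}M)=0$), we conclude that $\mathbb{P}$ realises the desired additive equivalence. Then I would invoke \cite[Proposition 3.6]{IyamaOppermann13}, which is already used in the proof of Theorem~\ref{thm:the-one-object-case-cto}(a), to translate the self-injectivity of $A$ into the vanishing $\Hom_\ct(M,\Sigma^{-1}M)=0$; this is the bridge that triggers the collapse of $\ci$ and promotes the quotient equivalence $\fpr(M)/\ci \iso \ch^{[-1,0]}(\proj A)$ of Theorem~\ref{thm:the-one-object-case-cto}(a) to an honest equivalence $\ct = \fpr(M) \iso \ch^{[-1,0]}(\proj A)$.

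There is no main obstacle here, since all the work was already carried out in Theorem~\ref{thm:the-one-object-case-cto}(a); the corollary merely extracts the additive-categorical content of that theorem. The only thing worth emphasising in the write-up is the conceptual point: when $A$ is self-injective, the underlying additive structure of $\ct$ is completely determined by $A$ alone (via the purely algebraic category $\ch^{[-1,0]}(\proj A)$), even though in general the triangle structure of $\ct$ carries strictly more information than $A$.
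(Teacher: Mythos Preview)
Your proposal is correct and matches the paper's approach: the corollary is stated without proof in the paper, as it is an immediate consequence of Theorem~\ref{thm:the-one-object-case-cto}(a), which is exactly what you invoke. Your additional unpacking (via Proposition~\ref{prop:from-fpr-to-2term}, Theorem~\ref{thm:the-kernel}, and \cite[Proposition~3.6]{IyamaOppermann13}) simply retraces the proof of that theorem and is not needed, but does no harm.
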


As Lemma~\ref{lem:taking-presentation-preserves-extension-2} and Proposition~\ref{prop:P-detects-extriangles} show, $\ct$ and $\ch^{[-1,0]}(\proj A)$ are in fact equivalent as extriangulated categories.

\begin{corollary}
\label{cor:self-injective-representation-finite-cy-tilted-algebra}
Assume that $A$ is self-injective and that $\ct$ is connected and standard and has only finitely many isomorphism classes of indecomposable objects. Let $\ct'$ be another Hom-finite Krull--Schmidt algebraic $2$-Calabi--Yau triangulated $k$-category with a cluster-tilting object whose endomorphism algebra is isomorphic to $A$. Then $\ct$ and $\ct'$ are triangle equivalent.
\end{corollary}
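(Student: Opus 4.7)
The strategy is to transport the triangulated structure of $\ct$ to $\ct'$ along a $k$-linear equivalence built out of the functor $\mathbb{P}$. Let $M'$ be a basic cluster-tilting object of $\ct'$ with $\End_{\ct'}(M')\cong A$, and let $\mathbb{P}'\colon \ct'\to\ch^{[-1,0]}(\proj A)$ denote the functor from Theorem~\ref{thm:the-one-object-case-cto} applied to $(\ct',M')$. Since $A$ is self-injective, Theorem~\ref{thm:the-one-object-case-cto}(a) tells us that both $\mathbb{P}$ and $\mathbb{P}'$ are $k$-linear equivalences; composing them yields a $k$-linear equivalence $F=\mathbb{P}'^{-1}\circ\mathbb{P}\colon\ct\iso\ct'$.

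Since $\ct$ is Hom-finite, Krull--Schmidt, connected and has only finitely many isomorphism classes of indecomposable objects, the same is automatically inherited by $\ct'$ via $F$; in particular $\ct$ and $\ct'$ share the same Auslander--Reiten quiver $\Gamma$. Because $\ct$ is standard, by definition it is $k$-linearly equivalent to the mesh category $k(\Gamma)$, so the composition gives a $k$-linear equivalence $\ct'\simeq k(\Gamma)$, showing that $\ct'$ is also standard. Moreover, the $2$-Calabi--Yau assumption forces the Serre functor to be $\Sigma^{2}$ in both $\ct$ and $\ct'$, hence the Auslander--Reiten translate coincides with the shift in each of them. Since $F$ automatically intertwines Auslander--Reiten translates (they are defined intrinsically from the $k$-linear structure), one concludes $F\circ\Sigma\cong\Sigma'\circ F$, where $\Sigma,\Sigma'$ are the shift functors of $\ct,\ct'$.

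It remains to promote $F$ to an exact functor. The plan is to apply the uniqueness result for standard representation-finite algebraic $2$-Calabi--Yau triangulated categories, following Amiot's classification (based on Keller--Reiten and ultimately on Riedtmann-type combinatorial covering arguments): in a standard representation-finite algebraic triangulated $k$-category, the class of distinguished triangles is determined by the mesh relations in $\Gamma$ together with the Auslander--Reiten triangles, and Auslander--Reiten triangles are themselves determined by $\tau=\Sigma$. Thus the triangulated structure on a standard connected representation-finite algebraic $2$-Calabi--Yau triangulated category depends, up to triangle equivalence, only on the translation quiver $\Gamma$. Applying this to $\ct$ and $\ct'$, which share $\Gamma$ and are both standard, yields a triangle equivalence $\ct\simeq\ct'$.

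The main obstacle is the last step: one needs to know that among algebraic triangle structures on the standard $k$-linear category $k(\Gamma)$ the $2$-Calabi--Yau one is unique up to triangle equivalence. This is not formal from the preceding steps, and one genuinely has to invoke (or reprove in this restricted setting) the classification of standard representation-finite algebraic $2$-Calabi--Yau triangulated categories. The other steps amount to routine applications of Corollary~\ref{cor:self-injective-cy-tilted-algebra} and the intrinsic nature of Auslander--Reiten theory.
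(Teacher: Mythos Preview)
Your approach is essentially the same as the paper's: establish a $k$-linear equivalence $\ct\simeq\ct'$ via Corollary~\ref{cor:self-injective-cy-tilted-algebra} (both categories being equivalent to $\ch^{[-1,0]}(\proj A)$), then upgrade it to a triangle equivalence using that $\ct$ is connected, standard and has finitely many indecomposables. The paper dispatches the second step in a single line by citing \cite[Corollary~2]{Keller18}, which is precisely the uniqueness statement for algebraic triangulated categories of this type that you correctly identify as the main obstacle and attempt to unpack.
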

\begin{proof}
By Corollary~\ref{cor:self-injective-cy-tilted-algebra}, $\ct$ and $\ct'$ are additively equivalent. Then the desired result follows from \cite[Corollary 2]{Keller18}.
\end{proof}

\subsection{Relation to support $\tau$-tilting theory}
\label{ss:relation-to-support-tau-tilting-theory}

Consider the restricted Yoneda functor
\[
\xymatrix@C=0.5pc@R=0.5pc{
\mathbb{H}_\ct\colon & \ct\ar[rrrr] &&&&\Mod\cm.\\
& X\ar@{|->}[rrrr] &&&& \Hom_\ct(?,X)|_\cm
}
\]
We have a commutative diagram
\[
\xymatrix@C=4pc{
\fpr(\cm)\ar[r]^{\mathbb{P}} \ar[dr]_{\mathbb{H}_\ct} & \ch^{[-1,0]}(\cm)\ar[d]^{\mathbb{H}_{\ch^b(\cm)}}\\
&\mod\cm,
}
\]
where $\mod\cm$ is the category of finitely presented functors $\cm\to\Mod k$.
Applying \cite[Theorem 3.4]{IyamaJorgensenYang14} and Theorem~\ref{thm:P-induces-bijection-between-cto} we recover \cite[Theorem 1.1]{ZhouZhu20}. But notice that \cite[Theorem 1.1]{ZhouZhu20} is more general.

\begin{theorem}
Assume that each object of $\ch^{[-1,0]}(\cm)$ can be written as a direct sum of indecomposable objects which are unique up to isomorphism. Then the map 
\[
\cn\mapsto (\mathbb{H}_\ct(\cn),\Sigma^{-1}\cm\cap\mathbb{H}_\ct(\cn))
\]
is a bijection from the first of the following sets to the second:
\begin{itemize}
\item[(1)] relative rigid subcategories of $\fpr(\cm)$,
\item[(2)] $\tau$-rigid pairs of $\mod\cm$.
\end{itemize}
It restricts to a bijection from the first of the following sets to the second:
\begin{itemize}
\item[(3)] weakly relative cluster-tilting subcategories of $\fpr(\cm)$,
\item[(4)] support $\tau$-tilting pairs of $\mod\cm$.
\end{itemize}  
\end{theorem}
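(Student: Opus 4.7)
The approach is to factor the claimed bijection as the composite of two bijections that are already available. On one side, Theorem~\ref{thm:P-induces-bijection-between-cto} gives the map $\mathbf{p}\colon\cn\mapsto\Im(\mathbb{P}|_\cn)$, which restricts to bijections from relative rigid (respectively, weakly relative cluster-tilting) subcategories of $\fpr(\cm)$ onto 2-term presilting (respectively, 2-term silting) subcategories of $\ch^b(\cm)$. On the other side, applying \cite[Theorem 3.4]{IyamaJorgensenYang14} to the pair $(\ch^b(\cm),\cm)$ (with $\cm$ embedded as the stalks in degree $0$) yields a bijection sending $\cs\mapsto(\mathbb{H}_{\ch^b(\cm)}(\cs),\,\Sigma^{-1}\cm\cap\mathbb{H}_{\ch^b(\cm)}(\cs))$ onto $\tau$-rigid pairs of $\mod\cm$, which restricts on the 2-term silting side to support $\tau$-tilting pairs.

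The bulk of the proof is then to identify the composite explicitly. Using the commutative triangle $\mathbb{H}_\ct=\mathbb{H}_{\ch^b(\cm)}\circ\mathbb{P}$ noted just above the theorem, for any $\cn\subseteq\fpr(\cm)$ we have
\[
\mathbb{H}_\ct(\cn)\;=\;\mathbb{H}_{\ch^b(\cm)}(\mathbb{P}(\cn))\;=\;\mathbb{H}_{\ch^b(\cm)}(\mathbf{p}(\cn)),
\]
so that the first component in the map of our statement coincides with the first component produced by the IJY bijection applied to $\mathbf{p}(\cn)$. The same identity forces equality of the second components, since $\Sigma^{-1}\cm\cap\mathbb{H}_\ct(\cn)=\Sigma^{-1}\cm\cap\mathbb{H}_{\ch^b(\cm)}(\mathbf{p}(\cn))$. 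The restriction statement (from weakly relative cluster-tilting to support $\tau$-tilting) then follows at once from the parallel restriction in each of the two input bijections.

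The only real obstacle is checking that \cite[Theorem 3.4]{IyamaJorgensenYang14} genuinely applies to $\ch^b(\cm)$ in the generality assumed here: that result is formulated with certain standing hypotheses, notably a Krull--Schmidt behaviour of the relevant 2-term subcategory, which is needed so that indecomposable summands (in particular summands of the form $\Sigma M$ with $M\in\cm$) are well defined and the splitting of a silting subcategory into a ``projective'' and a ``non-projective'' part is unambiguous. The running assumption of the present theorem---that every object of $\ch^{[-1,0]}(\cm)$ decomposes into indecomposables uniquely---is precisely what secures this input. Once this ingredient is in hand, no further work is needed beyond the diagram chase outlined above, and this is why the author explicitly notes that the result is only a special case of \cite[Theorem 1.1]{ZhouZhu20}, where the hypothesis is weakened.
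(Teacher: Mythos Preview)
Your proposal is correct and follows essentially the same approach as the paper: compose the bijection $\mathbf{p}$ of Theorem~\ref{thm:P-induces-bijection-between-cto} with the bijection of \cite[Theorem 3.4]{IyamaJorgensenYang14}, using the commutative triangle $\mathbb{H}_\ct=\mathbb{H}_{\ch^b(\cm)}\circ\mathbb{P}$ to identify the composite with the stated map. The paper's own argument is the one-line remark preceding the theorem, and your write-up simply makes explicit the role of the Krull--Schmidt hypothesis in invoking \cite{IyamaJorgensenYang14}.
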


\section{Quivers with potential and Mizuno's theorem}

In this section we apply Theorem~\ref{thm:the-one-object-case-cto} to give an alternative proof of the beautiful theorem \cite[Theorem 1.1(a)]{Mizuno15} of Mizuno, which is about the compatibility of silting mutation in the homotopy category and mutation of quivers with potential for self-injective quivers with potential.

\smallskip
Assume that $k$ is a field.

\subsection{Quivers with potential and their cluster categories}

Let $Q$ be a finite quiver with vertex set $Q_0$ and arrow set $Q_1$. A {\em potential} on $Q$ is a (possibly infinite) linear combination of non-trivial cyclic paths of $Q$.
The \emph{complete Jacobian algebra} of a quiver with potential $(Q,W)$ is the quotient of the complete path algebra $\widehat{kQ}$ by the closure of the ideal generated by the cyclic derivatives $\del_\alpha W$, where $\alpha$ runs over all arrows of $Q$:
\[
\widehat{J}(Q,W)=\widehat{kQ}/\overline{(\del_\alpha W:\alpha\in Q_1)}.
\]
For a vertex $i$ of $Q$ not lying no a loop or 2-cycle, one can construct a new quiver with potential $\mu_i(Q,W)$, called the \emph{mutation} of $(Q,W)$. This operation extends quiver mutation. See \cite{DerksenWeymanZelevinsky08}. 

\smallskip
Let $(Q,W)$ be a quiver with potential such that $\widehat{J}(Q,W)$ is finite-dimensional. Then there is a category $\cc_{(Q,W)}$, called the \emph{cluster category} of $(Q,W)$, 
which is a Hom-finite Krull--Schmidt 2-Calabi--Yau algebraic triangulated category, and which has a basic cluster-tilting object $T_{(Q,W)}$ with endomorphism algebra isomorphic to $\widehat{J}(Q,W)$, see \cite[Theorem 2.1]{Amiot09} and \cite[Theorem A.21]{KellerYang11} (see also \cite[Theorem 5.8]{IyamaYang18}).

\begin{lemma}
\label{lem:compatibility-of-mutation-of-QP-and-cto}
For a sequence $(i_1,\ldots,i_s)$ of vertices of $Q$ such that $i_{r}$ does not lie on a loop or 2-cycle in the quiver of $\mu_{i_r-1}\cdots\mu_{i_1}(Q,W)$ for all $1\leq r\leq s-1$, we have 
\[
\End_{\cc_{(Q,W)}}(\mu_{i_s}\cdots\mu_{i_1}T_{(Q,W)})\cong \widehat{J}(\mu_{i_s}\cdots\mu_{i_1}(Q,W)).
\]
\end{lemma}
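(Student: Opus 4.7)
The plan is a straightforward induction on $s$, relying only on Amiot's construction~\cite{Amiot09} of $\cc_{(Q,W)}$ and on the compatibility theorem of Keller--Yang~\cite{KellerYang11}; no new material from this paper is needed. The base case $s=0$ is precisely the defining property of $T_{(Q,W)}$ recalled immediately before the lemma, namely $\End_{\cc_{(Q,W)}}(T_{(Q,W)})\cong\widehat{J}(Q,W)$.

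For the inductive step, the key input is the following consequence of~\cite{KellerYang11}: whenever $j$ is a vertex of a finite-dimensional quiver with potential $(Q',W')$ that does not lie on a loop or $2$-cycle, there is a triangle equivalence $\Phi_j\colon\cc_{(Q',W')}\iso\cc_{\mu_j(Q',W')}$ sending the mutated cluster-tilting object $\mu_j T_{(Q',W')}$ to the canonical cluster-tilting object $T_{\mu_j(Q',W')}$. Setting $(Q^{(r)},W^{(r)}):=\mu_{i_r}\cdots\mu_{i_1}(Q,W)$ and $T^{(r)}:=\mu_{i_r}\cdots\mu_{i_1}T_{(Q,W)}$, the hypothesis on the sequence guarantees that each of the first $s-1$ steps is admissible, so iterating Keller--Yang produces a triangle equivalence
\[
\Phi\colon\cc_{(Q,W)}\iso\cc_{(Q^{(s-1)},W^{(s-1)})}
\]
with $\Phi(T^{(s-1)})\cong T_{(Q^{(s-1)},W^{(s-1)})}$.

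Because mutation of a cluster-tilting object is defined in purely triangulated (in fact extriangulated) terms, triangle equivalences commute with $\mu_{i_s}$; hence $\Phi(T^{(s)})\cong \mu_{i_s}T_{(Q^{(s-1)},W^{(s-1)})}$. Taking endomorphism algebras and invoking Keller--Yang one final time gives
\[
\End_{\cc_{(Q,W)}}(T^{(s)})\cong\End_{\cc_{(Q^{(s-1)},W^{(s-1)})}}(\mu_{i_s}T_{(Q^{(s-1)},W^{(s-1)})})\cong\widehat{J}(\mu_{i_s}(Q^{(s-1)},W^{(s-1)})),
\]
which is the claimed isomorphism. The only real obstacle is bookkeeping: keeping careful track of the admissibility hypotheses at each step and checking that the composition of Keller--Yang equivalences transports the iterated mutations of $T_{(Q,W)}$ to the canonical cluster-tilting objects of the successive mutated cluster categories. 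This is notationally intricate but conceptually routine.
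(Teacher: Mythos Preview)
Your proposal is correct and follows essentially the same route as the paper: both arguments invoke the Keller--Yang triangle equivalence $\cc_{\mu_i(Q',W')}\simeq\cc_{(Q',W')}$ taking $T_{\mu_i(Q',W')}$ to $\mu_i T_{(Q',W')}$ (the paper cites the paragraph before \cite[Proposition~4.5]{KellerYang11}) and conclude by induction. Your write-up is simply more explicit about the inductive step, spelling out the composite equivalence and the fact that triangle equivalences commute with cluster-tilting mutation, whereas the paper compresses this to ``the desired result then follows by induction.''
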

\begin{proof}
Let $i$ be a vertex of $(Q,W)$ which does not lie on a loop or a 2-cycle. 
By the paragraph before \cite[Proposition 4.5]{KellerYang11}, there is a triangle equivalence
\[
\cc_{\mu_i(Q,W)}\longrightarrow \cc_{(Q,W)},
\]
which takes $T_{\mu_i(Q,W)}$ to $\mu_iT_{(Q,W)}$. The desired result then follows by induction.
\end{proof}

A quiver with potential is said to be \emph{self-injective} if its Jacobian algebra is a finite-dimensional self-injective $k$-algebra. Examples of self-injective quivers with potential are constructed in \cite{HerschendIyama11}. The following two results are immediate consequences of Theorem~\ref{thm:the-one-object-case-cto} and Corollaries~\ref{cor:self-injective-cy-tilted-algebra} and \ref{cor:self-injective-representation-finite-cy-tilted-algebra}.

\begin{corollary}
Let $(Q,W)$ be a self-injective quiver with potential. Then the category $\ch^{[-1,0]}(\proj \widehat{J}(Q,W))$ admits a triangle structure.
\end{corollary}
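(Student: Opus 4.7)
The plan is to apply Theorem~\ref{thm:the-one-object-case-cto}(a) to the cluster category of $(Q,W)$. First I would set $A := \widehat{J}(Q,W)$ and note that the hypothesis ``self-injective quiver with potential'' is precisely the statement that $A$ is a finite-dimensional self-injective $k$-algebra. Finite-dimensionality of $A$ is what makes the cluster category $\ct := \cc_{(Q,W)}$ available: by the Amiot and Keller--Yang construction recalled at the start of this section, $\ct$ is a Hom-finite Krull--Schmidt algebraic $2$-Calabi--Yau triangulated $k$-category, equipped with a basic cluster-tilting object $M := T_{(Q,W)}$ whose endomorphism algebra is canonically identified with $A$.

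Next I would invoke the functor $\mathbb{P}\colon \ct \to \ch^{[-1,0]}(\proj A)$ constructed in Section~\ref{s:from-finitely-presented-objects-to-two-term-complexes}. By Theorem~\ref{thm:the-one-object-case-cto}(a), self-injectivity of $A$ is exactly the condition forcing $\mathbb{P}$ to be a $k$-linear equivalence. Since $\ct$ carries a triangulated structure, transporting that structure along the equivalence $\mathbb{P}$ endows $\ch^{[-1,0]}(\proj \widehat{J}(Q,W))$ with a triangulated structure, as required.

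There is essentially no genuine obstacle here: the corollary is a direct specialisation of the remark made immediately after Theorem~\ref{thm:the-one-object-case-cto} to the quiver-with-potential setting. The only verification worth spelling out is that ``self-injective quiver with potential'' matches the self-injectivity hypothesis on $\End_\ct(M)$ appearing in Theorem~\ref{thm:the-one-object-case-cto}(a), which is immediate from the identification $\End_\ct(M)\cong \widehat{J}(Q,W)$.
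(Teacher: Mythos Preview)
Your proposal is correct and follows essentially the same route as the paper: the corollary is stated as an immediate consequence of Theorem~\ref{thm:the-one-object-case-cto}(a) (via Corollary~\ref{cor:self-injective-cy-tilted-algebra}), applied to the cluster category $\cc_{(Q,W)}$ with its canonical cluster-tilting object, exactly as you outline.
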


\begin{corollary}
Let $\ct$ be a Hom-finite Krull--Schmidt 2-Calabi--Yau algebraic triangulated category with a cluster-tilting object whose endomorphism algebra is isomorphic to $\widehat{J}(Q,W)$ for some self-injective quiver with potential $(Q,W)$.
Then $\ct$ is additively equivalent to $\cc_{(Q,W)}$. If in addition $\ct$ is connected,  standard and has only finitely many isomorphism classes of indecomposable objects, then $\ct$ is triangle equivalent to $\cc_{(Q,W)}$.
\end{corollary}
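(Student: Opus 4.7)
The plan is to reduce both assertions to results already proved in the excerpt, by exhibiting a common ``normal form'' for $\ct$ and $\cc_{(Q,W)}$.

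Set $A = \widehat{J}(Q,W)$. By hypothesis $\ct$ is a Hom-finite Krull--Schmidt algebraic $2$-Calabi--Yau triangulated $k$-category possessing a cluster-tilting object, say $M$, with $\End_\ct(M)\cong A$. On the other hand, as recalled in the paragraph introducing cluster categories of quivers with potential, $\cc_{(Q,W)}$ is also a Hom-finite Krull--Schmidt algebraic $2$-Calabi--Yau triangulated $k$-category, and it carries the canonical cluster-tilting object $T_{(Q,W)}$ whose endomorphism algebra is precisely $A$. Since $(Q,W)$ is self-injective, $A$ is a finite-dimensional self-injective $k$-algebra, so the self-injectivity hypothesis of Corollary~\ref{cor:self-injective-cy-tilted-algebra} is met by both categories.

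For the first assertion, I would apply Corollary~\ref{cor:self-injective-cy-tilted-algebra} twice: once to $(\ct,M)$ and once to $(\cc_{(Q,W)}, T_{(Q,W)})$. This yields additive equivalences
\[
\ct \;\simeq\; \ch^{[-1,0]}(\proj A) \;\simeq\; \cc_{(Q,W)},
\]
proving that $\ct$ and $\cc_{(Q,W)}$ are additively equivalent.

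For the second assertion, I would invoke Corollary~\ref{cor:self-injective-representation-finite-cy-tilted-algebra} with the same choice of $A$. Its hypotheses on the ``first'' category---connectedness, standardness, and representation-finiteness---are exactly the additional assumptions placed on $\ct$, and the hypotheses on the ``second'' category---being Hom-finite Krull--Schmidt algebraic $2$-Calabi--Yau and admitting a cluster-tilting object with endomorphism algebra isomorphic to $A$---are satisfied by $\cc_{(Q,W)}$ together with $T_{(Q,W)}$. The conclusion of that corollary is exactly that $\ct$ and $\cc_{(Q,W)}$ are triangle equivalent.

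There is essentially no obstacle here: the substantive work (the additive equivalence with $\ch^{[-1,0]}(\proj A)$ via $\mathbb{P}$ and the lifting from additive to triangle equivalence via Keller's result) has already been done in Corollaries~\ref{cor:self-injective-cy-tilted-algebra} and~\ref{cor:self-injective-representation-finite-cy-tilted-algebra}. The only thing to verify is the bookkeeping that $(Q,W)$ being self-injective means precisely that $A=\widehat{J}(Q,W)$ is a finite-dimensional self-injective algebra, which is immediate from the definition.
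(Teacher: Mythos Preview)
Your proposal is correct and is exactly the argument the paper has in mind: the paper states this corollary as an ``immediate consequence'' of Theorem~\ref{thm:the-one-object-case-cto} and Corollaries~\ref{cor:self-injective-cy-tilted-algebra} and~\ref{cor:self-injective-representation-finite-cy-tilted-algebra}, without writing out a proof. You have simply made explicit the two applications of Corollary~\ref{cor:self-injective-cy-tilted-algebra} (to $\ct$ and to $\cc_{(Q,W)}$) for the additive statement and the direct invocation of Corollary~\ref{cor:self-injective-representation-finite-cy-tilted-algebra} for the triangulated statement.
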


This corollary shows that in this case Amiot's conjecture hods (see the second question in \cite[Summary of results, Part 2, Perspectives]{Amiot08} and \cite[Conjecture 1.1]{KalckYang20}). See \cite{LiuKeller23} for a recent progress on Amiot's conjecture.

\subsection{Mizuno's result}

Let $(Q,W)$ be a self-injective quiver with potential and put $J=\widehat{J}(Q,W)$.

\begin{theorem}\label{thm:compatibility-with-mutations}
Let  $\underline{i}=(i_1,\ldots,i_s)$ be a sequence of vertices of $Q$ such that $i_{r}$ does not lie on a loop or 2-cycle in the quiver of $\mu_{i_r-1}\cdots\mu_{i_1}(Q,W)$ for all $1\leq r\leq s-1$ and let $\underline{\epsilon}=(\epsilon_1,\ldots,\epsilon_s)$ be a sequence of elements of $\{L,R\}$ such that one of the following conditions is satisfied
\begin{itemize}
\item[(1)] $J_{\underline{i},\underline{\epsilon}}=\mu_{i_r}^{\epsilon_r}\cdots\mu_{i_1}^{\epsilon_1}(J)$ belongs to $\ch^{[-1,0]}(\proj J)$ for any $r\in\{1,\ldots,s\}$, 
\item[(2)] $J_{\underline{i},\underline{\epsilon}}=\mu_{i_r}^{\epsilon_r}\cdots\mu_{i_1}^{\epsilon_1}(\Sigma J)$ belongs to $\ch^{[-1,0]}(\proj J)$ for any $r\in\{1,\ldots,s\}$.
\end{itemize} Then 
\[
\End_{\ch^b(\proj J)}(J_{\underline{i},\underline{\epsilon}})\cong \widehat{J}(\mu_{i_s}\cdots\mu_{i_1}(Q,W)).
\]
\end{theorem}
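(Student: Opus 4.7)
The plan is to transport the question from the homotopy category $\ch^b(\proj J)$ into the cluster category $\ct=\cc_{(Q,W)}$ via the equivalence $\mathbb{P}$, and then invoke Lemma~\ref{lem:compatibility-of-mutation-of-QP-and-cto}. Set $M=T_{(Q,W)}$, so that $A=\End_\ct(M)\cong J$. Since $(Q,W)$ is self-injective, Theorem~\ref{thm:the-one-object-case-cto}(a) provides an equivalence $\mathbb{P}\colon \ct \longrightarrow \ch^{[-1,0]}(\proj J)$. Pick a starting object $T_0\in\ct$ as follows: in case (1) put $T_0=T_{(Q,W)}$, so that $\mathbb{P}(T_0)\cong J$; in case (2) put $T_0=\Sigma T_{(Q,W)}$, so that $\mathbb{P}(T_0)\cong \Sigma J$ (using the canonical presentation $T_{(Q,W)}\to 0$ of $\Sigma T_{(Q,W)}$ as in Remark~\ref{rem:independence-of-the-choice-of-presentation}). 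In either case $T_0$ is a basic cluster-tilting object of $\ct$.

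Next, define inductively $T_r=\mu_{i_r}(T_{r-1})$ for $r=1,\ldots,s$, where $\mu_{i_r}$ denotes cluster-tilting mutation in the $2$-Calabi--Yau category $\ct$ (so left and right mutation agree and the subscript/superscript $\epsilon_r$ is irrelevant on the $\ct$-side). I claim, by induction on $r$, that $\mathbb{P}(T_r)\cong J_{(i_1,\ldots,i_r),(\epsilon_1,\ldots,\epsilon_r)}$. The base case $r=0$ holds by construction. For the induction step, the hypothesis of (1) or (2) guarantees that the Aihara--Iyama silting mutation $\mu_{i_r}^{\epsilon_r}(\mathbb{P}(T_{r-1}))\cong \mu_{i_r}^{\epsilon_r}(J_{r-1})$ lies in $\ch^{[-1,0]}(\proj J)$, so by the last paragraph of Section~2.6 it coincides with the extriangulated mutation appearing in Lemma~\ref{lem:mutation-of-relative-cluster-tilting}. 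Theorem~\ref{thm:the-one-object-case-cto}(c) then asserts that the bijection between basic cluster-tilting objects of $\ct$ and basic $2$-term silting objects of $\ch^b(\proj J)$ commutes with mutations, so $\mathbb{P}(T_r)=\mathbb{P}(\mu_{i_r}(T_{r-1}))\cong \mu_{i_r}^{\epsilon_r}(\mathbb{P}(T_{r-1}))\cong J_{(i_1,\ldots,i_r),(\epsilon_1,\ldots,\epsilon_r)}$, closing the induction.

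Finally, applying $\mathbb{P}$ at $r=s$ yields $\End_{\ch^b(\proj J)}(J_{\underline{i},\underline{\epsilon}})\cong \End_\ct(T_s)$. In case (1), $T_s=\mu_{i_s}\cdots\mu_{i_1}(T_{(Q,W)})$, and Lemma~\ref{lem:compatibility-of-mutation-of-QP-and-cto} identifies $\End_\ct(T_s)$ with $\widehat{J}(\mu_{i_s}\cdots\mu_{i_1}(Q,W))$. In case (2), the autoequivalence $\Sigma$ of $\ct$ commutes with cluster-tilting mutation (by naturality), so $T_s\cong \Sigma\bigl(\mu_{i_s}\cdots\mu_{i_1}(T_{(Q,W)})\bigr)$, and $\End_\ct$ is $\Sigma$-invariant, reducing to the previous case.

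The main obstacle in executing this plan is the induction step: one must check carefully that the Aihara--Iyama flavour $\mu_{i_r}^{\epsilon_r}$ chosen by the hypothesis is precisely the one which matches the $2$-Calabi--Yau cluster-tilting mutation $\mu_{i_r}(T_{r-1})$ under the bijection of Theorem~\ref{thm:the-one-object-case-cto}(c). This uses crucially that among the two Aihara--Iyama mutations $\mu_i^L$ and $\mu_i^R$ of a $2$-term silting object $N$, at most one can stay inside $\ch^{[-1,0]}(\proj J)$ (compare Lemma~\ref{lem:mutation-of-relative-cluster-tilting}), so the hypothesis on $J_{\underline{i},\underline{\epsilon}}$ being $2$-term singles out the correct one, and this is the one that Theorem~\ref{thm:the-one-object-case-cto}(c) pairs with the unique cluster-tilting mutation on the $\ct$-side.
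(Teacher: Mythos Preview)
Your proof is correct and follows essentially the same route as the paper: transport the problem into $\cc_{(Q,W)}$ via the equivalence $\mathbb{P}$ from Theorem~\ref{thm:the-one-object-case-cto}, use that $\mathbb{P}$ commutes with mutations, and conclude via Lemma~\ref{lem:compatibility-of-mutation-of-QP-and-cto}. You have spelled out more carefully than the paper the induction step (in particular why the sign $\epsilon_r$ forced by the $2$-term hypothesis matches the unique extriangulated mutation, cf.\ the paper's remark after the theorem) and the reduction of case~(2) to case~(1) via $\Sigma$, both of which the paper leaves implicit.
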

\begin{proof}
Since $J$ is self-injective, it follows from Theorem~\ref{thm:the-one-object-case-cto} that the functor $\mathbb{P}\colon\cc_{(Q,W)}\to \ch^{[-1,0]}(\proj J)$ is an equivalence which takes $T_{(Q,W)}$ to $J$, $\Sigma T_{(Q,W)}$ to $\Sigma J$, and which takes a basic cluster-tilting object to a basic 2-term silting object and commutes with mutations. Therefore in both cases we have
\[
\End_{\ch^b(\proj J)}(J_{\underline{i},\underline{\epsilon}})\cong \End_{\cc_{(Q,W)})}(\mu_{i_s}\cdots\mu_{i_1}(T_{(Q,W)}),
\]
which is isomorphic to $\widehat{J}(\mu_{i_s}\cdots\mu_{i_1}(Q,W))$ by Lemma~\ref{lem:compatibility-of-mutation-of-QP-and-cto}.
\end{proof}

We remark that if the condition (1) or (2) is assumed, then the sequence $\underline{\epsilon}$ of signs is uniquely determined. The whole study in this paper was motivated by the following theorem due to Mizuno. For a set $I$ of vertices of $Q$ satisfying the two conditions below, we define $\mu_I=\prod_{i\in I}\mu_i$ for all mutations appearing below.

\begin{theorem}[{\cite[Theorem 1.1(a)]{Mizuno15}}]
Let  $I$ be a set of vertices of $Q$ satisfying
\begin{itemize}
\item any $i\in I$ does not lie on a loop or 2-cycle,
\item there are no arrows between vertices in $I$.
\end{itemize} 
Then 
\[
\End_{\ch^b(\proj J)}(\mu_I^L(J))\cong \widehat{J}(\mu_{I}(Q,W))\cong \End_{\ch^b(\proj J)}(\mu_I^R(\Sigma J)).
\]
\end{theorem}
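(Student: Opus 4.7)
The plan is to derive Mizuno's theorem from Theorem~\ref{thm:compatibility-with-mutations} by verifying its hypothesis for an arbitrary linear ordering of the independent set $I$. Fix an enumeration $I=\{i_1,\ldots,i_s\}$, set $\underline{i}=(i_1,\ldots,i_s)$, and consider two sign sequences: $\underline{\epsilon}=(L,\ldots,L)$ to handle the first isomorphism $\End_{\ch^b(\proj J)}(\mu_I^L(J))\cong\widehat{J}(\mu_I(Q,W))$, and $\underline{\epsilon}=(R,\ldots,R)$ for the second $\widehat{J}(\mu_I(Q,W))\cong\End_{\ch^b(\proj J)}(\mu_I^R(\Sigma J))$. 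Once we check (a) that $i_r$ remains loop- and $2$-cycle-free in $\mu_{i_{r-1}}\cdots\mu_{i_1}(Q,W)$ for each $r$, and (b) that the iterated silting mutations $J_{\underline{i},\underline{\epsilon}}^{(r)}$ stay in $\ch^{[-1,0]}(\proj J)$, Theorem~\ref{thm:compatibility-with-mutations} yields $\End_{\ch^b(\proj J)}(J_{\underline{i},\underline{\epsilon}})\cong\widehat{J}(\mu_{i_s}\cdots\mu_{i_1}(Q,W))$; and because the vertices of $I$ are pairwise disconnected, the mutations commute (both at the level of QPs and of silting objects), so this right-hand side equals $\widehat{J}(\mu_I(Q,W))$, while $J_{\underline{i},(L,\ldots,L)}=\mu_I^L(J)$ and $J_{\underline{i},(R,\ldots,R)}=\mu_I^R(\Sigma J)$.

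Claim (a) follows by induction from the Derksen--Weyman--Zelevinsky mutation rule: the only arrows changed by $\mu_{i_1}$ are arrows incident to $i_1$ and arrows arising from $2$-paths $j\to i_1\to k$. Since no arrow of $(Q,W)$ connects two vertices of $I$, no new arrow between any two vertices of $I\setminus\{i_1\}$ is produced, and no loop or $2$-cycle is created at any $i_r$ for $r\geq 2$. Iterating, at each stage no vertex of $I$ acquires a loop or $2$-cycle, and moreover $I\setminus\{i_1,\ldots,i_{r-1}\}$ remains an independent set in $\mu_{i_{r-1}}\cdots\mu_{i_1}(Q,W)$.

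For (b), I would proceed inductively, assuming $J_{\underline{i},\underline{\epsilon}}^{(r-1)}$ is $2$-term. By the $(r-1)$-case of Theorem~\ref{thm:compatibility-with-mutations}, its endomorphism algebra is $\widehat{J}(\mu_{i_{r-1}}\cdots\mu_{i_1}(Q,W))$, whose Gabriel quiver, by (a), has $i_r$ disconnected from the remaining vertices of $I$. Because the earlier mutations at $i_1,\ldots,i_{r-1}$ operated only on summands whose approximation triangles involve projectives $P_k$ with $k\notin I$ (again by (a)), the $i_r$-th indecomposable summand of $J_{\underline{i},\underline{\epsilon}}^{(r-1)}$ is still the untouched $P_{i_r}$, namely a stalk in degree $0$ in the $L$-case or $\Sigma P_{i_r}$ in the $R$-case. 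The silting mutation triangle at this summand then replaces it by the $2$-term complex $(P_{i_r}\to\bigoplus_\alpha P_{t(\alpha)})$ (resp.\ $(\bigoplus_\alpha P_{s(\alpha)}\to P_{i_r})$), where $\alpha$ runs over the arrows of the intermediate quiver starting at (resp.\ ending at) $i_r$. By (a), none of the targets $t(\alpha)$ (resp.\ sources $s(\alpha)$) lies in $I$, so the corresponding projectives are unmutated stalks in degree $0$, and the whole silting object stays in $\ch^{[-1,0]}(\proj J)$.

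The main obstacle is the bookkeeping in step (b): one must ensure simultaneously that, at stage $r$, the summand to be mutated is still an untouched $P_{i_r}$ in a fixed degree \emph{and} that its approximation is built from summands that are themselves in that same degree. Both rest on the combinatorial independence of $I$ being preserved by the intermediate QP-mutations, which is precisely what (a) gives and what makes the argument loop back cleanly between the categorical and combinatorial sides.
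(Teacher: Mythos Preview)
Your proposal is correct and follows exactly the paper's approach: apply Theorem~\ref{thm:compatibility-with-mutations} to the sequences $((i_1,\ldots,i_s),(L,\ldots,L))$ and $((i_1,\ldots,i_s),(R,\ldots,R))$. The paper's proof is a one-liner that leaves the verification of the two hypotheses of Theorem~\ref{thm:compatibility-with-mutations} (your claims (a) and (b)) to the reader, whereas you spell them out; your arguments for both are sound, though the inductive appeal to Theorem~\ref{thm:compatibility-with-mutations} in step (b) is a bit heavier than needed---since mutation at $i_1,\ldots,i_{r-1}$ leaves the arrows incident to $i_r$ in $Q$ untouched, one can read off directly that the approximation of $P_{i_r}$ (resp.\ $\Sigma P_{i_r}$) at stage $r$ coincides with the one computed in $\add(J/P_{i_r})$.
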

\begin{proof}
Suppose $I=\{i_1,\ldots,i_s\}$. Applying Theorem~\ref{thm:compatibility-with-mutations} to $((i_1,\ldots,i_s),(L,\ldots,L))$ and $((i_1,\ldots,i_s),(R,\ldots,R))$, we obtain the desired result.
\end{proof}

\appendix
\section{Triangle structures on 2-term complexes\\ by Osamu Iyama}
\label{a:triangle-structures-on-2-term-complexes}

Let $A$ be a ring, and $\proj A$ the category of finitely generated projective $A$-modules.
We denote by $\ch(\proj A)$ the homotopy category of $\proj A$, and by $\ch^{[-1,0]}(\proj A)$ the full subcategory consisting of two-term complexes.
The aim of this appendix is to prove the converse of Corollary \ref{cor:self-injective-cy-tilted-algebra}.
More precisely, we prove the following result.

\begin{theorem}\label{3 and 4}
Let $\ct$ be a 2-Calabi--Yau triangulated category over a perfect field $k$, $M$ a cluster-tilting object and $A=\End_{\ct}(M)$.
Then the following conditions are equivalent.
\begin{enumerate}
\item[\rm(i)] $\ch^{[-1,0]}(\proj A)$ has a structure of a triangulated category.
\item[\rm(ii)] $A$ is self-injective.
\item[\rm(iii)] $\add M=\add M[2]$.
\item[\rm(iv)] $\proj A$ has a structure of a 4-angulated category.
\end{enumerate}
\end{theorem}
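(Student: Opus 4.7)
The plan is to close the cycle $\mathrm{(i)}\Leftrightarrow\mathrm{(ii)}\Leftrightarrow\mathrm{(iii)}\Leftrightarrow\mathrm{(iv)}$, with the substantive content concentrated in $\mathrm{(i)}\Rightarrow\mathrm{(ii)}$ and $\mathrm{(iv)}\Rightarrow\mathrm{(ii)}$. The routine links are: for $\mathrm{(ii)}\Leftrightarrow\mathrm{(iii)}$, since $M$ is cluster-tilting, $\Sigma^2 M\in\add M$ iff $\Hom_\ct(M,\Sigma^3 M)=0$, and the $2$-Calabi--Yau duality identifies this with $D\Hom_\ct(M,\Sigma^{-1}M)$, which vanishes iff $A$ is self-injective by \cite[Proposition 3.6]{IyamaOppermann13}; the implication $\mathrm{(ii)}\Rightarrow\mathrm{(i)}$ is Corollary~\ref{cor:self-injective-cy-tilted-algebra}; and $\mathrm{(iii)}\Rightarrow\mathrm{(iv)}$ follows from the Geiss--Keller--Oppermann construction, which equips the $\Sigma^2$-closed cluster-tilting subcategory $\add M\subseteq \ct$ with a $4$-angulated structure of shift $\Sigma^2|_{\add M}$, transported to $\proj A$ via the Yoneda equivalence.

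The key elementary ingredient, valid in $\ca:=\ch^{[-1,0]}(\proj A)$ without any assumption of a triangulated structure, is that for $Y=(Y^{-1}\xrightarrow{d_Y}Y^0)\in\ca$ a direct inspection of chain maps modulo homotopies yields
\[
\Hom_\ca(A,Y)\cong H^0(Y),\qquad \Hom_\ca(A[1],Y)\cong H^{-1}(Y),
\]
as left $A$-modules; in particular $\End_\ca(A[1])=A$. For $\mathrm{(i)}\Rightarrow\mathrm{(ii)}$: suppose $\ca$ carries a triangulated structure with shift $\Sigma'$. By the theorem of Reiten--van den Bergh, a Serre functor $\mathbb{S}$ exists on $\ca$ (conditional on the existence of AR triangles, discussed below). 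Writing $\mathbb{S}(A[1])=(W^{-1}\xrightarrow{d'}W^0)$ with $W^{-1},W^0\in\proj A$, Serre duality yields
\[
\ker d'\cong H^{-1}(\mathbb{S}(A[1]))\cong\Hom_\ca(A[1],\mathbb{S}(A[1]))\cong D\End_\ca(A[1])=DA,
\]
so the injective $A$-module $DA$ embeds as a submodule of the finitely generated projective $W^{-1}$. The injectivity of $DA$ splits the embedding, so $DA$ is a direct summand of $W^{-1}$, hence itself projective; equivalently, $A$ is self-injective.

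For $\mathrm{(iv)}\Rightarrow\mathrm{(ii)}$ the same strategy applies to the Serre functor $\mathbb{S}$ on the Hom-finite Krull--Schmidt $4$-angulated category $\proj A$, using the Reiten--van den Bergh correspondence extended to the $(2n)$-angulated setting: the Yoneda-type identification $\Hom_A(X,\mathbb{S}(A))\cong DX$ specialised to $X=A$ gives $\mathbb{S}(A)\cong DA$ as an object of $\proj A$, which again forces $DA$ to be projective. The principal obstacle in both implications is the existence of the Serre functor, for which the Reiten--van den Bergh theorem requires AR triangles (resp.\ AR $4$-angles) to exist. The plan for this step is either to establish AR triangles directly in $\ca$ by exploiting its natural $0$-Auslander extriangulated structure introduced in Section~\ref{ss:the-category}, or to bypass the full Serre functor by proving representability of only the specific cohomological functors $D\Hom_\ca(A[1],-)$ and $D\Hom_{\proj A}(A,-)$ actually used in the arguments above.
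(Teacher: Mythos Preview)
Your treatment of (ii)$\Leftrightarrow$(iii), (ii)$\Rightarrow$(i), and (iii)$\Rightarrow$(iv) is correct and matches the paper's citations to \cite{IyamaOppermann13}, Corollary~\ref{cor:self-injective-cy-tilted-algebra}, and \cite{GeissKellerOppermann13}.

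The genuine gap is in (i)$\Rightarrow$(ii) and (iv)$\Rightarrow$(ii): your argument is correct \emph{conditional} on a Serre functor, but neither workaround supplies one. Workaround~2 is circular. On $\proj A$ the functor $D\Hom_{\proj A}(A,-)\cong D(-)$ is representable in $\proj A$ if and only if $DA\in\proj A$, which is exactly the statement that $A$ is self-injective. On $\ca=\ch^{[-1,0]}(\proj A)$ the situation is the same: a direct computation shows that when $A$ is self-injective the object $DA[1]\in\ca$ represents $D\Hom_\ca(A[1],-)\cong D H^{-1}(-)$, and conversely your own argument shows that any representing object forces $DA$ to be projective. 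So representability of this one functor is \emph{equivalent} to self-injectivity; nothing you have written explains how the hypothetical triangulated structure forces it. Workaround~1 conflates two unrelated structures: the $0$-Auslander extriangulated structure on $\ca$ has nothing a priori to do with the unknown triangulated structure postulated in (i), so AR conflations for the former do not automatically yield AR triangles for the latter.

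The paper takes a different route for (i)$\Rightarrow$(ii). The crucial observation is that \emph{sink morphism} and \emph{right minimal weak kernel} are purely additive notions, hence independent of whatever triangulated structure $\ca$ carries. For each indecomposable $P_0\in\add A\subset\ca$ with non-projective top $S$, the paper reads off the sink morphism of $P_0$ and its weak kernel from the minimal projective resolution $P_3\to P_2\to P_1\to P_0$ of $S$ (Lemmas~\ref{compare 3 and 4} and~\ref{compare 3 and 4 1}); Lemma~\ref{tau sequence} then guarantees an AR triangle $P_3[1]\to C\to P_0\to\Sigma(P_3[1])$ in \emph{any} triangulated structure on $\ca$. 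Transporting this AR triangle across the additive duality $\ca\simeq\ch^{[-1,0]}(\proj A\op)$ and running the same lemmas backwards forces the dual sequence $P_0^*\to P_1^*\to P_2^*\to P_3^*$ to be exact, giving $\Ext^1_A(S,A)=0$ for every simple $S$, hence self-injectivity. For (iv)$\Rightarrow$(ii) the paper simply invokes \cite[Proposition~2.2.9]{JassoKellerMuro} rather than a Serre functor.
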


The novel part of Theorem \ref{3 and 4} is the implication (i)$\Rightarrow$(ii)(iv), which is a consequence of the following main result of this section.

\begin{theorem}\label{necessary}
Let $A$ be a finite dimensional algebra over a field $k$ such that $\ch^{[-1,0]}(\proj A)$ has a structure of a triangulated category.
Then the following assertions hold.
\begin{enumerate}
\item[\rm(a)] $A$ is self-injective.
\item[\rm(b)] Let $S$ be a simple $A$-module which is non-projective. Then $\Omega^4(S)$ is again a simple $A$-module.
\end{enumerate}
If $A/{\rm rad} A$ is a separable $k$-algebra, then the following assertions hold.
\begin{enumerate}
\item[\rm(c)] $A$ is \emph{twisted 4-periodic}, that is, there exists an automorphism $\alpha$ of $A$ such that
$\Omega^4_{A^{\rm e}}(A)\simeq {}_1A_\alpha$ as $A^e$-modules.
\item[\rm(d)] $\proj A$ has a structure of a 4-angulated category.
\end{enumerate}
\end{theorem}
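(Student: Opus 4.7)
The plan centers on comparing the hypothetical triangulated shift $\Sigma'$ of $\ch^{[-1,0]}(\proj A)$ with the natural $0$-Auslander extriangulated structure on this category, in which $\proj A$ in degree $0$ forms the Ext-projectives and $\Sigma\proj A$ in degree $-1$ forms the Ext-injectives, where $\Sigma$ denotes the ambient shift of $\ch^b(\proj A)$. The central step toward (a) is to show that $\Sigma'(P)\cong \Sigma(\nu P)$ for every indecomposable $P\in\proj A$, where $\nu$ is some auto-equivalence of $\proj A$. I would prove this by stratifying the indecomposable objects of $\ch^{[-1,0]}(\proj A)$ into three classes---stalks in degree $0$, stalks in degree $-1$, and minimal 2-term complexes with radical differential---and characterizing each class intrinsically through $\Hom$-pairings; for instance, stalks in degree $-1$ are exactly the objects $X$ satisfying $\Hom(X,P)=0$ for every $P\in\proj A$ placed in degree $0$. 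The auto-equivalence $\Sigma'$ must permute these classes, and long exact sequences obtained by applying cohomological functors $\Hom(-,Y)$ to well-chosen triangles would force $\Sigma'$ to interchange the two classes of stalks and preserve the class of minimal complexes. By Morita theory the resulting $\nu$ corresponds to an invertible $A$-bimodule, namely the Nakayama bimodule, which exists precisely when $A$ is self-injective.

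Granting (a), I would prove (b) by fixing a non-projective simple $S$ with minimal projective presentation $P^{-1}\to P^0\to S\to 0$ and the corresponding indecomposable $X\in\ch^{[-1,0]}(\proj A)$ with $H^0(X)=S$. Iterating the identification $\Sigma'(Q)\cong\Sigma(\nu Q)$ on projectives, together with the analogous description on degree-$(-1)$ stalks, would let one compute $H^0((\Sigma')^4 X)\cong\Omega^4(S)$, up to a twist by a power of $\nu$ which does not affect the isomorphism type as an $A$-module. Since $\Sigma'$ preserves indecomposability and sends minimal 2-term complexes of the relevant form back into the same class, the complex $(\Sigma')^4 X$ is again an indecomposable 2-term complex with top a single simple module, forcing $\Omega^4(S)$ to be simple.

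The separable case yields (c) and (d). For (c), I would promote the auto-equivalence $\nu$ of $\proj A$ produced in Step 1 to a genuine $A$-bimodule automorphism: interpreting the fourth iterate $(\Sigma')^4$ as data on a truncated projective $A^{\rm e}$-resolution of $A$ yields a class whose obstruction to being representable by an honest bimodule automorphism lies in a Hochschild cohomology group, and separability of $A/\rad A$ is exactly what makes this obstruction vanish. This produces an automorphism $\alpha$ of $A$ with $\Omega^4_{A^{\rm e}}(A)\cong {}_1A_\alpha$. Part (d) would then follow by invoking the Geiss--Keller--Oppermann construction of $(d+2)$-angulated categories from $d\Z$-cluster-tilting subcategories, applied to $\proj A$ viewed as a $2\Z$-cluster-tilting subcategory of the derived category of the twisted $4$-periodic algebra from (c). The main obstacle throughout is Step 1 of (a): the abstract triangulated shift $\Sigma'$ is a priori unrelated to the ambient homotopy-categorical data, and forcing it to match the concrete stratification of $\proj A\subset\ch^{[-1,0]}(\proj A)$ requires a careful synthesis of Krull--Schmidt counting, intrinsic $\Hom$-characterizations, and the triangulated axioms.
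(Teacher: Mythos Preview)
Your proposal has a genuine gap at the heart of part (a). You want to show that the hypothetical shift $\Sigma'$ interchanges the class of degree-$0$ stalks with the class of degree-$(-1)$ stalks, and you propose to do this via $\Hom$-characterizations such as ``$\Sigma P$ is characterized by $\Hom(\Sigma P,Q)=0$ for all degree-$0$ stalks $Q$.'' But this characterization is not intrinsic to the additive category: it refers to the class of degree-$0$ stalks, which is exactly what you are trying to pin down. Applying the autoequivalence $\Sigma'$ to such a statement only tells you that $\Sigma'(\text{degree-}(-1)\text{ stalks})$ is $\Hom$-orthogonal to $\Sigma'(\text{degree-}0\text{ stalks})$, which is circular. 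You acknowledge this obstacle at the end, but the ``careful synthesis'' you invoke does not resolve it; there is no evident invariant of the bare additive category that separates the three strata. The same issue undermines your approach to (b): even granting $\Sigma'(Q)\cong\Sigma(\nu Q)$ on stalks, you have no control over $\Sigma'$ on minimal two-term complexes, so you cannot compute $H^0((\Sigma')^4 X)$.

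The paper's approach sidesteps computing $\Sigma'$ altogether, via almost split triangles. The key observation (Lemma~\ref{tau sequence}) is that in any Krull--Schmidt triangulated category, an almost split triangle $Z\to Y\to X\to\Sigma' Z$ has its first two morphisms determined by purely additive data: $Y\to X$ is the sink morphism of $X$, and $Z\to Y$ is its right-minimal weak kernel. For $X=P_0$ an indecomposable degree-$0$ stalk with non-projective top $S$, these are computed explicitly (Lemmas~\ref{compare 3 and 4} and~\ref{compare 3 and 4 1}) from the minimal projective resolution $P_3\to P_2\to P_1\to P_0\to S\to 0$: one finds $Z=P_3[1]$ and $Y=\cone(P_2\to P_1)$. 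The duality $\Hom_A(-,A[1])$ then transports the triangulated structure to $\ch^{[-1,0]}(\proj A\op)$ and sends almost split triangles to almost split triangles; running the same lemmas in the reverse direction on the $A\op$ side forces the dualized sequence $P_0^*\to P_1^*\to P_2^*\to P_3^*$ to be exact with simple cokernel. This yields $\Ext^i_A(S,A)=0$ for $i=1,2$ and the simplicity of ${\rm Tr}\,\Omega^2 S$, from which self-injectivity and (b) follow (using $\Omega^4\simeq({\rm Tr}\,\Omega^2-)^*$ for self-injective $A$). Parts (c) and (d) are then obtained by citing external results that upgrade ``$\Omega^4$ permutes simples'' to twisted $4$-periodicity under separability; your Hochschild-obstruction sketch for (c) is in the right spirit there.
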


%It is well-known that, if $A/J_A$ is a separable $k$-algebra (e.g.\ $k$ is algebraically closed), then the condition (b) is equivalent to the following conditions  (see e.g.\ \cite[Corollary 2.2]{H})
%\begin{enumerate}
%\item[\rm(c)] There exists an automorphism $\alpha$ of $A$ such that $\Omega^4\simeq(-)_\alpha$ as functors on $\underline{\mod}A$.
%\item[\rm(d)] There exists an automorphism $\alpha$ of $A$ such that
%$\Omega^4_{A^{\rm e}}(A)\simeq {}_1A_\alpha$ as $A^e$-modules.
%\end{enumerate}
%The following is quite interesting even though it is more reasonable to impose suitable assumptions.
It is natural to pose the following question.

\begin{question}
Let $\cm$ be a 4-angulated category.
Does $\ch^{[-1,0]}(\cm)$ have a structure of a triangulated category?
\end{question}

In the rest, we prove Theorems \ref{necessary} and \ref{3 and 4}.
Let us recall basic notions.
%In particular, we have a bijection
%\[\ind(\mod A)\sqcup\ind(\proj A)\simeq\ind\ch^{[-1,0]}(\proj A),\]
%where $X\in\ind(\mod A)$ corresponds to its minimal projective presentation, and $P\in\ind(\proj A)$ corresponds to $P[1]$.
%We often regard $\ind(\mod A)$ as a subset of $\ind\ch^{[-1,0]}(\proj A)$.

Let $\cc$ be a Krull--Schmidt category, and $f:Y\to X$ a morphism in $\cc$.
We call $f$ \emph{right minimal} if each morphism $g:Y\to Y$ satisfying $f=fg$ is an automorphism. We call $f$ \emph{right almost split} if $f$ is not a split epimorphism, and for each morphism $g:Z\to X$ which is not a split epimorphism, there exists $h:Z\to Y$ satisfying $g=fh$.
In this case, it is easy to show that $X$ is indecomposable.
We call $f$ a \emph{sink morphism of $X$} if it is right minimal and right almost split morphism. Dually, we define a \emph{left minimal} morphism, a \emph{left almost split} morphism and a \emph{source} morphism.

Let $\cc$ be a Krull--Schmidt triangulated category. A triangle $Z\xrightarrow{g}Y\xrightarrow{f}X\to\Sigma Z$ is called an \emph{almost split triangle} if $f$ is a sink morphism, or equivalently, $g$ is a source morphism, see \cite[Theorem 2.9]{IyamaNakaokaPalu24} for other equivalent conditions.
%Dually we define a \emph{left $\cd$-approximation}.
%We call $\cd$ \emph{functorially finite in $\cc$} if each object of $\cc$ admits a right $\cd$-approximation and a left $\cd$-approximation.

%We start with the following standard facts.
%\begin{proposition}
%Let $A$ be a finite dimensional algebra over a field $k$.
%\begin{enumerate}
%\item[\rm(a)] Any indecomposable object in $\ch^{[-1,0]}(\proj A)$ has a sink (respectively, source) morphism in $\ch^{[-1,0]}(\proj A)$.
%\item[\rm(b)] If $\ch^{[-1,0]}(\proj A)$ has a structure of a triangulated category, then it has almost split triangles in $\ch^{[-1,0]}(\proj A)$.
%\end{enumerate}
%\end{proposition}
%\begin{proof}
%(a) Each indecomposable object $X\in\ch^{[-1,0]}(\proj A)$ has an almost split triangle
%\[\nu(X)[-1]\to E\xrightarrow{f} X\to \nu(X)\ \ {\rm in }\ \cd^b(\mod A),\]
%where $\nu:=-\otimes_A(DA)$ is the Nakayama functor \cite[proof of Proposition 5.18]{IyamaNakaokaPalu24}. Since $\ch^{[-1,0]}(\proj A)$ is functorially finite in $\cd^b(\mod A)$ \cite[Example 5.19]{IyamaNakaokaPalu24}, there exists a right $\ch^{[-1,0]}(\proj A)$-approximation $g:F\to E$ of $E$. Then $fg:F\to X$ is right almost split in $\ch^{[-1,0]}(\proj A)$. Taking its right minimal part, we obtain a sink morphism of $X$ in $\ch^{[-1,0]}(\proj A)$. Dually, $X$ has a source morphism in $\ch^{[-1,0]}(\proj A)$.
%(b) Immediate from (a) and \cite[Lemma 3.2]{IyamaNakaokaPalu24}.
%\end{proof}

Later we use the following characterization of almost split triangles.

\begin{lemma}\label{tau sequence}
Let $\ct$ be a Krull-Schmidt triangulated category.
Assume that $X\in\ct$ is indecomposable and satisfies $({\rm rad}\ct)(-,X)\neq0$.
Then the following conditions are equivalent for a sequence $Z\xrightarrow{g} Y\xrightarrow{f} X$.
\begin{enumerate}
\item[\rm(i)] $f$ is a sink morphism of $X$, and $g$ is a right minimal weak kernel of $f$.
\item[\rm(ii)] There exists a morphism $e:X\to\Sigma Z$ such that $Z\xrightarrow{g} Y\xrightarrow{f} X\xrightarrow{e}\Sigma Z$ is an almost split triangle.
\end{enumerate}
\end{lemma}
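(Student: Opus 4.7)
The plan is to prove the two implications separately, using the definition of almost split triangle as a distinguished triangle whose third morphism is a sink morphism, together with the equivalent source-morphism characterisation stated just above.

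For $(ii)\Rightarrow(i)$: the map $g$ is automatically a weak kernel of $f$ by the triangle axioms, so the only content is right minimality. I would first argue that $Z$ is indecomposable by invoking the equivalent description that $g$ is a source morphism. If $Z=Z_1\oplus Z_2$ were a nontrivial decomposition, then neither projection $p_i\colon Z\to Z_i$ is a split monomorphism, so the left-almost-split property of $g$ produces $k_i\colon Y\to Z_i$ with $p_i=k_ig$; these assemble into a retraction of $g$, forcing $f=0$. But the hypothesis $(\rad\ct)(-,X)\neq 0$ combined with $f$ being right almost split rules this out (otherwise every non-split-epimorphism $W\to X$ would factor through the zero map, contradicting the nonvanishing of the radical at $X$). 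Hence $Z$ is indecomposable, $\End_\ct(Z)$ is local, and right minimality of $g$ follows: if $gh=g$ with $h$ non-invertible, then $1-h$ is invertible while $g(1-h)=0$ forces $g=0$, again contradicting the non-split-epimorphism property of $f$.

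For $(i)\Rightarrow(ii)$: I would complete $f$ to a distinguished triangle $Z'\xrightarrow{g'}Y\xrightarrow{f}X\xrightarrow{e'}\Sigma Z'$. Since $f$ is a sink morphism, this is by definition an almost split triangle, so by the direction just proved $g'$ is a right minimal weak kernel of $f$. The remaining step is to identify $g$ with $g'$: since each factors through the other as a weak kernel of $f$, there exist $\alpha\colon Z\to Z'$ and $\beta\colon Z'\to Z$ with $g=g'\alpha$ and $g'=g\beta$; then $g=g(\beta\alpha)$ and $g'=g'(\alpha\beta)$, so right minimality of $g$ and $g'$ forces both composites to be automorphisms, whence $\alpha$ is an isomorphism. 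Transporting along $\alpha^{-1}$ via the rotation and isomorphism axioms of triangulated categories yields the desired morphism $e=(\Sigma\alpha)^{-1}\circ e'$ completing $Z\xrightarrow{g}Y\xrightarrow{f}X\xrightarrow{e}\Sigma Z$ to a distinguished, hence almost split, triangle.

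The main obstacle is the indecomposability of $Z$ in $(ii)\Rightarrow(i)$; once this standard consequence of the Krull--Schmidt hypothesis and the source morphism characterisation is in hand, the remaining arguments are routine diagram chases. The assumption $(\rad\ct)(-,X)\neq 0$ is used precisely to exclude the degenerate situation $f=0$, which would otherwise be vacuously compatible with the definition of a sink morphism and would break both the indecomposability argument and the right-minimality conclusion.
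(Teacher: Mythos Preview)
Your approach matches the paper's: for $(ii)\Rightarrow(i)$ you deduce indecomposability of $Z$ from the source-morphism characterisation and then use locality of $\End_\ct(Z)$, and for $(i)\Rightarrow(ii)$ you complete $f$ to a triangle and compare the two right minimal weak kernels of $f$, exactly as the paper does. The only slip is in the last clause of your $(ii)\Rightarrow(i)$: from $g=0$ the triangle yields that $f$ is a split \emph{monomorphism}, and since $f$ lies in the radical (being a sink morphism with $X$ indecomposable) this forces $f=0$, after which the hypothesis $(\rad\ct)(-,X)\neq 0$ gives the contradiction---``the non-split-epimorphism property of $f$'' is not what fails here.
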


\begin{proof}
(ii)$\Rightarrow$(i) 
It suffices to show that $g$ is right minimal. Since $g:Z\to Y$ is a source morphism, $Z$ is indecomposable. If $g$ is not right minimal, then $g$ has to be zero, and hence $f$ has to be a split monomorphism. Since $f$ belongs to the radical of $\ct$, we have $f=0$ and hence $({\rm rad}\ct)(-,X)=0$, a contradiction.

(i)$\Rightarrow$(ii)
We extend $f$ to an almost split triangle $Z'\xrightarrow{g'} Y\xrightarrow{f} X\xrightarrow{e'}\Sigma Z'$. By the argument above, $g'$ is also right minimal. Since both $g$ and $g'$ are right minimal weak kernels of $f$, there exists an isomorphism $a:Z\to Z'$ such that $g=g'a$. Thus the condition (ii) holds for $e:=(\Sigma a)^{-1}e'$.
\end{proof}

Let $\cc$ be an additive category and $\cd$ an additive full subcategory of $\cc$.
We call a morphism $f:Y\to X$ a \emph{right $\cd$-approximation of $X$} if $Y\in\cd$ holds, and for each morphism $g:Z\to X$ with $Z\in\cd$, there exists $h:Z\to Y$ satisfying $g=fh$.

Let $A$ be a finite dimensional $k$-algebra. By \cite{Auslander71}, we have an equivalence \begin{equation}\label{H^0}
H^0:\ch^{[-1,0]}(\proj A)/[A[1]]\simeq \mod A,
\end{equation}
where $[A[1]]$ is the ideal of $\ch^{[-1,0]}(\proj A)$ consisting of morphisms factoring through objects in $\add A[1]$.

The following observations play a crucial role in the proof of Theorem \ref{3 and 4}.

\begin{lemma}\label{compare 3 and 4}
Let $A$ be a finite dimensional $k$-algebra. For a complex
\begin{equation}\label{4 term sequence}
P_3\xrightarrow{f_3} P_2\xrightarrow{f_2}P_1\xrightarrow{f_1} P_0\ \ {\rm in }\ \proj A,
\end{equation}
consider the complex
\begin{equation}\label{3 term sequence}
P_3[1]\xrightarrow{f'_3} C\xrightarrow{f'_1} P_0\ \ {\rm in }\ \ch^{[-1,0]}(\proj A),
\end{equation}
where $C={\rm Cone}(P_2\xrightarrow{f_2}P_1)\in\ct$ and $f'_1$ and $f'_3$ are given by
\begin{equation}\label{f'_3 and f'_1}
\xymatrix@R1.5em{
P_3\ar[r]^{f_3}\ar[d]&P_2\ar[r]\ar[d]^{f_2}&0\ar[d]\\
0\ar[r]&P_1\ar[r]^{f_1}&P_0.
}\end{equation}
Then the following assertions hold.
\begin{enumerate}
\item[\rm(a)] The following conditions are equivalent.
\begin{enumerate}
\item[\rm(i)] \eqref{4 term sequence} is exact.
\item[\rm(ii)] $f'_3:P_3\to C$ is a weak kernel of $f'_1:C\to P_0$ in $\ch^{[-1,0]}(\proj A)$.
\end{enumerate}
\item[(b)] The following conditions are equivalent.
\begin{enumerate}
\item[\rm(i)] $f_3:P_3\to P_2$ is right minimal in $\proj A$.
\item[\rm(ii)] $f'_3:P_3[1]\to C$ is right minimal in $\ch^{[-1,0]}(\proj A)$.
\end{enumerate}
%\item[\rm(c)] If the conditions in {\rm (a)} hold, then the following conditions are equivalent.
%\begin{enumerate}
%\item[\rm(i)] ${\rm Cok} f_1$ is a simple $A$-module.
%\item[\rm(ii)] $f'_1:C\to P_0$ is right minimal almost split in $\ch^{[-1,0]}(\proj A)$.
%\end{enumerate}
\end{enumerate}
\end{lemma}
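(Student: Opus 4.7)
The plan is to reduce both parts to explicit descriptions of chain maps and null-homotopies in $\ch^{[-1,0]}(\proj A)$, and then extract each claim from a small amount of degree bookkeeping.

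For a 2-term complex $Y = (Y^{-1} \xrightarrow{d_Y} Y^0)$, a morphism $h \colon Y \to C$ in $\ch^{[-1,0]}(\proj A)$ corresponds to a pair $(h^{-1}\colon Y^{-1} \to P_2,\, h^0\colon Y^0 \to P_1)$ with $f_2 h^{-1} = h^0 d_Y$, and the composite $f'_1 h$ is represented by the single map $f_1 h^0$ in degree $0$. A direct inspection of degrees shows that no nonzero null-homotopies exist from such a $Y$ into a complex concentrated in degree $0$, so $f'_1 h = 0$ in $\ch^{[-1,0]}(\proj A)$ if and only if $f_1 h^0 = 0$. Similarly, a morphism $k \colon Y \to P_3[1]$ is the data of a single map $k^{-1}\colon Y^{-1} \to P_3$, and $h$ is homotopic to $f'_3 k$ iff there exists $t\colon Y^0 \to P_2$ with $h^0 = f_2 t$ and $h^{-1} - t d_Y = f_3 k^{-1}$.

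With this dictionary in hand, part (a) is quick. Assuming (i), given $h$ with $f_1 h^0 = 0$, exactness at $P_1$ produces $t$ with $h^0 = f_2 t$; then $f_2(h^{-1} - t d_Y) = h^0 d_Y - h^0 d_Y = 0$, so exactness at $P_2$ yields $k^{-1}$ with $h^{-1} - t d_Y = f_3 k^{-1}$, which is (ii). For the converse, apply the weak kernel property to the test objects $Y = P$ (concentrated in degree $0$, with $d_Y = 0$) for an arbitrary $P \in \proj A$ to recover exactness at $P_1$, and to $Y = P[1]$ (concentrated in degree $-1$) to recover exactness at $P_2$. In each case the extra data $k^{-1}$ or $t$ forced by the degree configuration is automatically zero, so the weak kernel condition degenerates to the required factorisations through $f_2$ or $f_3$.

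Part (b) follows from the same bookkeeping: for $Y = P_3[1]$ one has $Y^0 = 0$, which forces all potential null-homotopies out of $P_3[1]$ to be zero. Consequently $\End_{\ch^{[-1,0]}(\proj A)}(P_3[1]) = \End_{\proj A}(P_3)$, and $f'_3 \Phi = f'_3$ in $\ch^{[-1,0]}(\proj A)$ is literally equivalent to $f_3 \phi = f_3$ in $\proj A$ under this identification, so right minimality transfers verbatim. The only real care needed anywhere in the argument is the initial degree bookkeeping identifying which null-homotopies are available; once that is pinned down, there is no genuine obstacle and the rest is mechanical.
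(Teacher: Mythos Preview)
Your proof is correct. For part (a)(ii)$\Rightarrow$(i) and for part (b), you are doing essentially what the paper does: testing the weak kernel property against $A$ and $A[1]$ to read off exactness at $P_1$ and $P_2$, and observing that there are no nonzero homotopies out of $P_3[1]$ so that right minimality transfers (the paper phrases (b) via indecomposable direct summands rather than endomorphisms, but this is the same content).

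The genuine difference is in (a)(i)$\Rightarrow$(ii). You stay entirely inside $\ch^{[-1,0]}(\proj A)$ and build the factorisation by hand: projectivity of $Y^0$ plus exactness at $P_1$ produces $t$, then projectivity of $Y^{-1}$ plus exactness at $P_2$ produces $k^{-1}$. The paper instead passes to the ambient homotopy category $\ch(\proj A)$, completes $f'_1$ to a triangle $D\to C\to P_0\to D[1]$ there, factors $f'_3$ through $D$, and shows that the resulting map $P_3[1]\to D$ is a right $\ch^{[-1,0]}(\proj A)$-approximation by checking that its cone (the four-term complex $E$) has $\Hom_{\ch}(A,E)=\Hom_{\ch}(A[1],E)=0$, which is again exactness of the original sequence. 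Your route is shorter and more elementary; the paper's route has the mild conceptual advantage of explaining \emph{why} the weak kernel condition in $\ch^{[-1,0]}$ is governed by an honest triangle in $\ch$, but for the purposes of this lemma your direct computation is entirely adequate.
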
 

\begin{proof}
For simplicity, write $\ch:=\ch(\proj A)\supset\ct:=\ch^{[-1,0]}(\proj A)$.

(a)(ii)$\Rightarrow$(i) Applying $\Hom_{\ch}(A,-)$ to the sequence \eqref{3 term sequence}, we obtain an exact sequence
\[0=\Hom_{\ch}(A,P_3[1])\xrightarrow{f'_3}\overbrace{\Hom_{\ch}(A,C)}^{{\rm Cok} f_2=}\xrightarrow{f'_1}\Hom_{\ch}(A,P_0)=P_0\]
Thus the morphism ${\rm Cok} f_2\to P_0$ induced by $f_1$ is injective, and hence the sequence \eqref{4 term sequence} is exact at $P_1$. Similarly, applying $\Hom_{\ch}(A[1],-)$ to the sequence \eqref{3 term sequence}, we obtain an exact sequence
\[P_3=\Hom_{\ch}(A[1],P_3[1])\xrightarrow{f'_3}\overbrace{\Hom_{\ch}(A[1],C)}^{{\rm Ker} f_2=}\xrightarrow{f'_1}\Hom_{\ch}(A[1],P_0)=0\]
Thus the morphism $P_3\to {\rm Ker} f_2$ induced by $f_3$ is surjective, and hence the sequence \eqref{4 term sequence} is exact at $P_2$.

(i)$\Rightarrow$(ii) Take a triangle
\begin{equation}\label{DC triangle}
D\xrightarrow{g} C\xrightarrow{f'_1}P_0\to D[1]\ \ \rm{ in }\ \ch.
\end{equation}
Then $f'_3:P_3[1]\to C$ can be written as a composition
\[\xymatrix@R1.5em{
P_3[1]\ar[d]^{f''_3}&P_3\ar[r]\ar[d]^{f_3}&0\ar[r]\ar[d]&0\ar[d]\\
D\ar[d]^g&P_2\ar[r]^{f_2}\ar[d]^1&P_1\ar[r]^{f_1}\ar[d]^1&P_0\ar[d]\\
C&P_2\ar[r]^{f_2}&P_1\ar[r]&0}\]
By \eqref{DC triangle}, $g$ is a weak kernel of $f'_1$ in $\ch$. Thus it suffices to show that $f''_3:P_3[1]\to D$ is a right $\ct$-approximation of $D$. Let
\[E:={\rm Cone}(f''_3:P_3[1]\to D)=[P_3\xrightarrow{f_3} P_2\xrightarrow{f_2}P_1\xrightarrow{f_1} P_0],\]
where $P_0$ is in degree $1$. Then $H^i(E)=0$ holds for $i=0,-1$ since \eqref{4 term sequence} is exact, that is, $\Hom_{\ch}(A,E)=0=\Hom_{\ch}(A[1],E)$. Thus we obtain $\Hom_{\ch}(\ct,E)=0$. Since we have a triangle
$P_3[1]\xrightarrow{f''_3} D\to E\to P_3[2]$ in $\ch$, it follows that $f''_3:P_3[1]\to D$ is a right $\ct$-approximation of $D$, as desired.

%Since \eqref{ass of P} is a triangle in $\ch(\proj A)$, we have an exact sequence
%\[\Hom_{\ch(\proj A)}(-,P_3[1])\xrightarrow{f'_3}\Hom_{\ch(\proj A)}(-,C)\xrightarrow{f'_1}\Hom_{\ch(\proj A)}(-,P_1).\]
%Evaluating objects of $\ch$, it follows that $f'_3$ is a weak 
%For each $Q=(Q_1\to Q_0)\in\ch$, an easy diagram chase shows that $\Hom_{\ch}(Q,P_3)\xrightarrow{f_3}\Hom_{\ch}(Q,E)\xrightarrow{f_1}\Hom_{\ch}(Q,P_1)$ is exact.
%Thus $f_3:P_3\to E$ is a weak kernel of $f_1:E\to P_0$.
%Moreover, $f_1:E\to P_0$ is right minimal since $f_3:P_3\to E$ is in the radical of $\ch$.

(b) Consider the following conditions:
\begin{enumerate}
\item[\rm(i$'$)] For each non-zero direct summand $X$ of $P_3$, the composition $X\to P_3\xrightarrow{f'_3}P_2$ is non-zero.
\item[\rm(ii$'$)] For each non-zero direct summand $Y$ of $P_3[1]$, the composition $Y\to P_3[1]\xrightarrow{f'_3}C$ is non-zero.
\end{enumerate}
Then clearly we have (i)$\Leftrightarrow$(i$'$)$\Leftrightarrow$(ii$'$)$\Leftrightarrow$(ii).%Otherwise, there exists a non-zero direct summand $X$ of $P_3[1]$ such that the composition $X\to P_3[1]\xrightarrow{f'_3}C$ is zero. Write $X=Q[1]$ with $Q\in\proj A$. Then $Q$ is a non-zero direct summand of $P_3$ such that the composition $Q\to P_3\xrightarrow{f_3}P_2$ is zero. This is a contradiction since $f_3:P_3\to P_2$ is right minimal by our assumption.
\end{proof}

We also need the following observation.

\begin{lemma}\label{compare 3 and 4 1}
Let $A$ be a finite dimensional $k$-algebra, and let
\begin{equation*}%\label{4 term sequence 1}
P_2\xrightarrow{f_2}P_1\xrightarrow{f_1} P_0
\end{equation*}
be an exact sequence in $\mod A$ such that $P_i\in\proj A$ holds for each $i$, $P_0$ is indecomposable and $f_1$ is not surjective. Then the following conditions are equivalent.
\begin{enumerate}
\item[\rm(i)] ${\rm Im} f_1={\rm rad} P_0$ holds.
%(or equivalently, $f_1:P_1\to P_0$ is right almost split in $\proj A$).
\item[\rm(ii)] The morphism $f'_1:C:={\rm Cone}(P_2\xrightarrow{f_2}P_1)\to P_0$ given by
\begin{equation*}
\xymatrix@R1.5em{
P_2\ar[r]\ar[d]_{f_2}&0\ar[d]\\
P_1\ar[r]^{f_1}&P_0
}\end{equation*}
is right almost split in $\ch^{[-1,0]}(\proj A)$.
%\begin{equation}\label{3 term sequence 0}
%C\xrightarrow{f'_1} P_0\ \ {\rm in }\ \ch^{[-1,0]}(\proj A),
%\end{equation}
%where $C=\in\ct$ and $f'_1$ is given by
\end{enumerate}
\end{lemma}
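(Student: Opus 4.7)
The plan is to translate both conditions into statements about $A$-module maps, exploiting that morphisms to or from a stalk complex in $\ch^{[-1,0]}(\proj A)$ admit a clean description. First I would compute, for $Z = (Q^{-1} \xrightarrow{d} Q^0) \in \ch^{[-1,0]}(\proj A)$, that $\Hom_{\ch^{[-1,0]}(\proj A)}(Z, P_0) = \Hom_A(H^0(Z), P_0)$ and $\Hom_{\ch^{[-1,0]}(\proj A)}(P_0, Z) = \Hom_A(P_0, H^0(Z))$, with composition given by composition of module maps through $H^0(Z)$. The first identification uses that a chain map $Z \to P_0$ is a morphism $g^0 \colon Q^0 \to P_0$ satisfying $g^0 d = 0$; the second uses projectivity of $P_0$ to kill the relevant homotopy obstruction.

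From this I would deduce that a chain map $g \colon Z \to P_0$ is a split epimorphism in $\ch^{[-1,0]}(\proj A)$ if and only if the induced map $\bar g \colon H^0(Z) \to P_0$ is split epi as an $A$-module map, which since $P_0$ is projective and (by indecomposability) local is equivalent to $\im g^0 \not\subseteq \rad P_0$. In particular, $f'_1$ is automatically not split epi under the hypothesis that $f_1$ is not surjective, so what remains to prove is the factorisation property in (ii) precisely for morphisms $g$ with $\im g^0 \subseteq \rad P_0$.

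For (i)$\Rightarrow$(ii), assuming $\im f_1 = \rad P_0$, I would take any non-split-epimorphism $g = (0, g^0) \colon Z \to P_0$ and lift $g^0$ through $f_1$ using projectivity of $Q^0$ (legitimate because $\im g^0 \subseteq \rad P_0 = \im f_1$) to obtain $h^0 \colon Q^0 \to P_1$ with $f_1 h^0 = g^0$. The composition $h^0 d$ satisfies $f_1 h^0 d = g^0 d = 0$, so it lands in $\ker f_1 = \im f_2$ by the exactness hypothesis, and projectivity of $Q^{-1}$ yields $h^{-1} \colon Q^{-1} \to P_2$ with $f_2 h^{-1} = h^0 d$. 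The pair $(h^{-1}, h^0)$ is then a chain map $Z \to C$ factoring $g$ through $f'_1$.

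For (ii)$\Rightarrow$(i), I would test the factorisation property on the natural representative of the inclusion $\rad P_0 \hookrightarrow P_0$: take a projective cover $\pi \colon Q \twoheadrightarrow \rad P_0$, regard $\iota \pi \colon Q \to P_0$ as a chain map from the stalk $Z := (0 \to Q)$, which is not split epi, and deduce from (ii) that $\rad P_0 = \im(\iota \pi) \subseteq \im f_1$; the reverse inclusion is immediate since $P_0$ is local and $f_1$ is not surjective. No step here looks genuinely hard; the only mildly subtle point is the extension of $h^0$ to a full chain map $(h^{-1}, h^0)$, which is where the exactness of the given sequence at $P_1$ is essential — without it one would only factor $g$ through the truncated map $P_1 \to P_0$, not through $f'_1$ itself.
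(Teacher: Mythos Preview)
Your argument is correct. Both directions are clean: the characterisation of split epimorphisms to a stalk complex via the image landing outside $\rad P_0$ is right, the lifting in (i)$\Rightarrow$(ii) correctly uses exactness at $P_1$ to extend $h^0$ to a chain map, and the test object in (ii)$\Rightarrow$(i) pins down the reverse inclusion.

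The paper's proof is more categorical: it invokes Auslander's equivalence $H^0\colon \ch^{[-1,0]}(\proj A)/[A[1]]\simeq\mod A$ and a commutative square comparing $\Hom_{\ct}(-,C)\to(\rad\ct)(-,P_0)$ with $\Hom_A(H^0(-),\operatorname{Cok} f_2)\to(\rad(\mod A))(H^0(-),P_0)$, reducing (ii) directly to the assertion that $\operatorname{Cok} f_2\to P_0$ is right almost split in $\mod A$, which is (i). Your proof unwinds exactly the same identification $\Hom_{\ct}(Z,P_0)\cong\Hom_A(H^0(Z),P_0)$ by hand and then performs the factorisation explicitly via projective liftings, rather than appealing to the quotient functor. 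The upshot is that your version is self-contained and does not need the cited equivalence, at the cost of repeating a small piece of what that equivalence already packages; the paper's version is shorter but relies on the reader knowing the Auslander result.
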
 

\begin{proof}
%We prove that $f'_1: C\to P_0$ is right almost split in $\ct$.
For simplicity, write $\ct:=\ch^{[-1,0]}(\proj A)$. We have a commutative diagram
\begin{equation}\label{square}
\xymatrix{
\Hom_{\ct}(-,C)\ar[r]^{f'_1}\ar@{->>}[d]^{H^0}&({\rm rad}\ct)(-,P_0)\ar[d]^{H^0}_\wr\\
\Hom_A(H^0(-),{\rm Cok} f_2)\ar[r]&({\rm rad}(\mod A))(H^0(-),P_0),
}\end{equation}
where the left vertical map is surjective and the right one is bijective since the functor \eqref{H^0} is an equivalence and $\Hom_{\ct}(A[1],P_0)=0$ holds.

Thus (ii) holds if and only if the upper map in \eqref{square} is surjective if and only if the lower map in \eqref{square} is surjective if and only if the morphism ${\rm Cok} f_2\to P_0$ induced by $f_1$ is right almost split in $\mod A$.
The last condition is equivalent to (i) since the inclusion ${\rm rad} P_0\to P_0$ is a sink morphism of $P_0$ in $\mod A$.
 \end{proof}

Combining the previous observations, we obtain the following result.
%We consider almost split triangles ending at indecomposable projective $A$-modules.

\begin{proposition}\label{right almost split of P}
Assume that $\ct=\ch^{[-1,0]}(\proj A)$ has a structure of a triangulated category with suspension functor $\Sigma$.
For each non-projective simple $A$-module $S$, we take a minimal projective resolution 
\begin{equation*}
\cdots\to P_3\xrightarrow{f_3} P_2\xrightarrow{f_2}P_1\xrightarrow{f_1} P_0\to S\to0.
\end{equation*}
Then there exists a morphism $e:P_0\to\Sigma(P_3[1])$ such that
\begin{equation}\label{ass of P}
P_3[1]\xrightarrow{f'_3} C\xrightarrow{f'_1} P_0\xrightarrow{e}\Sigma(P_3[1])
\end{equation}
is an almost split triangle in $\ct$, where $C={\rm Cone}(P_2\xrightarrow{f_2}P_1)\in\ct$ and $f'_1$ and $f'_3$ are given by \eqref{f'_3 and f'_1}.
%\[\xymatrix@R1.5em{
%P_3\ar[r]^{f_3}\ar[d]&P_2\ar[r]\ar[d]^{f_2}&0\ar[d]\\
%0\ar[r]&P_1\ar[r]^{f_1}&P_0
%}\]
%In particular, $P_3$ is indecomposable.
\end{proposition}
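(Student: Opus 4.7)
The plan is to apply Lemma~\ref{tau sequence} with $X=P_0$, $Y=C$, $Z=P_3[1]$, $f=f'_1$ and $g=f'_3$. This reduces the problem to four checks: (i) $P_0$ is indecomposable in $\ct$; (ii) $({\rm rad}\,\ct)(-,P_0)\neq 0$; (iii) $f'_1$ is a sink morphism of $P_0$; and (iv) $f'_3$ is a right minimal weak kernel of $f'_1$. Conditions (i) and (ii) are immediate: $P_0$ is indecomposable as the projective cover of the simple $S$; and since $S$ is non-projective we have $P_0\neq S$, so ${\rm rad}(P_0)\neq 0$, and under the equivalence $H^0$ of \eqref{H^0} the morphism $f'_1$ corresponds to the nonzero, non-split inclusion $\iota\colon{\rm rad}(P_0)\hookrightarrow P_0$, which places $f'_1$ in $({\rm rad}\,\ct)(C,P_0)\setminus\{0\}$.

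Condition (iv) follows directly from Lemmas~\ref{compare 3 and 4}(a) and (b): exactness of the four-term piece $P_3\xrightarrow{f_3}P_2\xrightarrow{f_2}P_1\xrightarrow{f_1}P_0$ of the minimal projective resolution gives that $f'_3$ is a weak kernel of $f'_1$, and right minimality of $f_3$ in $\proj A$ (inherited from the minimal projective resolution, where $f_3$ factors through the projective cover $P_3\twoheadrightarrow\ker f_2$) transfers to right minimality of $f'_3$. For condition (iii), the right almost split half is supplied by Lemma~\ref{compare 3 and 4 1} applied to the exact sequence $P_2\xrightarrow{f_2}P_1\xrightarrow{f_1}P_0$: $P_0$ is indecomposable, $f_1$ is not surjective (since $S\neq 0$), and ${\rm Im}(f_1)={\rm rad}(P_0)$.

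The main obstacle I anticipate is right minimality of $f'_1$. My plan here is to use that $\ct$ is Hom-finite over $k$ and hence Krull--Schmidt. Given $h\in\End_\ct(C)$ with $f'_1\circ h=f'_1$, applying $H^0$ yields $\iota\circ H^0(h)=\iota$, and injectivity of $\iota$ forces $H^0(h)=\id_{H^0(C)}$, so $h-\id_C$ lies in the ideal $[A[1]](C,C)$ of morphisms factoring through $\add(A[1])$. It then suffices to show that $[A[1]](C,C)$ is contained in the Jacobson radical of $\End_\ct(C)$, for then $h$ is a unit and hence an isomorphism. Since $[A[1]](C,C)$ is a two-sided ideal, this reduces by Krull--Schmidt to showing that no indecomposable summand of $C$ lies in $\add(A[1])$: any such summand would make its identity factor through a power of $A[1]$, placing it in $\add(A[1])$. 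A summand of the form $A[1]^m=(A^m\to 0)$ in $C=(P_2\xrightarrow{d_2}P_1)$ would give a decomposition $P_2=P'_2\oplus A^m$ with $d_2=(d'_2,0)$; this contradicts right minimality of $d_2$, witnessed by the non-isomorphism $\id_{P'_2}\oplus 0$ which fixes $d_2$. With all four conditions verified, Lemma~\ref{tau sequence} produces the desired morphism $e\colon P_0\to\Sigma(P_3[1])$ completing \eqref{ass of P} to an almost split triangle.
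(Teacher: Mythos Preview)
Your argument is correct and follows exactly the paper's route: verify the hypotheses of Lemma~\ref{tau sequence} via Lemmas~\ref{compare 3 and 4} and~\ref{compare 3 and 4 1}, together with $({\rm rad}\,\ct)(-,P_0)\neq 0$. You are in fact more thorough than the paper, which invokes these lemmas without ever checking that $f'_1$ is right \emph{minimal} (Lemma~\ref{compare 3 and 4 1} only yields right almost split); your argument via $H^0$ and the absence of $\add(A[1])$-summands in $C$ supplies this missing verification.
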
 

\begin{proof}
By Lemma \ref{compare 3 and 4 1}, $f'_1:C\to P_0$ is right almost split. By Lemma \ref{compare 3 and 4}(a)(b), $f'_3:P_3[1]\to C$ is a right minimal weak kernel of $f'_1$.
On the other hand, since $S$ is non-projective, $({\rm rad}\ct)(-,P_0)\neq0$ holds.
By Lemma \ref{tau sequence}(i)$\Rightarrow$(ii), the assertion follows.
\end{proof}

Now we are ready to prove Theorem \ref{necessary}.

\begin{proof}[Proof of Theorem \ref{necessary}]
Assume that $\ch^{[-1,0]}(\proj A)$ has a structure of a triangulated category with suspension functor $\Sigma$.

(i) For the opposite algebra $A\op$ of $A$, the duality $(-)^*:=\Hom_A(-,A):\proj A\simeq\proj A\op$ gives a duality
\begin{equation}\label{2 duality}
\Hom_A(-,A[1]):\ch^{[-1,0]}(\proj A)\simeq\ch^{[-1,0]}(\proj A\op).
\end{equation}
In particular,  $\ch^{[-1,0]}(\proj A\op)$ also has a structure of a triangulated category.

(ii) We show that any simple non-projective $A$-module $S$ satisfies $\Ext^i_A(S,A)=0$ for $i=1,2$ and ${\rm Tr}\Omega^2(S)$ is a simple $A\op$-module.
%\begin{enumerate}
%\item[\rm(a)] $S$ is a projective $A$-module.
%\item[\rm(b)] $\Ext^1_A(S,A)
%\end{enumerate}

By Proposition \ref{right almost split of P}, we have an almost split triangle \eqref{ass of P} in $\ch^{[-1,0]}(\proj A)$. In particular, $P_3$ is indecomposable.
Since the notion of almost split triangles is self-dual, by applying the duality \eqref{2 duality}, we obtain an almost split triangle
\begin{equation}\label{ass of P 2}
P_0^*[1]\xrightarrow{f^*_1{}'} C'\xrightarrow{f^*_3{}'} P_3^*\to\Sigma(P_0^*[1])
\end{equation}
in $\ch^{[-1,0]}(\proj A\op)$, where $C':=(P_1^*\xrightarrow{f_2^*}P_2^*)$.
Applying Lemma \ref{compare 3 and 4}(a)(ii)$\Rightarrow$(i) to the algebra $A^*$ and the sequence \eqref{ass of P 2}, it follows that the sequence
\[P_0^*\xrightarrow{f_1^*} P_1^*\xrightarrow{f_2^*} P_2^*\xrightarrow{f_3^*} P_3^*\]
is exact. Thus $\Ext^i_A(S,A)=0$ holds for $i=1,2$.
Applying Lemma \ref{compare 3 and 4 1}(a)(ii)$\Rightarrow$(i) to the sequence \eqref{ass of P 2}, we obtain ${\rm Im} f_3^*={\rm rad} P_3^*$. Since $P_3^*$ is indecomposable, ${\rm Tr}\Omega^2(S)={\rm Cok} f_3^*={\rm top} P_3^*$ is a simple $A\op$-module.
Thus the assertions follow.

(iii) We prove (a) and (b) in Theorem \ref{necessary}.

First, we prove (a), that is, $A$ is self-injective. It suffices to show $\Ext^1_A(S,A)=0$ for each simple $A$-module $S$. If $S$ is projective, then this is clear. Otherwise, this follows from (ii). Thus the assertion follows.

Next, we prove (b). Let $S$ be a simple non-projective $A$-module. By (ii), ${\rm Tr}\Omega^2S$ is a simple $A\op$-module. Since $A$ is self-injective, $(-)^*=\Hom_A(-,A):\mod A\op\to\mod A$ is an exact duality, and hence $({\rm Tr}\Omega^2(S))^*$ is a simple $A$-module.
Since $A$ is self-injective, we have $\Omega^4=({\rm Tr}\Omega^2-)^*$. Thus $\Omega^4S$ is a simple $A$-module.

(iv) We prove (c) and (d) in Theorem \ref{necessary}.

By (b) and the separability of the $k$-algebra $A/J_A$, we obtain (c) by \cite[Theorem 1.4]{GreenSnashallSolberg03} and \cite[Corollary 2.2]{H} (see also \cite[Proposition 3.5]{ChanDarpoeIyamaMarczinzik25}).
By (c) and \cite[Theorem 1.3]{Lin19}, we obtain (d).
\end{proof}

Finally, we prove Theorem \ref{3 and 4}.

\begin{proof}[Proof of Theorem \ref{3 and 4}]
(ii)$\Leftrightarrow$(iii) follows from \cite[Proposition 3.6]{IyamaOppermann13}.

(iv)$\Rightarrow$(ii) follows from \cite[Proposition 2.2.9]{JassoKellerMuro}.

(ii)$\Rightarrow$(i) follows from Corollary \ref{cor:self-injective-cy-tilted-algebra}.

(i)$\Rightarrow$(iv) follows from Theorem \ref{necessary}(d).

Alternatively, (iii)$\Rightarrow$(iv) follows from \cite[Theorem 1]{GeissKellerOppermann13}
\end{proof}

\def\cprime{$'$}
\providecommand{\bysame}{\leavevmode\hbox to3em{\hrulefill}\thinspace}
\providecommand{\MR}{\relax\ifhmode\unskip\space\fi MR }
% \MRhref is called by the amsart/book/proc definition of \MR.
\providecommand{\MRhref}[2]{%
  \href{http://www.ams.org/mathscinet-getitem?mr=#1}{#2}
}
\providecommand{\href}[2]{#2}

\end{document}